\documentclass[12pt]{amsart}
\usepackage{fullpage,tikz,scalerel}
\usepackage[utf8]{inputenc}
\usepackage{color,amsfonts,amsmath,amssymb,amsthm,cases,wasysym}
\usepackage[normalem]{ulem}
\usepackage{svg}
\usepackage{enumitem}
\usepackage{booktabs}
\usepackage{mathtools}
\usepackage{hyperref} 
\usepackage{float}

\title{Framing Triangulations for Arbitrary Integer Flow Polytopes}
\author{Jonah Berggren}
\email{jrberggren@uky.edu}
\date{}

\newtheorem{thm}{Theorem}[section]
\newtheorem{prop}[thm]{Proposition}
\newtheorem{lemma}[thm]{Lemma}
\newtheorem{cor}[thm]{Corollary}
\theoremstyle{definition}
\newtheorem{conj}[thm]{Conjecture}
\newtheorem{defn}[thm]{Definition}
\newtheorem{remk}[thm]{Remark}
\newtheorem{example}[thm]{Example}

\newtheorem{thmIntro}{Theorem}    
\newtheorem{propIntro}[thmIntro]{Proposition}

\newcommand{\inn}{\textup{in}}
\newcommand{\out}{\textup{out}}
\newcommand{\hinn}{\widehat{\textup{in}}}
\newcommand{\hout}{\widehat{\textup{out}}}

\newcommand{\F}{\mathcal F}	%flow polytope
\newcommand{\FF}{\mathfrak F}	%framing
\newcommand{\R}{\mathcal R}	%route-clique
\newcommand{\T}{\mathcal T}	%route-clique
\newcommand{\K}{\mathcal K}	%layering-simplex
\renewcommand{\L}{\mathcal L}	%layering-simplex
\newcommand{\M}{\mathcal M}	%layering-simplex
\renewcommand{\a}{\mathbf{a}}	%netflow vector
\newcommand{\ucong}{\overset{\textup{int}}{\cong}}
\newcommand{\I}{\mathcal{I}} %indicator vector of route
\newcommand{\J}{\mathcal{J}} %indicator vector of layering
\newcommand{\src}{\textup{src}} %post-source order
\newcommand{\routes}{\textup{routes}} %routes of layering-simplex
\newcommand{\D}{\Delta}		%polyhedral clique-simplex or layering-simplex
\newcommand{\rat}{\textup{rat}}	%rational
\renewcommand{\int}{\textup{int}}	%integer
\newcommand{\fs}{\textup{fs}}	%flow-supporting
\newcommand{\dkk}{\textup{DKK}}	%DKK
\newcommand{\con}{\textup{con}} %conservationist
\newcommand{\s}{s}	%source vertex
\newcommand{\G}{\Gamma}	%planar embedding

\begin{document}

\maketitle

\begin{abstract}
	Framing triangulations of unit flow polytopes have received a great deal of recent study with rich connections to various generalizations of Catalan and Cambrian combinatorics as well as volume and $h^*$-polynomial formulas.
	This story has largely been restricted to unit flow polytopes, with only two recent works giving descriptions of framing triangulations on classes of non-unit flow polytopes.
	In this article we introduce the first theory of (unimodular) framing triangulations for arbitrary integer flow polytopes.
	We will observe some pathologies in general examples which are impossible in the unit case, and propose in response a class of ``well-ordered'' framing triangulations which we expect to inherit key properties from the unit case while still containing all settings of framing triangulations existing in the literature.
\end{abstract}

\section{Introduction}

Flow polytopes, which model the space of \emph{flows} on a directed acyclic graph (DAG) $G$ with a fixed netflow vector $\a$ on its vertices, are a fundamental class of polytopes in combinatorial optimization~\cite{FRW,MM,RW,Zang} with connections to a plethora of areas of mathematics including 
Grothendieck polynomials~\cite{LMD}, algebraic geometry~\cite{EM,Hille}, and representation theory~\cite{BV}.

Many interesting polytopes arise as flow polytopes, such as Gelfand-Tsetlin polytopes \cite{LMD}, the Chan–Robbins–Yuen polytope~\cite{CRY}, Tesler polytopes~\cite{MMR}, and the Pitman–Stanley polytope~\cite{PS}.
\emph{Unit} flow polytopes, or flow polytopes defined from a \emph{unit} netflow vector $\a_1=(1,0,\dots,0,-1)$, have received a wealth of recent study due to the theory of framing triangulations.
Danilov, Karzanov, and Koshevoy~\cite{DKK} used the data of a \emph{framing} $\FF$ on a DAG $G$ to induce a regular unimodular \emph{framing triangulation} on the unit flow polytope $\F_G(\a_1)$.
The dual graph of a framing triangulation was given a lattice structure, called the \emph{framing lattice}, by von Bell and Ceballos~\cite{vBC}.
The combinatorics of framing triangulations and framing lattices recovers through special cases many classically studied areas of combinatorics.
Notably, many important families of lattices may be realized as framing lattices~\cite{vBC}, such as the Tamari lattice, the weak order on the symmetric group, and various generalizations of these lattices (including the type-A Cambrian lattices~\cite{ReadingCamb}, the $\nu$-Tamari lattices~\cite{PV,BDMY} and the principal order ideals in Young's lattice~\cite{BDMY}, the $s$-weak orders~\cite{DMPTY}, the permutree lattices~\cite{PP}, and the $\tau$-tilting posets of certain gentle algebras~\cite{WIWT,GENT,UQAM,PPP,BDMTY}).
Finally, the enumeration and structure of the maximal simplices of a framing triangulation open connections to the volume and $h^*$-polynomials~\cite{WIWT} of the underlying flow polytope.
{In particular, a concurrent article by Gonz\'alez D'Le\'on, Hanusa, and Yip~\cite{DHY} relates the combinatorics of framing triangulations to the Lidskii volume formulas of Baldoni and Vergne~\cite{BV}.}

The study of framing triangulations has thus far been limited to unit flow polytopes, with two exceptions.
The authors of the concurrent article~\cite{DHY} work in the generality of flow polytopes from a netflow vector $\a$ with a unique negative element, which generalizes the unit case. Among other things, they give a generalization of Danilov, Karzanov, and Koshevoy's framing triangulations to this setting and connect them with the Lidskii volume formulas of Baldoni and Vergne~\cite{BV} which hold for this family of flow polytopes.
In~\cite{PLAN}, the author developed a theory of framing triangulations given a \emph{strongly planar} embedding of a graph $G$ with a nontrivial netflow vector $\a$ (which may have multiple positive and multiple negative entries), generalizing the strongly planar framing triangulations studied by M\'esz\'aros, Morales, and Striker~\cite{MMS}.

The main contribution of this article is to develop a theory of framing triangulations for the flow polytope of a general DAG $G$ and arbitrary integer netflow vector $\a$ which simultaneously generalizes all of these existing theories.
{We will observe some pathologies that may arise in this general setting but not in the settings of~\cite{DKK,DHY,PLAN}; in response to this, we will propose a more specific class of \emph{well-ordered} framing triangulations which still serves as a common generalization of these settings.
We show that, unlike the general case, a well-ordered framing triangulation is always induced by a flag complex (i.e., a pairwise compatibility condition on integer points). We expect but do not prove that well-ordered framing triangulations preserve many important properties from the settings of~\cite{DKK,DHY,PLAN} -- e.g., a theory of framing posets, and connections to volume and $h^*$-polynomial formulas.}

\subsection{Results}

Let $G=(V,E)$ be a DAG with integer netflow vector $\a$.
We define an \emph{augmentation} $(\hat G,\hat\a)$ of $(G,\a)$ (Definition~\ref{defn:cover}) by adding \emph{inflow edges/vertices} and \emph{outflow edges/vertices} such that $\hat\a_i$ is 1 on the inflow vertices, negative on the outflow vertices, and 0 on the original vertices $V$, and such that contracting all inflow and outflow edges retrieves $(G,\a)$.
For example, the bottom-left of Figure~\ref{fig:intro} (netflow vector labelled in blue) is an augmentation of the top-left.
A \emph{framing} $\hat\FF$ on an augmentation $(\hat G,\hat\a)$ is a set of total orders of the incoming and outgoing edges to each internal vertex, along with a total order on the inflow vertices. We denote framings in examples using our embeddings of DAGs, with lower edges below higher edges in all framing orders and with inflow vertices ordered bottom to top.

\begin{figure}
	\centering
	\def\svgscale{.38}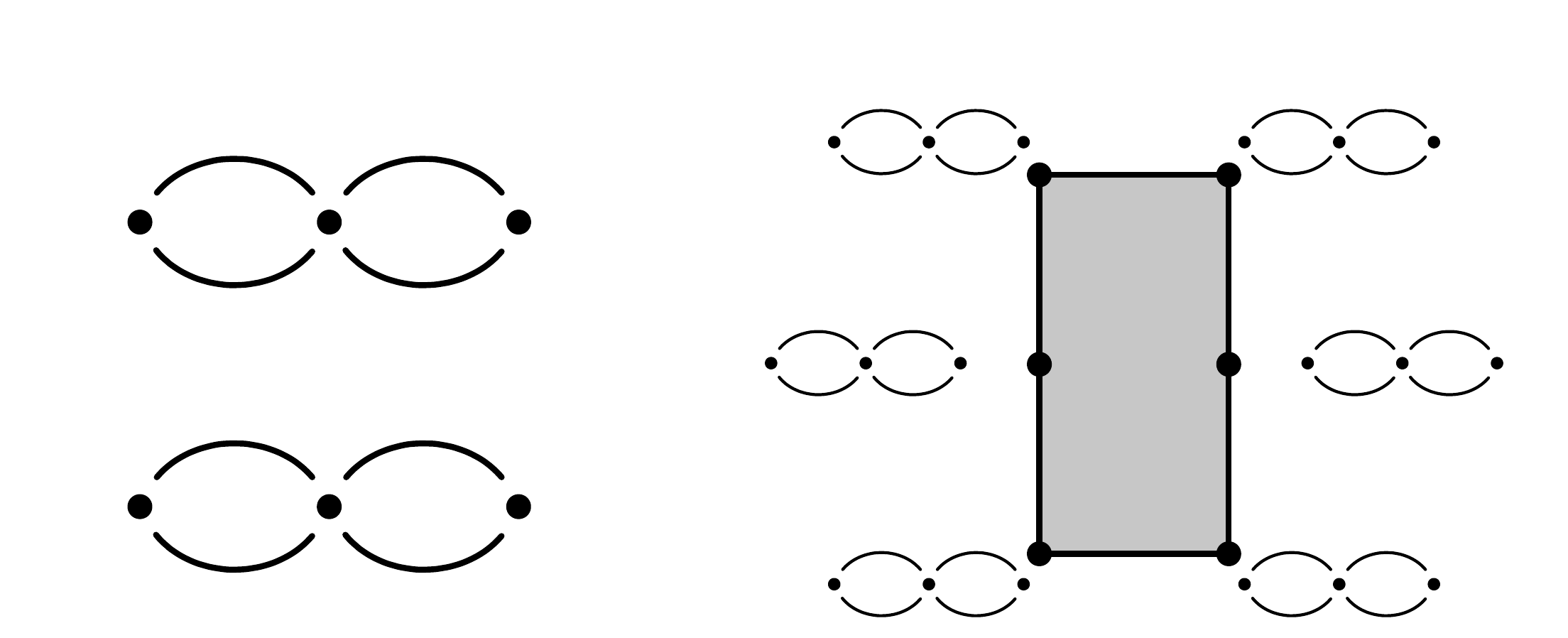
	\caption{A DAG $G$ with netflow vector $\a$ labelled in blue (top-left), a framed augmentation $(\hat G,\hat\a,\hat\FF)$ (bottom-left), and its framing-triangulated flow polytope (right).}
	\label{fig:intro}
\end{figure}

Analogous to the unit case of Danilov, Karzanov, and Koshevoy, a framing of an augmentation induces a notion of pairwise compatibility of routes (paths from source to sink). A \emph{route-clique} is a set of pairwise compatible routes. 
A \emph{layering} is a route-clique with one route starting at each inflow vertex $i$ and $|\hat\a_j|$ routes ending with each outflow vertex $j$. Taking the \emph{indicator vector} of a layering in $\mathbb R^E$ gives a bijection from layerings to integer points of the flow polytope $\F_G(\a)$.
In Figure~\ref{fig:intro}, the six integer points of the flow polytope on the right are labelled by the six layerings.

The framing $\hat\FF$ induces a total order $<_i^+$ on the routes beginning from any inflow vertex; applying lexicographical order then gives a total order $<_\src^+$ on layerings of $(\hat G,\hat\a,\hat\FF)$.
	A set $\L=\{L^1,\dots,L^m\}$ of layerings of $(\hat G,\hat\a,\hat\FF)$ indexed so that $L^1<_\src^+\dots<_\src^+L^m$ is a \emph{layering-simplex} of $(G,\a,\FF)$ if
		the set $\routes(\L)$ of routes used in $\L$ is a route-clique, and
		for any $i\in[m]$ we have 
			\[L^i=\min_{<_\src^+}\left\{L\in\text{Layerings}(\hat G,\hat\a,\hat\FF)\ :\ \routes(\{L\})\subseteq\routes(\{L^{i},\dots,L^m\})\right\}\textup{.}\]

In fact, the layering-simplices form an abstract simplicial complex on the set of layerings of $(\hat G,\hat\a,\hat\FF)$. 
{Note that unlike the framing triangulations of~\cite{DKK,DHY,PLAN} our notion of layering-simplex is not defined from a pairwise compatibility condition on the integer points of the flow polytope $\F_G(\a)$, and indeed it may fail to be a flag complex.}
We prove that letting $\check\D_1(\L)$ be the convex hull of the indicator vectors of the layerings of a layering-simplex $\L$ induces a unimodular triangulation of $\F_G(\a)$:

\begin{thmIntro}[{Theorem~\ref{thm:main-aug}}]
	\label{thm:i}
	Let $G$ be a DAG with netflow vector $\a$ and let $(\hat G,\hat\a,\hat\FF)$ be a framed augmentation of $(G,\a)$. The \emph{framing triangulation}
	\[
		\mathcal T(\hat G,\hat\a,\hat\FF):=\{\check\D_1(\L)\ :\ \L\in\textup{Layerings}(\hat G,\hat\a,\hat\FF)\}
	\]
	is a unimodular lattice triangulation of $\F_G(\a)$.
\end{thmIntro}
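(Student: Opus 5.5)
Our plan is to reduce to the unit case of Danilov, Karzanov, and Koshevoy via the augmentation. The key object is the unit-style flow polytope on $\hat G$ where each inflow vertex sends one unit. We consider the product structure: a flow on $(\hat G,\hat\a)$ decomposes into flows from each inflow vertex $i$. The polytope $P=\F_{\hat G}(\hat\a)$ projects, by contracting inflow and outflow edges, integrally and isomorphically onto $\F_G(\a)$. This holds because the inflow and outflow edge values are forced by $\hat\a$ and the conservation constraints. So it suffices to triangulate $P$, and unimodularity transfers.

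\textbf{Step 1 (Cayley/product lift).} Let $\hat G_i$ be the unit graph obtained by restricting to routes starting at inflow vertex $i$. The framing $\hat\FF$ restricts to a framing on each $\hat G_i$. Consider the polytope $Q=\prod_i \F_{\hat G_i}(\a_1)$, or better its Minkowski-sum analogue. $P$ is the image of $Q$ under the summation map $\pi\colon(x_i)_i\mapsto\sum_i x_i$. Each layering is the sum of one route per inflow vertex, with pairwise compatibility.

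\textbf{Step 2 (Covering).} For every point $x\in P$, we produce a layering-simplex whose hull contains $x$. Mimicking DKK, we decompose $x$ greedily. Take the $<_i^+$-minimal available route from the smallest inflow vertex, peel off as much flow as possible, and repeat. This yields a route-clique $R$ together with a chain of layerings, each obtained from the previous by swapping routes. We define $\L$ as the set of lexicographically minimal layerings supported on suffixes of this chain. These satisfy the minimality condition by construction, and $x$ is a convex combination of their indicator vectors.

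\textbf{Step 3 (Unimodularity and intersection).} Indicator vectors of a layering-simplex are affinely independent. The minimality condition means each $L^i$ uses a route absent from all $L^j$ with $j>i$. This gives a triangular structure with unit pivots in the $\{0,1\}$ route coordinates. Since the routes in $\routes(\L)$ are compatible, the corresponding clique flows form a unimodular DKK-type simplex in route-coordinates, and its lattice is preserved.

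For proper intersection, we show uniqueness of the representation. Given $x$ in the relative interior of $\check\D_1(\L)$, the clique $\routes(\L)$ is determined by $x$ through the unique DKK compatible decomposition of the flow, applied to the lifted unit problem with one unit per inflow vertex. The coefficients, and hence $\L$, are then forced by the minimality rule. Combining this with Step 2 and a volume count, or a regular height function à la DKK, gives a triangulation.

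The main obstacle is Step 3's uniqueness. In general, distinct inflow vertices share routes' suffixes, so the lift of $x$ to per-inflow flows is not unique. We would first try to fix a canonical lift using the framing order on inflow vertices, assigning lower inflow vertices the $<^+$-lowest available flow. We would then prove that compatibility of $\routes(\L)$ forces every layering-simplex containing $x$ to induce this canonical lift. Failing that, we would construct an explicit regular height function on layerings that is lexicographic in $<_\src^+$.
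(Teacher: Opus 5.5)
The decisive gap is in your covering step. Choosing the $<_i^+$-minimal available route out of each inflow vertex, as you describe, need not give a layering. Nothing forces the chosen routes to end at each outflow vertex $j$ exactly $|\hat\a_j|$ times. Once that fails, subtracting their indicator vectors leaves a vector that is no longer a multiple of an $\hat\a$-flow, and the recursion cannot continue. For instance, in Example~\ref{ex:k33} take the flow equal to $1/3$ on every edge: every route is in the support, and the minimal route out of every source ends at $t_1$.

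The missing idea is that the support $\R$ of the positive route-clique decomposition of any $x\times\hat\a$-flow always contains \emph{some} layering. This is Proposition~\ref{prop:tech}, and it needs a real argument: pass to a rational and then an integer flow supported on routes of $\R$, and extract a perfect matching (via Hall's theorem) from a regular bipartite graph between inflow vertices and copies of outflow vertices. With that in hand, the correct greedy step is:
\begin{enumerate}
\item take the $<_{\src}^{+}$-minimal layering using only routes of the current support;
\item subtract the smallest coefficient among its routes;
\item discard the exhausted routes, and repeat.
\end{enumerate}
Condition~(2) in the definition of a layering-simplex is then literally this selection rule, so the output is a layering-simplex realizing the flow. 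Your ``lexicographically minimal layerings supported on suffixes of the chain'' gestures at this, but it guarantees neither that such layerings exist nor that their coefficients reproduce the point.

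The uniqueness obstacle you flag, by contrast, disappears once you stop lifting to per-inflow flows. Identify all sources of (the flow-supporting part of) $\hat G$ into one vertex and all sinks into another. Routes and compatibility are unchanged, an $x\times\hat\a$-flow becomes a flow of strength $xS_{\a}$, and Theorem~\ref{thm:DKK} gives a unique positive route-clique decomposition of the whole flow at once. The split by inflow vertex is then automatic, since each route has a definite first edge. The same identification is what justifies the DKK-type unimodularity you invoke in Step~3, whose triangularity argument is otherwise fine.

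Now any positive layering-simplex combination $\sum_{M\in\M}a_M\J(M)$ regroups into a positive route-clique combination supported on $\routes(\M)$, so it is \emph{the} DKK decomposition. Condition~(2) makes $M^1$ the greedy choice, and Lemma~\ref{lem:route-deletion} makes $a_{M^1}$ the smallest route coefficient on $M^1$. Induction then recovers $\M$ and its coefficients. So no canonical lift, volume count, or height function is needed. The last two are not available anyway: the volume is not known a priori, and regularity is not established here.

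Finally, Step~1 is unused and also wrong as stated when there are several outflow vertices. Summing one unit flow from each inflow vertex does not control how much flow reaches each vertex of $V_Y$, so $P$ is in general a proper slice of $\pi(Q)$ rather than its image.
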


The proof of Theorem~\ref{thm:i} is ultimately an algorithm which, given $x>0$ and an $x\times\a$-flow $F$ of $G$, returns $F$ as a convex combination of indicator vectors of layerings within the same layering-simplex.

We will see an example (Example~\ref{ex:k33}) of a framing triangulation whose layering-simplex complex is not a flag complex (i.e., not induced from a pairwise compatibility condition on layerings) and whose dual graph is not the Hasse diagram of a poset -- two pathologies which may not arise in the unit case.
We say that a framed augmentation $(\hat G,\hat\a,\hat\FF)$ (or its framing triangulation) is \emph{well-ordered} if every layering gives the same map from inflow vertices to outflow vertices; note that the framed augmentation of Figure~\ref{fig:intro} is well-ordered because there is a unique outflow vertex.
{Moreover, the settings of framing triangulations discussed earlier in the introduction~\cite{DKK,DHY,PLAN} may all be described as well-ordered framing triangulations.}

\begin{propIntro}[{Proposition~\ref{prop:ltr}}]
	\label{prop:ltrI}
	Suppose $(\hat G,\hat\a,\hat\FF)$ is well-ordered and $\L$ be a set of layerings. Then $\L$ is a layering-simplex if and only if the layerings of $\L$ are \emph{pairwise noncrossing}: i.e., if for any choice of $L,M\in\L$, the set $\routes(\{L,M\})$ is a route-clique and without loss of generality $p<_i^+q$ for every $p\in L$ and $q\in M$ starting at the same inflow vertex $i$.
\end{propIntro}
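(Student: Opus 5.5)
We prove both directions directly from the definition of layering-simplex. The basic tool is the following observation about well-ordered framings. Fix a route-clique $R$ and an inflow vertex $i$. The routes of $R$ starting at $i$ are totally ordered by $<_i^+$. Since all layerings induce the same map from inflow vertices to outflow vertices, a layering $L$ with $\routes(\{L\})\subseteq R$ is determined by choosing one route of $R$ from each inflow vertex $i$. These choices are subject to pairwise compatibility and to the outflow counts, and the outflow counts are automatic because the target outflow vertex of each $i$ is fixed.

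We first establish a lemma (compatibility interpolation). In a route-clique, routes are pairwise compatible by hypothesis. Any choice of one route per inflow vertex, taken from $R$, is therefore a route-clique. Its indicator vector satisfies conservation at internal vertices, because each route does, and it has the right netflow at each outflow vertex by well-orderedness. So every such choice is a layering. The minimum $\min_{<_\src^+}\{L:\routes(\{L\})\subseteq R\}$ is then the layering that picks, at each $i$, the $<_i^+$-minimal route of $R$ from $i$. The reason is that lexicographic order is minimized coordinatewise when the coordinates can be chosen independently.

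\emph{($\Leftarrow$)} Suppose the layerings of $\L$ are pairwise noncrossing. Then $\routes(\L)$ is a route-clique, since compatibility is a pairwise condition on routes and any two routes lie in some $\routes(\{L,M\})$. The noncrossing condition makes the relation ``$p\le_i^+ q$ for all $p\in L$, $q\in M$ at every inflow vertex $i$'' a total order on $\L$. This order must agree with $<_\src^+$. For distinct $L,M$ in $\L$ there is some $i$ where their routes differ, and the noncrossing relation then fixes the lexicographic comparison. Indexing $L^1<_\src^+\dots<_\src^+L^m$, we get that for each inflow vertex $i$, the route of $L^k$ at $i$ is $\le_i^+$ the route of $L^j$ at $i$ for all $j\ge k$. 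By the lemma, the minimum over layerings supported on $\routes(\{L^k,\dots,L^m\})$ picks coordinatewise minima, and these minima are exactly the routes of $L^k$. Hence the displayed condition holds.

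\emph{($\Rightarrow$)} Suppose $\L$ is a layering-simplex. The route-clique condition on pairs follows immediately. For noncrossing, apply the displayed condition together with the lemma: for each $k$, the route of $L^k$ at each $i$ is the $<_i^+$-minimum of the routes of $L^k,\dots,L^m$ at $i$. Now take $j<k$. The route of $L^j$ at $i$ is $\le_i^+$ the route of $L^k$ at $i$, for every $i$. So $L^j,L^k$ are noncrossing, with the convention that a route shared by both is allowed; this is the reading of ``$p<_i^+q$'' for $p\in L$, $q\in M$ that the statement intends.

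The main obstacle is the lemma, specifically the claim that an arbitrary choice of one route per inflow vertex from a route-clique is a genuine layering. This requires (a) that the definition of layering needs only pairwise compatibility plus correct endpoint counts, and (b) that well-orderedness forces the endpoint counts. Point (b) is where well-orderedness is essential: without it, swapping routes could change how many routes end at each outflow vertex. We would verify this carefully using the definitions of augmentation and layering from Section~2, including the fact that each inflow vertex has netflow $1$.
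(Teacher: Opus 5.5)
Your proof is correct and is essentially the paper's. The ($\Leftarrow$) direction is the same coordinatewise-minimum argument. Your product lemma is a global version of the paper's single-swap contradiction in ($\Rightarrow$): the paper replaces $L^i_j$ by $L^{i'}_j$, which is still a layering by well-orderedness, and this gives a $<_\src^+$-smaller layering supported on $\routes(\{L^i,\dots,L^m\})$.

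One small correction: state your lemma only for $R=\routes(\{L^k,\dots,L^m\})$, or more generally for route-cliques each of whose routes lies in some layering. Well-orderedness fixes the endpoint only of routes that occur in layerings, not of arbitrary routes from $s_i$ in a route-clique. In that restricted form the lemma holds, and it is all you actually use.
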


{We conjecture that the dual graph of a well-ordered framing triangulation is the Hasse diagram of a poset, generalizing the framing lattices of von Bell and Ceballos~\cite{vBC} in the unit case. Moreover, we expect that some of the theory developed in~\cite{DHY} may generalize to the well-ordered setting. In particular, we hope that it may lead to a generalization of the Lidskii volume formulas of Baldoni and Vergne to the class of DAGs with netflow vectors admitting a well-ordered framed augmentation.}

The structure of this article is as follows. Section~\ref{sec:background} will give the relevant background on flow polytopes and on framing triangulations in the unit case as developed by Danilov, Karzanov, and Koshevoy~\cite{DKK}.
Section~\ref{sec:cons} contains the mathematical backbone of the article, proving a framing triangulation result for DAGs with \emph{conservationist} netflow vectors (of which augmentations are an example).
In Section~\ref{sec:aug} we will use this to get framing triangulations of an arbitrary integer flow polytope via framed augmentations.
In Section~\ref{sec:well-ordered} we will introduce and study well-ordered framing triangulations.
{Finally, in Section~\ref{sec:special-cases} we will show that the existing settings of framing triangulations~\cite{DKK,DHY,PLAN} all fall under the umbrella of our well-ordered framing triangulations.}

\subsubsection*{Acknowledgments}

The author would like to thank the anonymous referees of~\cite{PLAN}, who pointed out that a pathological flow polytope example (Example~\ref{ex:k33}) may still be triangulated if one drops the requirement for a pairwise compatibility condition, motivating the main result of this paper.
The author was supported by the NSF grant DMS-2451909. This work was supported by a grant from the Simons Foundation International [SFI-MPS-TSM-00013650, KS].

\section{Background}
\label{sec:background}

In this section we give some basic background on triangulations and integral equivalences of integer polytopes. Then we will define DAGs, netflow vectors, and the resulting flow polytopes. We will recall the framing triangulations of unit flow polytopes induced by framings on DAGs with one source and one sink developed by Danilov, Karzanov, and Koshevoy~\cite{DKK}. {We will leave reference of other settings of framing triangulations~\cite{DHY,PLAN} to Section~\ref{sec:special-cases}.}
This background section will largely follow~\cite{PLAN}.

\subsection{Integer polytope basics}

An \emph{integer polytope} is a polytope in $\mathbb R^n$ all of whose vertices are in $\mathbb Z^n$.
Following~\cite[\S2]{MMS} we say that two integer polytopes $P\subseteq\mathbb R^n$ and $Q\subseteq\mathbb R^m$ are \emph{integrally equivalent} if there is an affine transformation $\phi:\mathbb R^n\to\mathbb R^m$ whose restriction to $P$ is a bijection from $P$ to $Q$ that preserves the lattice. In other words, such that $\phi$ restricts to a bijection between $\mathbb Z^n\cap\text{aff}(P)$ and $\mathbb Z^m\cap\text{aff}(Q)$ where $\text{aff}(-)$ denotes affine span. We also say that the map $\phi$ is an \emph{integral equivalence} and that $P\ucong Q$. Integral equivalence is a notion of ``isomorphism'' on integer polytopes which is sometimes referred to as \emph{unimodular equivalence} in the literature.

We concern ourselves with triangulations of integer polytopes.

\begin{defn}\label{defn:triangulation}
	Let $P$ be a $d$-dimensional integer polytope. A \emph{(lattice) triangulation} of $P$ is a finite set $\mathcal T$ of integer simplices such that
	\begin{enumerate}
		\item $P=\cup_{\D\in\mathcal T}\D$,
		\item if $\D\in\mathcal T$ and $\D'$ is a face of $\D$, then $\D'\in\mathcal T$, and
		\item for any $\D_1,\D_2\in\mathcal T$, the intersection $\D_1\cap\D_2$ is a (possibly empty) face of both $\D_1$ and $\D_2$.
	\end{enumerate}
	The triangulation $\mathcal T$ is \emph{unimodular} if each simplex of $\mathcal T$ has normalized volume 1 within its affine span.
\end{defn}

\subsection{DAGs, netflow vectors, and flow polytopes}

A graph $G=(V,E)$ is a collection of vertices $V$ and a collection of directed edges $E$ between vertices of $V$. An edge $e\in E$ is considered to begin at its \emph{tail} $t(e)\in V$ and end at its \emph{head} $h(e)\in V$. The graph $G$ is a \emph{directed acyclic graph (DAG)} if it has no oriented cycles. 
When we draw directed graphs, we will by default orient all edges left from to right (e.g., in Figure~\ref{fig:cons} the edge $\beta_1$ has tail $t(\beta_1)=v_2$ and head $h(\beta_1)=v_3$).
If $v$ is a vertex of $G$, let $\inn(v)$ be the set of incoming edges to $v$ and let $\out(v)$ be the set of outgoing edges of $v$.
A \emph{source} of $G$ is a vertex with no incoming edges, and a \emph{sink} of $G$ is a vertex with no outgoing edges.
A \emph{route} of $G$ is a path from a source to a sink.
Let $G=(V,E)$ be a directed acyclic graph on the vertex set $V=[n]$ and let $\a:=(\a_i)_{i\in V}$ be a vector of integer weights on the vertex set, with vertex $i$ having weight $\a_i$.
We call $\a$ a \emph{netflow vector} of $G$.
A function $F:E\to\mathbb R_{\geq0}$ is a (nonnegative) \emph{$\a$-flow}, or merely a \emph{flow} if the netflow vector $\a$ is understood from the context, if for each $i\in[n]$ the equation
\[
	\sum_{(e:i\to j)\in E}F(e)-\sum_{(e:k\to i)\in E}F(e)=\a_i
\]
is satisfied.
The flow $F$ is \emph{integer} (resp. \emph{rational}) if all coordinates $F(e)$ for $e\in E$ are integers (resp. rational).

\begin{defn}
	The \emph{$\a$-flow polytope} $\F_G(\a)$ is the polytope consisting of nonnegative $\a$-flows of $G$.
\end{defn}

When $\a$ is an integer vector,  the $\a$-flow polytope $\F_G(\a)$ is a (possibly empty) integer polytope.

Intuitively, flow polytopes model flows through networks. For example, one may imagine each edge of $G$ to be a pipe, and the label $F(e)$ to be an amount of water (or any type of flow) traveling through that pipe in the direction of the edge.
The equation $\sum_{(e:i\to j)\in E}F(e)-\sum_{(e:k\to i)\in E}F(e)=\a_i$ at any vertex $i$ translates to the idea that flow is conserved moving through that vertex, and $\a_i$ flow is added to the system at that node (or $|\a_i|$ is subtracted if $\a_i<0$).

\begin{lemma}\label{lem:netflow_sum_zero}
	If there exists an $\a$-flow of $G$, then $\sum_{i\in[n]}\a_i=0$.
\end{lemma}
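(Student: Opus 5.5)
The plan is to sum the defining conservation equations over all vertices and observe that every edge contributes twice with opposite signs. Fix an $\a$-flow $F$ of $G$, which exists by hypothesis.

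For each $i\in[n]$ we have
\[
	\a_i=\sum_{(e:i\to j)\in E}F(e)-\sum_{(e:k\to i)\in E}F(e).
\]
Summing over $i\in[n]$ gives
\[
	\sum_{i\in[n]}\a_i=\sum_{i\in[n]}\ \sum_{e\in\out(i)}F(e)-\sum_{i\in[n]}\ \sum_{e\in\inn(i)}F(e).
\]

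Each edge $e\in E$ has exactly one tail $t(e)$ and exactly one head $h(e)$. Hence the sets $\out(i)$ for $i\in[n]$ partition $E$, and likewise the sets $\inn(i)$ partition $E$. Both double sums therefore equal $\sum_{e\in E}F(e)$, and their difference is $0$.

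There is no real obstacle here. The only point needing care is the partition claim, which holds simply because every edge has a unique tail and a unique head in $V$; acyclicity and nonnegativity of $F$ are not needed.
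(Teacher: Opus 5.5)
Your proof is correct and is essentially the paper's argument: sum the conservation equations over all vertices and observe that each edge $e$ contributes $+F(e)$ at its tail and $-F(e)$ at its head, so everything cancels. Phrasing this via the partitions $\{\out(i)\}_{i\in[n]}$ and $\{\inn(i)\}_{i\in[n]}$ of $E$ is just a slightly more explicit version of the same bookkeeping.
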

\begin{proof}
	Choose $F$ to be an $\a$-flow of $G$. Then
	\begin{align*}
		\sum_{i\in[n]}\a_i&=\sum_{i\in[n]}\left(\sum_{(e:i\to j)\in E}F(e)-\sum_{(e:k\to i)\in E}F(e)\right).
	\end{align*}
	In the sum on the right, for any edge $e:j\to k$ the term $F(e)$ appears once indexed by vertex $j$ and $-F(e)$ appears once indexed by vertex $k$. These terms then cancel out for each edge and the sum is 0.
\end{proof}

The \emph{strength} of a netflow vector $\a$ is $S_\a:=\sum_{i\in[m]\ :\ \a_i>0}\a_i$. Note that when there exists an $\a$-flow of $G$, Lemma~\ref{lem:netflow_sum_zero} implies that this is the same as $\sum_{i\in[m]\ :\ \a_i<0}|\a_i|$.

\subsection{Framing triangulations for DAGs with one source and one sink}
\label{ssec:backonesource}

In this subsection, we recall a unimodular triangulation of $\F_G(\a)$ given by Danilov, Karzanov, and Koshevoy~\cite{DKK} when $G$ is a DAG with one source and one sink and $\a=(1,0,\dots,-1)$.

For this subsection, $G$ will always be a DAG with one source and one sink. A \emph{nonnegative flow on $G$ of strength $x\geq0$} is a flow with netflow vector $(x,0,\dots,0,-x)$ (where $x$ labels the source, and $-x$ labels the sink).
A \emph{unit flow} is a nonnegative flow on $G$ of strength $1$.
We use the symbol $\hat0$ for the source vertex and $\hat1$ for the sink vertex; all other vertices are \emph{internal vertices}.
The \emph{unit flow polytope} $\F_G(1)$ is the polytope of unit flows. More generally, if $x\in\mathbb R_{\geq0}$ then $\F_G(x\times 1)$ is the polytope of flows of strength $x$, or the dilation of the unit flow polytope by $x$.
Vertices of $\F_G(1)$ are precisely the indicator vectors of {routes} of $(G,\a)$ (recall that a \emph{route} is a path from source to sink).

\begin{defn}\label{defn:DKK-framed-dag}
	Let $G=(V,E)$ be a DAG with one source and one sink. For each internal vertex $v$ of $G$, assign a linear order to the edges in $\inn(v)$ and assign a linear order to the edges in $\out(v)$. This assignment is called a \emph{DKK-framing} of $G$, which we denote by $\FF$. We call a DAG $G$ with a DKK-framing $\FF$ a \emph{DKK-framed DAG}. If $e$ is less than $f$ in the linear order for $\FF$ on $\inn(v)$, we write $e\prec_{\FF,\inn(v)}f$ (and similarly for $\out(v)$). When $\FF$ and/or $\inn(v)$ or $\out(v)$ is clear, we may drop one or both subscripts.
\end{defn}

We notate a DKK-framing $\FF$ by labelling every internal half-edge of the DAG $G$ with a number. See Figure~\ref{fig:square} for an example (where all edges are oriented left to right): we have $\beta_1\prec_{\FF,\out(v)}\beta_2$ because the tail-label of $\beta_1$ is lower than that of $\beta_2$.

A DKK-framing on a DAG induces a notion of pairwise compatibility on its routes:

\begin{defn}\label{defn:compat0}
	Let $v$ be an internal vertex of a DKK-framed DAG $(G,\FF)$. We define the \emph{post-$v$-order} $\prec_v^+$ on the set of paths from $v$ to the sink $\hat1$ of $G$.
	Let $p$ and $q$ be distinct paths from $v$ to the sink.
	Let $\sigma=e_1\dots e_m$ be the maximal common subpath of $p$ and $q$ beginning at $v$. Suppose without loss of generality that $p$ contains $\sigma f$ and $q$ contains $\sigma g$, where $f$ is less than $g$ in $\prec_{\FF,\text{out}(h(e_m))}$.
	In this case, we say that $p\prec_v^+q$.
	This defines a total order on paths from $v$ to a sink.

	Dually, we define the \emph{pre-$v$-order} $\prec_v^-$ on the set of paths from a source to $v$.
	If $p$ and $q$ are distinct paths from a source to $v$, then let $\sigma=e_1\dots e_m$ be the maximal common subpath of $p$ and $q$ ending at $v$.
	If $p$ contains $f\sigma$ and $q$ contains $g\sigma$ where $f\prec_{\FF,\inn(t(e_1))}g$, then $p\prec_v^-q$.
\end{defn}

If $p$ and $q$ are routes of $(G,\FF)$ which both contain an internal vertex $v$, then we say that $p\prec_v^+q$ if $p_v^+\prec_v^+q_v^+$, where $p_v^+$ (resp. $q_v^+$) is the subpath of $p$ (resp. $q$) from $v$ to the sink. If $p$ and $q$ agree after the vertex $v$, then $p=_v^-q$ (even if $p$ and $q$ differ before the vertex $v$). We may similarly write $p\prec_v^-q$ or $p=_v^-q$.
In this way, $\prec_v^+$ is a total preorder on routes of $(G,\FF)$ passing through $v$ (i.e., a binary relation which is reflexive, transitive, and total, but not necessarily antisymmetric).

\begin{defn}\label{defn:compat0.5}
	Let $p$ and $q$ be routes of $(G,\FF)$ which both contain a vertex $v$. The routes $p$ and $q$ are \emph{incompatible} at $v$ if, without loss of generality, $p\prec_v^-q$ and $q\prec_v^+p$. The routes $p$ and $q$ are \emph{incompatible} if they are incompatible at any shared vertex, otherwise they are \emph{compatible}.
	A \emph{clique} is a set of pairwise compatible routes of $G$.
\end{defn}

Note that if $p$ and $q$ agree to the left of a vertex $v$, then $p=_v^-q$ and they cannot be incompatible at $v$.

The following definition and theorem connect cliques on $G$ to the unit flow polytope $\F_G(1)$.

\begin{defn}
	If $\R$ is a clique of $(G,\FF)$, then a \emph{($\R$-)clique combination} of $G$ is a linear combination
	\[\sum_{p\in\R}a_p\I(p),\]
	where $a_p\geq0$ for all $p\in\R$. It is \emph{positive} if $a_p>0$ for all $p\in\R$, and \emph{unit} if $\sum_{p\in\R}a_p=1$.
	The set of unit flows arising as (necessarily unit) $\R$-clique combinations is the \emph{polyhedral clique simplex} $\D_1(\R)$.
\end{defn}

We now phrase~\cite[Theorem 1]{DKK}. Note that the terminology of clique combinations is our own, and that a ``nonnegative flow'' in the sense of~\cite{DKK} is any nonnegative flow with netflow vector $(x,0,\dots,0,-x)$.

\begin{thm}[{\cite[Theorem 1]{DKK}}]\label{thm:DKK}
	Let $F$ be a strength-$x\geq0$ flow of a DKK-framed DAG $(G,\FF)$ (i.e., a flow with netflow vector $(x,0,\dots,0,-x)$). Then there is a unique positive clique combination $F=\sum_{p\in\R}a_p\I(p)$ for $F$. Moreover, if $F$ is integer-valued, then all coefficients $a_p$ are integers.
\end{thm}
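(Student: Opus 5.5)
We prove existence and uniqueness together with a greedy peeling algorithm, and get integrality from the same algorithm. The idea is to always peel off the route that is minimal in a suitable sense. For a nonzero flow $F$ of strength $x>0$, we build a route $p_F$ edge by edge. Starting at the source $\hat0$, at each vertex $v$ reached (entered along edge $e$) we take the $\prec_{\FF,\out(v)}$-smallest outgoing edge whose support we have not yet ``used up''. Concretely, we will use the standard DKK construction. Think of the flow on each edge as an interval of mass, and stack the flow at each internal vertex $v$ in the orders $\prec_{\inn(v)}$ and $\prec_{\out(v)}$. Identify the segment $[0,x]$ at the source with the incoming mass, and transport it through $v$ by matching the concatenated incoming intervals (ordered by $\prec_{\inn(v)}$) with the concatenated outgoing intervals (ordered by $\prec_{\out(v)}$) via the order-preserving identification of $[0,\text{inflow}(v)]$. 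Each point $t\in[0,x]$ then traces a route $p(t)$ from $\hat0$ to $\hat1$. The map $t\mapsto p(t)$ is piecewise constant, with finitely many breakpoints. Let $\R$ be the set of routes occurring on intervals of positive length, and let $a_p$ be the total length on which $p$ occurs. Then $F=\sum_{p\in\R}a_p\I(p)$, since every edge's interval is covered exactly by the points passing through it.

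Next we show $\R$ is a clique. Take $p=p(s)$ and $q=p(t)$ with $s<t$, sharing a vertex $v$. By induction along the route, the position of $s$ in $v$'s stacked interval is below that of $t$. This is because the identifications at each vertex are order preserving, and the stacking is compatible with $\prec_v^-$. The point is that if the two paths first diverge going backward at an incoming edge $f\prec g$ of some vertex $u$, then the mass of $f$ sits below the mass of $g$ at $u$, and order is preserved afterwards. So $p\preceq_v^- q$ in the stacking position. The same argument forward shows that lower position at $v$ forces $p\preceq_v^+ q$. Hence $p\prec_v^-q$ and $q\prec_v^+p$ cannot both hold, and the routes are compatible at every $v$. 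Integrality follows because when $F$ is integral, all interval endpoints at every vertex are integers. The breakpoints of $t\mapsto p(t)$ are images of these endpoints under integer translations, so they are integers and each $a_p\in\mathbb Z$.

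Uniqueness is the main obstacle. Suppose $F=\sum_{p\in\R'}b_p\I(p)$ is any positive clique combination. We claim that pairwise compatibility forces a total order on $\R'$ that agrees with $\prec_v^\pm$ at every shared vertex. First, we show that ``$p$ below $q$'' is well defined: we set $p<q$ if at the first divergence, or equivalently at any shared vertex, $p\prec^-$ or $p\prec^+ q$. Compatibility is exactly the statement that these comparisons never conflict. Transitivity then needs an argument using the total orders $\prec_v^\pm$. Then we lay the coefficients $b_p$ consecutively along $[0,x]$ in this order. At each vertex $v$, the routes through $v$ appear in an order compatible with both $\prec_{\inn(v)}$ and $\prec_{\out(v)}$. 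So this layout coincides with the stacking construction above, and thus $\R'=\R$ and $b_p=a_p$. I expect the delicate step to be checking that the global order restricted to routes through $v$ matches the stacking order of both incoming and outgoing edges. The way to handle it is induction on a topological order of the vertices.
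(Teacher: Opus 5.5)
The paper does not prove this statement; it quotes it from~\cite{DKK}. Your proposal therefore has to stand on its own. The transport (``stacking'') construction is a sound route to existence and integrality, but the uniqueness argument rests on a false claim.

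\textbf{The gap.} You assert that a clique $\R'$ carries a total order that agrees with $\prec_v^\pm$ at every shared vertex, with ``$p<q$ at the first divergence, or equivalently at any shared vertex.'' Compatibility only says that $\prec_v^-$ and $\prec_v^+$ do not conflict \emph{at the same vertex} $v$. It says nothing about comparisons made at different vertices.

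Here is a counterexample. Take vertices $\hat0,a,b,\hat1$ and edges $e_0:\hat0\to a$, $d,u:a\to b$, $e_1:b\to\hat1$ (subdivide $d,u$ if you want a simple graph). Frame it so that $d\prec_{\out(a)}u$ but $u\prec_{\inn(b)}d$.

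\begin{itemize}
\item The routes $p=e_0de_1$ and $q=e_0ue_1$ are compatible, since $p=_a^-q$ and $p=_b^+q$.
\item Yet $p\prec_a^+q$ while $q\prec_b^-p$.
\item So no total order on the clique $\{p,q\}$ agrees with the framing at both $a$ and $b$. In your own stacking, $p$ sits at the bottom of the stack at $a$ and at the top at $b$.
\end{itemize}

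Consequently, laying the $b_p$ consecutively along $[0,x]$ in a single fixed order cannot reproduce the stacks at every vertex. No induction on a topological order will establish that it does.

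The same misconception appears in your existence argument: ``$s<t$ implies $s$ lies below $t$ at every shared $v$'' fails in this example. There it is harmless, because the argument you actually give does not use $s<t$ and is correct. That argument is: $p\prec_v^-q$ forces $p$ lower at $v$ via the backward divergence, and lower at $v$ forces $p\preceq_v^+q$ via the forward divergence.

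\textbf{The repair: make uniqueness local.} Let $F=\sum_{p\in\R'}b_p\I(p)$ be a positive clique combination.

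\begin{enumerate}
\item Fix an internal vertex $v$. Compatibility at $v$ says exactly that the routes of $\R'$ through $v$ form a chain $\triangleleft_v$ for the product of the total preorders $\preceq_v^-$ and $\preceq_v^+$.
\item This chain groups routes by incoming edge in $\prec_{\inn(v)}$ order, and by outgoing edge in $\prec_{\out(v)}$ order.
\item For an edge $e:v\to w$, the comparisons $\prec_v^\pm$ and $\prec_w^\pm$ coincide on routes through $e$. Hence $\triangleleft_v$ and $\triangleleft_w$ agree on those routes.
\item Give each route $p\ni e$ the block of length $b_p$ in $[0,F(e)]$ determined by this order. This assignment is consistent at both ends of $e$.
\item At each $v$, the incoming and outgoing stacks are then the same sequence of blocks. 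So your transport map sends $p$'s block to $p$'s block.
\end{enumerate}

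Therefore the transport construction, which depends only on $F$ and the framing, returns exactly the pairs $(p,b_p)$. This proves uniqueness without any global order on the clique.
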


Theorem~\ref{thm:DKK} implies that the polyhedral clique simplices of maximal cliques of $(G,\FF)$ form a unimodular triangulation of $\F_G(1)$. More specifically, if $\R$ is a clique of $(G,\FF)$ then define $\D_1(\R):=\text{conv}\{\I(p)\ :\ p\in\R\}$.
\begin{cor}\label{cor:DKK}
	Let $(G,\FF)$ be a DKK-framed DAG. Then $\D_1(\R)$ is a unimodular simplex with vertices $\{\I(p)\ :\ p\in\R\}$ for every clique $\R$ of $(G,\FF)$, and the set
	\[\mathcal T=\{\D_1(\R)\ :\ \R\text{ is a clique of }(G,\FF)\}\]
	is a unimodular triangulation of $\F_G(1)$.
\end{cor}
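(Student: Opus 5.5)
The plan is to derive Corollary~\ref{cor:DKK} directly from Theorem~\ref{thm:DKK}, checking the three conditions of Definition~\ref{defn:triangulation} together with unimodularity.

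\textbf{Affine independence.} Fix a clique $\R$. The vectors $\{\I(p) : p\in\R\}$ are affinely independent, indeed linearly independent, for the following reason. Suppose a nontrivial linear relation held among them. Separate the terms with positive and with negative coefficients. This gives two distinct nonnegative combinations over subsets of $\R$ representing the same flow $F$. Both subsets are cliques, and the total coefficient is the common strength of $F$. After discarding zero coefficients, these are two distinct positive clique combinations of $F$, contradicting the uniqueness in Theorem~\ref{thm:DKK}. Hence $\D_1(\R)$ is a simplex with vertex set exactly $\{\I(p) : p\in\R\}$.

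\textbf{Face closure.} Every subset of a clique is a clique, and the faces of $\D_1(\R)$ are exactly the $\D_1(\R')$ for $\R'\subseteq\R$. So condition (2) holds.

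\textbf{Covering.} By existence in Theorem~\ref{thm:DKK}, every unit flow is a unit clique combination. So $\F_G(1)$ is the union of the $\D_1(\R)$, which is condition (1).

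\textbf{Intersections.} Let $F\in\D_1(\R_1)\cap\D_1(\R_2)$. Its supporting positive combinations in the two simplices are both positive clique combinations of $F$, so by uniqueness they coincide. Call the common support $\R_F\subseteq\R_1\cap\R_2$. Therefore $\D_1(\R_1)\cap\D_1(\R_2)=\D_1(\R_1\cap\R_2)$. For the reverse inclusion, $\R_1\cap\R_2$ is a clique contained in both. This set is a face of each simplex, giving condition (3).

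\textbf{Unimodularity.} This is the main step. Take the lattice $\Lambda=\mathbb Z^E\cap\operatorname{aff}(\D_1(\R))$, and pick a vertex $\I(p_0)$. I must show that the differences $\I(p)-\I(p_0)$ for $p\in\R$ generate the lattice $\mathbb Z^E\cap\operatorname{lin}\{\I(p)-\I(p_0)\}$.

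Take an integer point $z$ of $\operatorname{aff}(\D_1(\R))$ and write it as $z=\sum_{p\in\R} c_p\I(p)$ with $\sum_p c_p=1$. I want the $c_p$ to be integers. Choose an integer $N$ large enough that $N\I(p_0)+\dots$ (I would instead shift by a large integer multiple of the barycentric-type point $\sum_{p\in\R}\I(p)$) makes all coefficients positive. That is, consider $z+N\sum_{p\in\R}\I(p)$ for an integer $N$ with $c_p+N>0$ for all $p$. This is an integer nonnegative flow of integer strength with a positive $\R$-clique combination. By uniqueness it is the combination from Theorem~\ref{thm:DKK}, whose coefficients are integers by the integrality clause. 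Thus every $c_p$ is an integer. So the affine lattice in $\operatorname{aff}(\D_1(\R))$ is generated by the vertices, and the simplex has normalized volume $1$.

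The care needed here is that the shifted point is genuinely a nonnegative flow; this holds because it is a positive combination of route indicators. It is also the only point at which integrality enters.
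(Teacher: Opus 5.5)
Your proposal is correct and takes essentially the same route as the paper: the paper derives this corollary from Theorem~\ref{thm:DKK} directly and spells out the identical template in its proof of Corollary~\ref{cor:triangulation}, using uniqueness for affine independence and intersections, existence for covering, and the integrality clause for unimodularity. Your shift to $z+N\sum_{p\in\R}\I(p)$ is a good explicit version of the step the paper compresses into one sentence, namely passing from integer points of the cone to every lattice point of the affine span; just delete the abandoned aside about $N\I(p_0)$ when writing it up.
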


We call this triangulation $\T$ the \emph{(DKK-)framing triangulation} of $\F_G(1)$. It is also referred to as the \emph{DKK triangulation} in the literature.
See Example~\ref{ex:square}.
Note that it is also shown in~\cite{DKK} that these framing triangulations are regular, though we do not extend regularity to general integer flow polytopes in the present paper.

\begin{example}\label{ex:square}
	See the DKK-framed DAG of Figure~\ref{fig:square}. Edges are labelled in black and the DKK-framing at the unique internal vertex $v$ is labelled in red. Its flow polytope is integrally equivalent to a square, shown on the right of the figure.
	The routes $\alpha_1\beta_2$ and $\alpha_2\beta_1$ are incompatible at the vertex $v$, as $\beta_1\prec_{v}^+\beta_2$ but $\alpha_1\prec_{v}^-\alpha_2$. Any other choice of two routes is compatible. It follows that the two maximal cliques of $G$ are $\{\alpha_1\beta_1,\alpha_1\beta_2,\alpha_2\beta_2\}$ and $\{\alpha_1\beta_1,\alpha_2\beta_1,\alpha_2\beta_2\}$, shown on the middle of the figure. Each maximal clique $\R$ corresponds to a triangle (two-dimensional polyhedral simplex) of the flow polytope whose vertices are the indicator vectors of the routes of $\R$; see the framing triangulation of the flow polytope on the right of the figure.
	\begin{figure}
		\centering
		\def\svgscale{.38}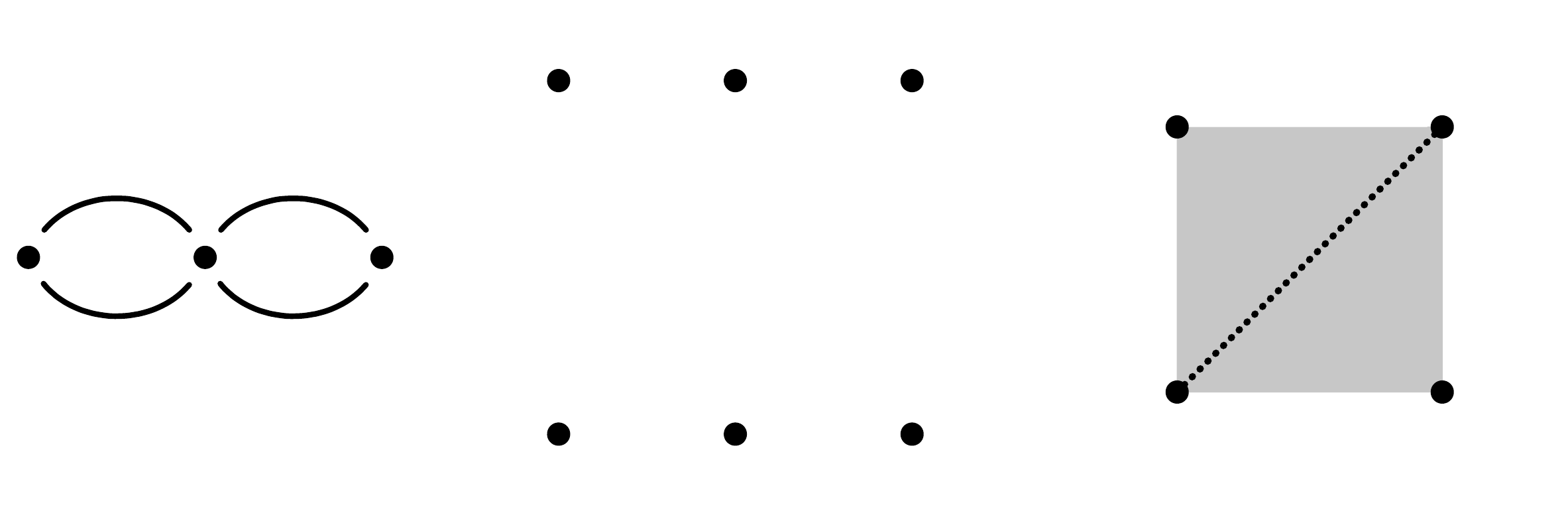
		\caption{A DKK-framed DAG, its two maximal cliques, and its framing-triangulated flow polytope.}
		\label{fig:square}
	\end{figure}
\end{example}

There have been two recent works extending DKK-framing triangulation results to flow polytopes from DAGs outside of the unit one-source-one-sink case. {In a major concurrent paper, 
Gonz\'alez D'Le\'on, Hanusa, and Yip~\cite{DHY} (among other things) give a generalization of DKK-framing triangulations to the setting of flow polytopes whose netflow vectors have only one negative entry. Similarly, the author~\cite{PLAN} obtained a theory of framing triangulations for DAGs with multiple sources, multiple sinks, and arbitrary netflow vectors under certain planarity conditions. These results are not necessary for the proofs of this paper, so we do not give them as background here. In Section~\ref{sec:special-cases} we will show that our framing triangulations are the same as those appearing in~\cite{DHY,PLAN} in the relevant settings.}

\section{Framing Triangulations in Conservationist Generality}
\label{sec:cons}

We will develop a theory of framing triangulations for arbitrary integer flow polytopes. For convenience, we will begin by restricting ourselves to the \emph{conservationist} case where flow is conserved at every internal vertex; up to integral equivalence this conservationist setting achieves all integer flow polytopes.
In this setting we will define framings directly on a DAG $G$ with conservationist netflow vector $\a$ without need for the the extra data of an augmentation of $G$ as referenced in the introduction. We will apply the results of Danilov, Karzanov, and Koshevoy~\cite{DKK} on the unit one-source-one-sink case to decompose an arbitrary $\a$-flow as a convex combination of indicator vectors of pairwise compatible routes, before grouping the routes together into \emph{layerings} to achieve framing triangulations.

\subsection{Framings on DAGs with conservationist netflows}

We first reduce the general case to the conservationist case and define a notion of framing in this setting.

\begin{defn}
	A netflow vector $\a$ of $G$ is \emph{conservationist} if
	\begin{enumerate}
		\item every source vertex $i$ satisfies $\a_i=1$,
		\item every sink vertex $i$ satisfies $\a_i<0$, and
		\item every internal vertex $i$ satisfies $\a_i=0$.
	\end{enumerate}
	Note that if $\a$ is conservationist then the strength $S_\a$ is the number of source vertices of $G$.
\end{defn}

We will see in the future (Corollary~\ref{cor:everything-is-conservationist}) that any integer flow polytope may be realized up to integral equivalence through a conservationist netflow vector.
{One could prove this at this time by first restricting to the subgraph induced by edges which may support nonzero flow (in particular, so that every source has positive netflow and every sink has negative netflow)} before performing some decontraction steps to satisfy the netflow conditions (or passing to an augmentation), but we omit this here for the sake of space.

We now set up some basic notation for a DAG $G$ with a conservationist netflow $\a$. We will always assume that $G$ has $m$ source vertices with labels $\{1,\dots,m\}$. Moreover, let $S_\a:=\sum_{i=1}^m\a_i$ be the strength of $\a$.

\begin{defn}\label{defn:framing}
	Let $G$ be a DAG with conservationist netflow vector $\a$. A \emph{framing} $\FF$ of $(G,\a)$ is the data of
	\begin{enumerate}
		\item for each non-sink vertex $i$, a total order $<_{\FF,\out(i)}$ on the outgoing edges $\out(i)$,
		\item for each internal vertex $i$, a total order $<_{\FF,\inn(i)}$ on the incoming edges $\inn(i)$, and
		\item a total order $<_{\FF,\src}$ on the source vertices of $G$.
	\end{enumerate}
	When $\a$ is conservationist, we will typically label the source vertices of $G$ as $\s_1,\s_2,\dots,\s_{S_\a}$ such that $\s_1<_{\FF,\src}\dots<_{\FF,\src}\s_{S_\a}$.
\end{defn}

In the future, we will say that a tuple $(G,\a,\FF)$ is \emph{conservationist} if $G$ is a DAG, $\a$ is a conservationist netflow vector, and $\FF$ is a framing of $(G,\a)$.

\begin{remk}\label{remk:embedding}
	When we draw examples of framed conservationist tuples $(G,\a,\FF)$, we will always use the embedding of $G$ to rank lower edges below higher edges in the orders $<_{\FF,\inn(i)}$ and $<_{\FF,\out(i)}$ and we will use the labels on source vertices to mark the order $<_{\FF,\src}$. For example, the left of Figure~\ref{fig:cons} shows a dag $G$ with conservationist netflow vector $\a=(1,1,0,-2)$. We will equip $(G,\a)$ with the framing $\FF$ which ranks, for example, the lower edge $\alpha_1$ below the higher edge $\beta_1$ in $<_{\FF,\inn(v_3)}$ and similarly ranks $\alpha_2<_{\FF,\out(v_3)}\beta_2$. The labels $v_1$ and $v_2$ of source vertices indicate that we will rank $v_1<_{\FF,\src}v_2$.
\end{remk}

\begin{remk}\label{remk:DKK}
	Note the similarity between our framings and the DKK-framings of Definition~\ref{defn:DKK-framed-dag}. If $G$ is a DAG with one source and one sink and $\a=(1,0,\dots,0,-1)$ is the unit netflow vector, then the total order $<_{\FF,\src}$ carries no information. Note, however, that in this case our framings will contain an order $<_{\FF,\out(1)}$ at the outgoing edge of the source vertex, while DKK-framings do not. In the case of unit flow polytopes, the information of $<_{\FF,\out(1)}$ will not affect the framing triangulation induced by $\FF$, but we will need outgoing framing orders to source vertices when there are multiple sources.
\end{remk}

\subsection{Route-clique combinations via the one-source-one-sink case}

In this subsection, we will use a framing $\FF$ on a DAG $G$ with conservationist netflow vector $\a$ to uniquely decompose any $x\times\a$-flow as a certain nonnegative combination of indicator vectors of routes.
First, we will generalize the post-$v$ and pre-$v$ orders of the one-source-one-sink case given in Definition~\ref{defn:compat0}.

\begin{defn}\label{defn:compat1}
	Let $(G,\a,\FF)$ be conservationist.
	Let $i$ be a vertex of $G$.
	We define the \emph{post-$i$-order} $<_i^+$ on the set of paths from $i$ to a sink of $G$.
	Let $p$ and $q$ be distinct paths from $i$ to a sink.
	Let $\sigma=e_1\dots e_m$ be the maximal common subpath of $p$ and $q$ beginning with $i$. Say $p$ contains $\sigma f$ and $q$ contains $\sigma g$, where $f<_{\R,\out(h(\sigma))}g$.
	In this case, we say that $p<_i^+q$.
	This defines a total order on paths from $i$ to a sink.

	Dually, if $i$ is not a sink then we define the \emph{pre-$i$-order} $<_i^-$ on the set of paths from a source to $i$.
	If $p$ and $q$ are distinct paths from a source to $i$, then let $\sigma=e_1\dots e_m$ be the maximal common subpath of $p$ and $q$.
	If $p$ contains $f\sigma$ and $q$ contains $g\sigma$, where $f<_{\R,\inn(t(\sigma))}g$, then $p<_i^-q$.

	If $p$ and $q$ are routes of $G$ which both contain the internal vertex $i$, then we say that $p<_i^+q$ if $p_i^+<_i^+q_i^+$, where $p_i^+$ (resp. $q_i^+$) is the subpath of $p$ (resp. $q$) from $i$ to a sink. If $p$ and $q$ agree after the vertex $i$, then $p=_i^+q$ (even if $p$ and $q$ differ before the vertex $i$). We may similarly write $p<_i^-q$ or $p=_i^-q$.
\end{defn}

Because the framing $\FF$ does not give an order to the incoming edges of a sink vertex, there is no pre-$i$ order if $i$ is a sink. Note that the framing does give an order to the outgoing edges of a source vertex, so there is a post-$i$ order at every source vertex.

The \emph{indicator vector} $\I(p)$ of a route $p$ is the vector in $\mathbb R^E$ with 1's at the edges used by $p$ and 0's elsewhere. note that when $\a=(1,0,\dots,0,-1)$ is the unit netflow vector, the indicator vector of any route is an $\a$-flow, but for other netflow vectors this is not the case.

\begin{defn}\label{defn:compat2}
	Let $p$ and $q$ be routes of $G$ and let $i$ be a vertex contained in both $p$ and $q$. The routes $p$ and $q$ are \emph{incompatible} at $i$ if, without loss of generality, $p<_i^-q$ and $q<_i^+p$. The routes $p$ and $q$ are \emph{incompatible} if they are incompatible at any shared vertex, otherwise they are \emph{compatible}.
	A \emph{route-clique} is a set of pairwise compatible routes of $G$.
	A \emph{route-clique combination} is a nonnegative linear combination
	\[\sum_{p\in \R}a_p\I(p),\]
	where $\R$ is a route-clique and $a_p\geq0$ for all $p\in\R$.
	The route-clique combination is \emph{positive} if $a_p>0$ for all $p\in\R$.
\end{defn}

We will now uniquely present an arbitrary $\a$-flow of $G$ as a route-clique combination (i.e., decompose it uniquely as a positive combination of indicator vectors of pairwise compatible routes). To do this, we will appeal to the framing triangulation result of Danilov, Karzanov, and Koshevoy~\cite[Theorem 1]{DKK}.

\begin{defn}
	Let $(G,\a,\FF)$ be conservationist.
	Let $\overline G$ be the DAG obtained by identifying all source vertices of $G$ and identifying all sink vertices of $G$. The framing $\FF$ descends to a DKK-framing $\overline\FF$ on $\overline G$ by forgetting the outgoing orders at the sources of $G$ and the total order $<_{\FF,\src}$.
	In this way, $(G,\a,\FF)$ gives rise to a DKK-framed DAG $(\overline G,\overline\FF)$ which we call the \emph{two-point identification} of $(G,\a,\FF)$.
	See Figure~\ref{fig:two_point_extension}, where the framing orders $<_{\FF}$ and $<_{\overline\FF}$ are induced from the embeddings (see Remark~\ref{remk:embedding}).
\end{defn}

\begin{figure}
	\centering
	\def\svgscale{.35}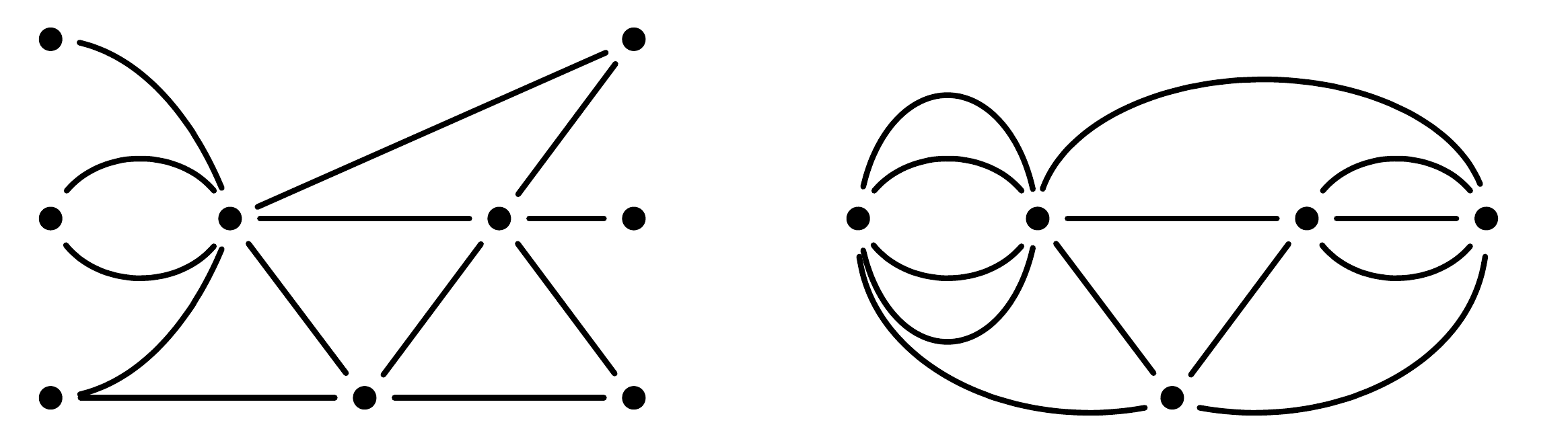
	\caption{A conservationist $(G,\a,\FF)$ (left) and its two-point identification (right).}
	\label{fig:two_point_extension}
\end{figure}

Note that a route of $G$ is precisely the same as a route of $\overline G$. Moreover, this respects compatibility:
\begin{remk}\label{remk:clique-dkk-bij}
	A set $\R$ of routes of $G$ is a route-clique of $(G,\a,\FF)$ (in the sense of Definition~\ref{defn:compat1}) if and only if $\R$ is a clique of $(\overline G,\overline\FF)$ (in the sense of Definition~\ref{defn:compat0.5}).
\end{remk}

Let $F$ be an $x\times\a$-flow of $G$ for some $x\geq0$.
Through the natural identification of edges of $G$ with edges of $\overline G$, we may consider $F$ to be a labelling of the edges of $\overline G$; it is then immediate that this is a $x\times S_\a$-flow of $\overline G$.
Note on the other hand that a strength-$(x\cdot S_\a)$ flow of $\overline G$ may not be an $x\times\a$-flow of $G$.
This shows an inclusion $\F_G(\a)\subseteq\F_{\overline G}(S_\a\times1)$
(the latter polytope being the $S_\a$-dilation of $\F_{\overline G}(1)$).
More specifically, the flow polytope $\F_G(\a)$ is the intersection of $\F_{\overline G}(S_\a\times1)$  with the hyperplanes $H_i=\{F\in\mathbb R^E\ :\ \a_i=\sum_{e\in\out(i)}F(e)\}$ where $i$ ranges over the source vertices of $G$.

\begin{prop}\label{prop:route-clique-combinations}
	Let $(G,\a,\FF)$ be conservationist. Let $F$ be an $x\times\a$-flow of $G$.
	Then there is a unique positive route-clique combination $F=\sum_{p\in \R}a_p\I(p)$ for $F$. Moreover, if $F$ is integer-valued, then all coefficients $a_p$ are integers.
\end{prop}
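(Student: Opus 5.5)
The plan is to transport $F$ to the two-point identification $(\overline G,\overline\FF)$ and apply Theorem~\ref{thm:DKK} there. As observed just before the statement, identifying edges of $G$ with edges of $\overline G$ turns $F$ into a flow of $\overline G$ of strength $x\cdot S_\a$. Theorem~\ref{thm:DKK} therefore gives a unique positive clique combination $F=\sum_{p\in\R}a_p\I(p)$ for $F$ on $(\overline G,\overline\FF)$. When $F$ is integer-valued, all the $a_p$ are integers.

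Next I would pull this decomposition back to $G$. Routes of $G$ and routes of $\overline G$ are the same objects, namely edge sequences starting at an edge out of a source and ending at an edge into a sink. Their indicator vectors in $\mathbb R^E$ coincide under the edge identification. By Remark~\ref{remk:clique-dkk-bij}, $\R$ is a route-clique of $(G,\a,\FF)$. Hence $F=\sum_{p\in\R}a_p\I(p)$ is a positive route-clique combination on $G$, which gives existence. The integrality statement transfers verbatim, since the coefficients are the same numbers.

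For uniqueness, suppose $F=\sum_{q\in\mathcal S}b_q\I(q)$ is another positive route-clique combination on $G$. By Remark~\ref{remk:clique-dkk-bij}, $\mathcal S$ is a clique of $(\overline G,\overline\FF)$. The identity $\sum_q b_q\I(q)=F$ holds in $\mathbb R^E$ and does not depend on which graph we view it in, so this is also a positive clique combination of $F$ regarded as a flow on $\overline G$. We need $F$ to have strength $x S_\a$ there, which it does. The uniqueness part of Theorem~\ref{thm:DKK} then forces $\mathcal S=\R$ and $b=a$.

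I expect the only delicate point to be checking that the dictionary between $G$ and $\overline G$ is exact. First, the edge sets really are identified: identifying vertices creates no loops because sources and sinks are distinct, and multiple edges are allowed. Second, compatibility is preserved, and this is exactly Remark~\ref{remk:clique-dkk-bij}. It holds because the source and sink vertices carry no pre-order (respectively post-order) in $\overline G$. At a source of $G$, two routes both starting there have equal pre-order, since neither has an edge before the source, so they cannot be incompatible there. At a sink there is no pre-order at all, so again no incompatibility can arise. Consequently the extra framing data at sources (outgoing orders and $<_{\FF,\src}$) never affects compatibility. With these checks in place, the proof is a direct reduction to Theorem~\ref{thm:DKK}.
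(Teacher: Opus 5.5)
Your proposal is correct and follows essentially the same route as the paper. You pass to the two-point identification $(\overline G,\overline\FF)$, apply Theorem~\ref{thm:DKK} to $F$ viewed as a strength-$xS_\a$ flow, and transport existence, uniqueness, and integrality back to $G$ via Remark~\ref{remk:clique-dkk-bij}; you simply make explicit the uniqueness step and the route/edge dictionary that the paper's two-line proof leaves implicit.
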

\begin{proof}
	Theorem~\ref{thm:DKK} gives the desired unique decomposition result for $\F_{\overline G}(1)$. Restricting this result to the $\a$-flows of $G$ and applying Remark~\ref{remk:clique-dkk-bij} translates this to the desired result for $\F_{G}(\a)$.
\end{proof}

{Grouping flows of $(G,\a,\FF)$ by which routes are used in their positive route-clique combination subdivides $\F_G(\a)$ as the restriction of the framing triangulation of $\F_{\overline G}(1)$ to $\F_G(\a)$. This subdivision of $\F_G(\a)$ is not in general a triangulation; we will now use the notion of \emph{layering} to refine it into a triangulation.}

\subsection{Layerings and framing triangulations}

\begin{defn}\label{defn:layering}
	Let $(G,\a,\FF)$ be conservationist. A \emph{layering} $L$ of $(G,\a,\FF)$ is a set of routes of $G$ such that
	\begin{enumerate}
		\item for every source vertex $i$ of $G$, there is precisely $\a_i=1$ route of $L$ beginning at $i$, and
		\item for every sink vertex $j$ of $G$, there are precisely $|\a_j|$ routes of $L$ ending at $j$.
	\end{enumerate}
	Recall that we index the source vertices of $G$ as $\s_1<_{\FF,\src}\dots<_{\FF,\src}\s_{S_\a}$.
	We will index a layering $L$ as $L=\{L_1,\dots,L_{S_\a}\}$, where for $i\in[S_\a]$ the route $L_i$ begins at $\s_i$.
	The set of layerings of $(G,\a,\FF)$ is notated $\textup{Layerings}(G,\a,\FF)$.
\end{defn}

If $L$ is a layering, define its \emph{indicator vector} $\J(L):=\sum_{p\in L}\I(p)$.

\begin{lemma}\label{lem:layering-int-flow}
	The map $\J:L\mapsto\J(L)$ gives a bijection from layerings of $(G,\a,\FF)$ to integer $\a$-flows of $G$.
\end{lemma}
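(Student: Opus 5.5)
The plan is to treat $\J$ as a map into integer $\a$-flows and to get both injectivity and surjectivity from the uniqueness and integrality in Proposition~\ref{prop:route-clique-combinations}. I read a layering as a \emph{route-clique} satisfying conditions (1) and (2) of Definition~\ref{defn:layering}, as described in the introduction. Pairwise compatibility is genuinely needed for injectivity. For example, if two sources $\s_1,\s_2$ feed a common vertex $v$ from which there are two paths $A,B$ to the sink, then $\{\s_1vA,\s_2vB\}$ and $\{\s_1vB,\s_2vA\}$ have the same indicator vector. Exactly one of these two sets is compatible at $v$.

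\emph{Step 1: $\J(L)$ is an integer $\a$-flow.} The vector $\J(L)=\sum_{p\in L}\I(p)$ is a nonnegative integer vector. Each route of $L$ that passes through an internal vertex contributes one unit of inflow and one unit of outflow there, so the netflow is $0=\a_i$ at internal vertices. A route cannot pass through a source or a sink except at its endpoints. Hence the netflow at a source $i$ is the number of routes of $L$ starting at $i$, which is $1=\a_i$. The netflow at a sink $j$ is minus the number of routes ending at $j$, which is $-|\a_j|=\a_j$.

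\emph{Step 2: injectivity.} The routes of a layering start at distinct sources, so they are distinct. Thus $\J(L)=\sum_{p\in L}1\cdot\I(p)$ is a positive route-clique combination of the $1\times\a$-flow $\J(L)$. By the uniqueness in Proposition~\ref{prop:route-clique-combinations}, this flow determines the route-clique $L$, so $\J(L)=\J(L')$ forces $L=L'$.

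\emph{Step 3: surjectivity.} Let $F$ be an integer $\a$-flow. Proposition~\ref{prop:route-clique-combinations} gives a positive route-clique combination $F=\sum_{p\in\R}a_p\I(p)$ with every $a_p$ a positive integer. Fix a source $i$. Every route through $i$ starts at $i$, so the outflow of $F$ at $i$ is $\sum_{p\in\R,\,p\text{ starts at }i}a_p=\a_i=1$. Since the $a_p$ are positive integers, exactly one route of $\R$ starts at $i$, and its coefficient is $1$. Every route starts at some source, so every coefficient equals $1$. Then the inflow of $F$ at a sink $j$ is the number of routes of $\R$ ending at $j$, and this equals $|\a_j|$. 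So $\R$ is a layering with $\J(\R)=F$.

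The only delicate point is the integrality and positivity bookkeeping in Step 3, which forces all coefficients to equal $1$. That step is immediate once Proposition~\ref{prop:route-clique-combinations} is invoked. The main conceptual obstacle is the clique requirement in the definition of layering, without which the injectivity in Step 2 fails.
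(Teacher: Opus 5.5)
Your proof is correct and is essentially the paper's argument: the integrality in Proposition~\ref{prop:route-clique-combinations} forces every coefficient to be $1$ (surjectivity), and uniqueness of positive route-clique combinations gives injectivity, which the paper phrases as $L\mapsto\J(L)$ and $F\mapsto\R_F$ being mutual inverses. Your choice to read a layering as a route-clique, as in the introduction, is exactly what the paper's proof tacitly uses: Definition~\ref{defn:layering} as written omits this condition, and your example shows injectivity would then fail.
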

\begin{proof}
	It is immediate from the definition of a layering that the indicator vector $\J(L)$ of a layering $L$ is an integer $\a$-flow.
	On the other hand, let $F$ be an integer $\a$-flow. Proposition~\ref{prop:route-clique-combinations} retrieves $F$ as a route-clique combination
	$
		F=\sum_{p\in\R_F}a_p\I(p)
	$
	with integer coefficients $a_p$.
	Because $F$ is an $\a$-flow, we have $\sum_{p\in\R_F\ :\ t(p)=i}a_p\I(p)=\a_i=1$ for any source vertex $i$ of $G$, hence all coefficients $a_p$ are equal to 1 and there is exactly one route $p_i$ of $\R_F$ starting any given source vertex.
	Similarly, for a sink vertex $j$ of $G$ we have
$\sum_{p\in\R_F\ :\ t(p)=i}\I(p)=-\a_j=|\a_j|$, hence there are $|\a_j|$ routes of $\R_F$ ending at $j$. It follows that $\R_F$ is a layering.
	Moreover, uniqueness of positive route-clique combinations as given in Proposition~\ref{prop:route-clique-combinations}
	ensures that $L\mapsto\J(L)$ and $F\mapsto\R_F$ are mutual inverses, and hence bijections.
\end{proof}

In light of Lemma~\ref{lem:layering-int-flow}, we may use layerings as our combinatorial stand-in for integer $\a$-flows, similar to the function of routes in the unit one-source-one-sink case. We will now work towards defining a notion of a layering-simplex, generalizing cliques of routes in the DKK-framing setting.

\begin{defn}
	Let $(G,\a,\FF)$ be conservationist. Let $L$ and $M$ be distinct layerings of $(G,\a,\FF)$. Choose $i\in[S_\a]$ maximal such that $L_i\neq M_i$ and suppose without loss of generality that $L_i<_{\s_i}^+M_i$. We say that $L<_\src^+M$. It is immediate that $<_\src^+$ is a total order on layerings, which we call the \emph{post-source order} on layerings.
\end{defn}

For example, Figure~\ref{fig:k33order} shows the order $<_\src^+$ of the six layerings of the conservationist $(K_{3,3},\a,\FF)$ depicted in Figure~\ref{fig:k33} (Example~\ref{ex:k33}).

If $\L=\{L^1,\dots,L^m\}$ is a set of layerings, let
$\routes(\L):=\{L^i_j\ :\ i\in[m],\ j\in[S_\a]\}=\cup_{i\in[m]}L^i$ be the underlying set of routes of $\L$.

\begin{defn}\label{defn:layering-simplex}
	Let $(G,\a,\FF)$ be conservationist. Let $\L=\{L^1,\dots,L^m\}$ be a set of layerings of $(G,\a,\FF)$, ordered so that $L^1<_\src^+\dots<_\src^+L^m$. We say that $\L$ is a \emph{layering-simplex} of $(G,\a,\FF)$ if
	\begin{enumerate}
		\item the set $\routes(\L)$ is a route-clique, and
		\item for any $i\in[m]$, we have 
			\[L^i=\min_{<_\src^+}\left\{L\in\text{Layerings}(G,\a,\FF)\ :\ \routes(\{L\})\subseteq\routes(\{L^{i},\dots,L^m\})\right\}.\]
	\end{enumerate}
\end{defn}

It is immediate that the set of layering-simplices of $(G,\a,\FF)$ is a simplicial complex.

Note that the compatibility condition on routes in the DKK-framed setting (Definition~\ref{defn:compat0.5}) is defined pairwise on routes, unlike Definition~\ref{defn:layering-simplex}. We will see in Example~\ref{ex:k33} that in general the simplicial complex of layering-simplices may fail to be a flag complex (i.e., it may fail to be induced by a pairwise compatibility condition on layerings).
In Section~\ref{sec:well-ordered}, we will provide conditions on a framed augmentation to be ``well-ordered,'' which will guarantee that the layering-simplex complex is a flag complex.

\begin{remk}
	It is immediate that a set $\L$ of layerings of $(G,\a,\FF)$ is a layering-simplex if and only if for any subset $\L'\subseteq\L$, we have
	\[
		\min_{<_\src^+}(\L')
		=
		\min_{<_\src^+}(
	\left\{L\in\text{Layerings}(G,\a,\FF)\ :\ \routes(\{L\})\subseteq\routes(\L')\right\}.
	\]
\end{remk}

\begin{lemma}\label{lem:route-deletion}
	Let $\L=\{L^1,\dots,L^m\}$ be a layering-simplex of conservationist $(G,\a,\FF)$. Then for any $i\in[m]$, there exists at least one route of $L^i$ which is not in $\routes(\{L^{i+1},\dots,L^{m}\})$.\footnote{The original arXiv submission of this work included the statement of Lemma~\ref{lem:route-deletion} as a (superfluous) third condition of the definition of a layering-simplex; this version has been updated to show that Lemma~\ref{lem:route-deletion} is a consequence of the two conditions of Definition~\ref{defn:layering-simplex}.}
\end{lemma}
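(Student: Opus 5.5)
Argue by contradiction: assume every route of $L^i$ lies in $\routes(\{L^{i+1},\dots,L^m\})$, i.e. $\routes(\{L^i\})\subseteq\routes(\{L^{i+1},\dots,L^m\})$, and derive a conflict with condition (2) of Definition~\ref{defn:layering-simplex} applied at two consecutive indices. The case $i=m$ is immediate: $\routes(\emptyset)=\emptyset$ and $L^m$ is nonempty, since $S_\a\ge1$ once there is at least one source. So assume $i<m$.

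The key observation is that adding $L^i$ does not enlarge the route set:
\[
\routes(\{L^{i},\dots,L^m\})=\routes(\{L^{i+1},\dots,L^m\}).
\]
Hence the two sets of candidate layerings in condition (2), for indices $i$ and $i+1$, coincide:
\[
\{L\ :\ \routes(\{L\})\subseteq\routes(\{L^{i},\dots,L^m\})\}=\{L\ :\ \routes(\{L\})\subseteq\routes(\{L^{i+1},\dots,L^m\})\}.
\]
Since $<_\src^+$ is a total order, each set has a unique minimum, and these minima agree. Condition (2) at index $i$ says the minimum is $L^i$, and at index $i+1$ it says the minimum is $L^{i+1}$. Thus $L^i=L^{i+1}$.

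This contradicts the strict ordering $L^i<_\src^+L^{i+1}$ of the distinct elements of the set $\L$. Therefore some route of $L^i$ lies outside $\routes(\{L^{i+1},\dots,L^m\})$.

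Note that condition (1) of Definition~\ref{defn:layering-simplex} is not needed. There is no real obstacle; the only care required is the boundary case $i=m$ and making sure condition (2) is invoked at both indices $i$ and $i+1$.
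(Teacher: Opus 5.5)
Your proposal is correct and follows the paper's proof essentially step for step. The paper also assumes the containment, rules out $i=m$ because $L^m$ is nonempty, and then uses condition (2) of Definition~\ref{defn:layering-simplex} at both $i$ and $i+1$ over the same route set to force $L^i=L^{i+1}$, contradicting the strict $<_\src^+$-ordering.
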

\begin{proof}
	Suppose to the contrary that there exists $i\in[m]$ such that all routes of $L^i$ are in $\routes(\{L^{i+1},\dots,L^m\})$. Since the latter set is nonempty, we must have $i<m$. Then 
	\begin{align*}
		L^i&=\min_{<_\src^+}\left\{L\in\text{Layerings}(G,\a,\FF)\ :\ \routes(\{L\})\subseteq\routes(\{L^{i},\dots,L^m\})\right\} \\
		&=\min_{<_\src^+}\left\{L\in\text{Layerings}(G,\a,\FF)\ :\ \routes(\{L\})\subseteq\routes(\{L^{i+1},\dots,L^m\})\right\} \\
		&=L^{i+1},
	\end{align*}
	where the first equality comes from Definition~\ref{defn:layering-simplex}~(2) applied at $i$, and the last comes from Definition~\ref{defn:layering-simplex}~(2) applied at $i+1$.
	This is a contradiction, ending the proof.
\end{proof}

\begin{defn}
	An \emph{($\L$)-layering-simplex combination} is a nonnegative linear combination
	\[\sum_{L\in \L}a_L\J(L),\]
	where $\L$ is a layering-simplex and $a_L\geq0$ for all $L\in\L$.
	The layering-simplex combination is \emph{positive} if $a_L>0$ for all $L\in\L$.
	If $\L$ is a layering, then the \emph{polyhedral layering-simplex} $\D_1(\L)$ is the set of all flows realized as an $\L$-layering-simplex combination. We will wait until Theorem~\ref{thm:triangulation} to show that it is, in fact, a polyhedral simplex.
\end{defn}

We will see in the future that the set of polyhedral layering-simplices of a conservationist $(G,\a,\FF)$ are a unimodular triangulation of the flow polytope $\F_G(\a)$. We give an example of this now in the interest of clarifying Definition~\ref{defn:layering-simplex}.

\begin{example}\label{ex:consexample}
	Let $G$ be the DAG on the left of Figure~\ref{fig:cons} equipped with the conservationist netflow vector $\a=(1,1,0,-2)$. Let $\FF$ be the framing with $v_1<_{\FF,\src}v_2$ and whose orders $<_{\FF,\inn(v)}$ and $<_{\FF,\out(v)}$ are induced from the planar embedding (Remark~\ref{remk:embedding}) with lower edges appearing earlier in these orders: i.e.,
	\begin{align*}
		\beta_1<_{\FF,\out(v_2)}\gamma \text{\ and \ } \alpha_1<_{\FF,\inn(v_3)}\beta_1 \text{\ and \ } \alpha_2<_{\FF,\out(v_3)}\beta_2.
	\end{align*}

	\begin{figure}
		\centering
		\def\svgscale{.38}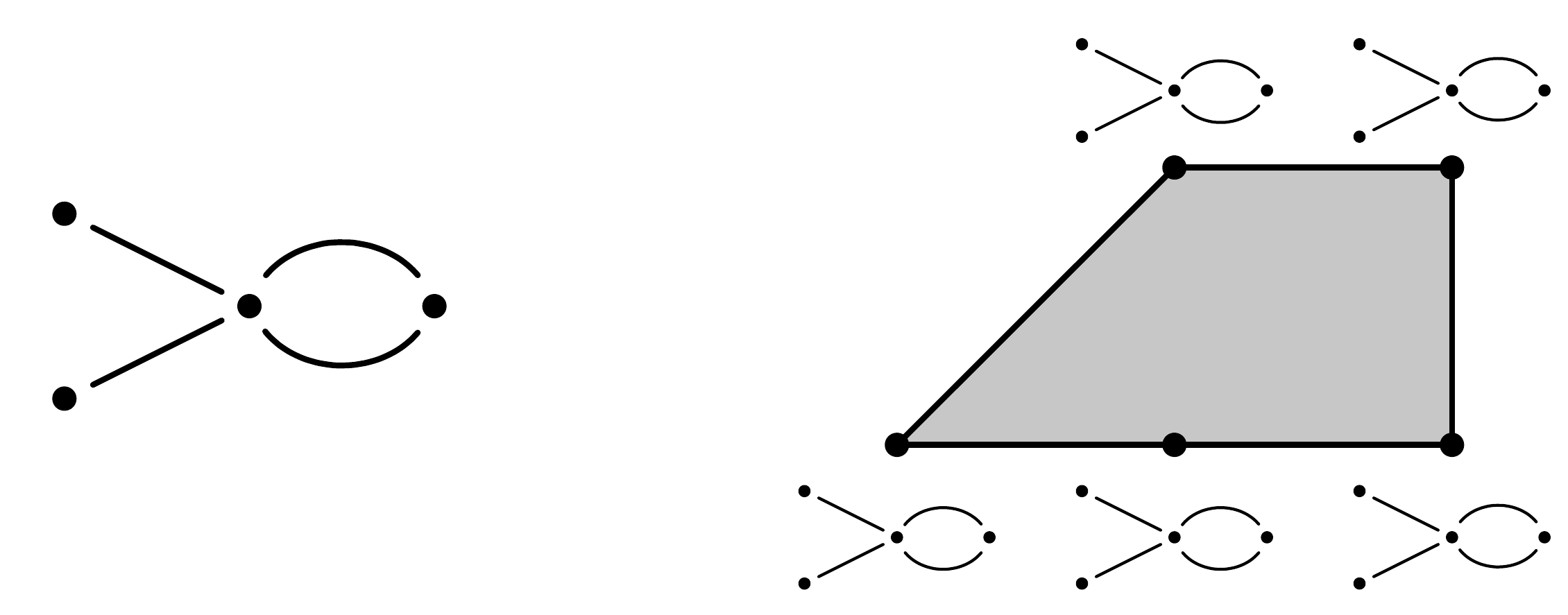
		\caption{A framed DAG with netflow vector $\a=(1,1,0,-2)$ and its framing-triangulated flow polytope.}
		\label{fig:cons}
	\end{figure}

	The flow polytope $\F_G(\a)$ is drawn on the right of Figure~\ref{fig:cons} with its integer points labelled by the five layerings of $(G,\a,\FF)$.
	The three maximal layering-simplices correspond to the three two-dimensional simplices of the framing triangulation of $\F_G(\a)$ indicated using dotted lines: for example, the leftmost simplex represents the maximal layering-simplex
	\[
		\big\{
			\{\alpha_1\alpha_2,\beta_1\alpha_2\},
			\{\alpha_1\alpha_2,\beta_1\beta_2\},
			\{\alpha_1\alpha_2,\gamma\}
		\big\}.
	\]
	The set $\big\{\{\alpha_1\alpha_2,\beta_1\alpha_2\},\{\alpha_1\beta_2,\gamma\}\big\}$ is not a layering-simplex because its underlying set of routes is not a route-clique: $\alpha_1\beta_2$ is incompatible with $\beta_1\alpha_2$.
	Now consider $L^1:=\{\alpha_1\alpha_2,\gamma\}$ and $L^2:=\{\alpha_1\beta_2,\beta_1\beta_2\}$; note that $L^1<_\src^+L^2$. The set $\routes(\{L^1,L^2\})$ is a route-clique. On the other hand, the $<_\src^+$-minimal layering using routes of $\routes(\{L^1,L^2\})$ is $\{\alpha_1\alpha_2,\beta_1\beta_2\}\not\in\{L^1,L^2\}$, so $\{L^1,L^2\}$ fails to be a layering-simplex by Definition~\ref{defn:layering-simplex}~(2).

	We will see later, phrased in the language of \emph{well-ordered} framed augmentations, that the layering-simplices of $(G,\a,\FF)$ may be calculated using a pairwise compatibility condition which is relatively simple compared to Definition~\ref{defn:layering-simplex} (see Examples~\ref{ex:augpoly} and~\ref{ex:augpolyg}).
\end{example}

Before proving the main result in conservationist generality, we need one technical proposition.

A \emph{bipartite graph} $\Gamma$ is an unoriented graph whose vertex set is partitioned $V=X\sqcup Y$ such that all edges of $\Gamma$ are incident to a vertex of $X$ and a vertex of $Y$. A graph is \emph{regular} if every vertex has the same degree.

\begin{lemma}\label{lem:reg}
	If $\Gamma$ is a regular bipartite graph, then $\Gamma$ has a perfect matching.
\end{lemma}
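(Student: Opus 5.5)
The plan is to derive the statement from Hall's marriage theorem, which we take as known: a bipartite graph with parts $X$ and $Y$ has a matching saturating $X$ if and only if $|N(S)|\geq|S|$ for every $S\subseteq X$, where $N(S)$ denotes the set of neighbours of $S$. Let $\Gamma$ be $d$-regular with bipartition $V=X\sqcup Y$. We allow multiple edges, since in the intended application the graph will likely arise from routes and layerings and may have repeated edges; the argument below does not use simplicity.

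First, we dispose of the degenerate case $d=0$. In that case the statement holds only if $X$ and $Y$ are both empty, where the empty matching is perfect. We therefore assume $d\geq1$, which is the situation in which the lemma will be used.

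Next, we show $|X|=|Y|$ by double counting. Every edge has exactly one endpoint in $X$, so $d|X|=|E(\Gamma)|$. By the same reasoning $d|Y|=|E(\Gamma)|$. Since $d\geq 1$, it follows that $|X|=|Y|$. As a consequence, any matching that saturates $X$ is automatically perfect.

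We then verify Hall's condition. Fix $S\subseteq X$ and let $E_S$ be the set of edges incident to $S$. Then $|E_S|=d|S|$, since each vertex of $S$ has degree $d$ and no edge joins two vertices of $X$. Every edge of $E_S$ has its other endpoint in $N(S)$. Each vertex of $N(S)$ is incident to at most $d$ edges in total, so $|E_S|\leq d|N(S)|$. Combining the two counts gives $d|S|\leq d|N(S)|$, and dividing by $d>0$ yields $|N(S)|\geq|S|$. Hall's theorem now provides a matching saturating $X$, which is perfect by the previous step.

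No step here is a real obstacle. The only points that need care are the degenerate case $d=0$ and making sure the counting still works when $\Gamma$ has multiple edges, and it does, because degrees count edges with multiplicity.
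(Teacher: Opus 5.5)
Your proof is correct and follows the paper's approach: the paper simply cites Hall's theorem, and you supply the standard double-counting verification of $|X|=|Y|$ and of Hall's condition. Your remarks on the case $d=0$ (where the statement as written fails for a nonempty graph) and on multiple edges are apt, since the application in Proposition~\ref{prop:tech} uses an $n$-regular bipartite multigraph with $n\geq1$.
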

\begin{proof}
	Follows from Hall's perfect matching theorem.
\end{proof}

\begin{prop}\label{prop:tech}
	Let $(G,\a,\FF)$ be conservationist and let $F$ be an $x\times\a$-flow of $G$ for $x>0$ expressed as a route-clique combination
	\[
		F=\sum_{p\in\R}a_p\I(p).
	\]
	Then there exists a layering $L$ using only routes of $\R$.
\end{prop}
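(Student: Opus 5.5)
The plan is to reduce the statement to a perfect matching problem between source vertices and ``sink slots'', and to verify Hall's condition using the flow equations. First discard every route with $a_p=0$, so that $a_p>0$ for all $p\in\R$. A layering is determined by choosing, for each source $\s_i$, one route $p_i\in\R$ that begins at $\s_i$, subject to the constraint that exactly $|\a_j|$ of the chosen routes end at each sink $j$. The chosen routes are automatically distinct because they begin at distinct sources. So the content of the proposition is to find such an assignment of sources to sinks that is realized by routes of $\R$.

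Form the bipartite graph $\Gamma$ with vertex classes $X$ and $Y$ defined as follows.
\begin{itemize}
\item $X=\{\s_1,\dots,\s_{S_\a}\}$ is the set of sources.
\item $Y$ contains $|\a_j|$ copies $(j,1),\dots,(j,|\a_j|)$ of each sink $j$.
\item $\s_i$ is joined to every copy of $j$ whenever some route $p\in\R$ goes from $\s_i$ to $j$.
\end{itemize}
Since $\a$ is conservationist and admits a flow, we have $|Y|=\sum_j|\a_j|=S_\a=|X|$ by Lemma~\ref{lem:netflow_sum_zero}. A perfect matching of $\Gamma$ assigns to each source a sink copy. Choosing for each $\s_i$ some route of $\R$ from $\s_i$ to the matched sink then gives exactly one route per source and exactly $|\a_j|$ routes per sink $j$, which is a layering using only routes of $\R$.

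To produce the perfect matching I would verify Hall's condition directly; this is the main step. Set $w_{ij}:=\sum_{p\in\R:\,p\text{ from }\s_i\text{ to }j}a_p\ge 0$. Summing the route-clique combination over $\out(\s_i)$ gives $\sum_j w_{ij}=x\a_{\s_i}=x$. Summing over $\inn(j)$ gives $\sum_i w_{ij}=x|\a_j|$; this holds because every route meets a source edge and a sink edge exactly once, and internal vertices have netflow $0$. Now let $A\subseteq X$, and let $J$ be the set of sinks adjacent to $A$. Then
\[
x|A|=\sum_{i\in A}\sum_{j}w_{ij}=\sum_{i\in A}\sum_{j\in J}w_{ij}\le\sum_{j\in J}x|\a_j|=x\,|N_\Gamma(A)|.
\]
Since $x>0$, this yields $|A|\le|N_\Gamma(A)|$. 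Hall's theorem, the same tool underlying Lemma~\ref{lem:reg}, then provides a perfect matching.

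I expect the only real subtlety to be this Hall argument. One cannot apply Lemma~\ref{lem:reg} verbatim because the coefficients $a_p$ may be irrational, so $\Gamma$ need not be regular even after taking multiplicities. For this reason I would invoke Hall's condition directly, as above, instead of scaling the coefficients to integers to build a regular multigraph. An equivalent alternative is to view $(w_{ij}/x)$, with columns split among the sink copies, as a doubly stochastic matrix and apply Birkhoff's theorem to get a permutation in its support.
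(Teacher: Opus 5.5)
Your proof is correct, but it takes a different route from the paper's.

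The paper invokes only the regular case of Hall's theorem (Lemma~\ref{lem:reg}), so it first reduces to integers. It replaces $F$ by a rational $\a$-flow supported on $\R$: since $\{S_\a\I(p) : p\in\R\}$ are the vertices of a simplex by Corollary~\ref{cor:DKK}, the set $\D_{S_\a}(\R)\cap\F_G(\a)$ is a nonempty rational polytope and so contains a rational point. It then scales this flow by some $n$ to an integer flow, whose coefficients are integers by Theorem~\ref{thm:DKK}. Finally, it distributes the resulting multiset of routes cyclically among the $|\a_y|$ copies of each sink $y$, obtaining an $n$-regular bipartite multigraph whose perfect matching gives the layering.

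You instead treat the real weights $w_{ij}$ as a fractional perfect matching and verify Hall's condition on the support graph directly. This removes both the rationalization step and the appeal to DKK integrality. Your argument also never uses pairwise compatibility, so it proves the statement for any nonnegative real decomposition of an $x\times\a$-flow into routes. The clique property enters only to ensure that the chosen routes form a route-clique, which holds because they are a subset of $\R$. Say this explicitly: layerings are meant to be route-cliques (as in the introduction), and the paper's proof checks this point. The paper's version, in exchange, needs only the regular-graph case of Hall's theorem, at the price of the extra reduction.

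One small correction to your justification. The identities $\sum_j w_{ij}=x$ and $\sum_i w_{ij}=x|\a_j|$ hold simply because a source has no incoming edges and a sink has no outgoing edges. Hence a route meets $\out(\s_i)$ (resp.\ $\inn(j)$) in exactly one edge if it starts at $\s_i$ (resp.\ ends at $j$), and in none otherwise. The appeal to internal vertices having netflow $0$ is not needed there.
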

\begin{proof}
	Our first goal is to obtain a nonzero rational $\a$-flow $F_\rat$ of $G$ which is expressed as a route-clique combination using only routes of $\R$:
	\[F_\rat=\sum_{p\in\R}b_p\I(p).\]
	First, define $F':=\frac{F}{x}$ and note that $F'$ is an $\a$-flow of $G$.
	Then $F'=\sum_{p\in\R}\frac{a_p^{\R}}{x}\I(p)$ is the (unique by Proposition~\ref{prop:route-clique-combinations}) positive route-clique combination realizing $F'$.
	If $F$ is rational-valued, define $F_\rat:=F'$ and $b_p:=\frac{a_p^\R}{x}$ for $p\in\R$. Otherwise, if $F$ is irrational then we will find a rational $\a$-flow $F_\rat$ realized by a route-clique combination whose route-clique is $\R$.
	Recall that $\R$ is a clique of $(\overline G,\overline\FF)$; Corollary~\ref{cor:DKK} applied to the DKK-framed DAG $(\overline G,\overline\FF)$ then shows that the points $\{S_\a\I(p)\ :\ p\in\R\}$ are the vertices of a simplex $\D_{S_\a}(\R)$.
	Then $F'=\sum_{p\in\R}\frac{a_p^{\R}}{x}\I(p)$ presents $F'$ within the interior of this simplex because $a_p^{\R}>0$ for all $p\in\R$ and $\sum_{p\in\R}a_p=xS_\a$.
	We have shown that $F'\in\D_{S_\a}(\R)\cap\F_G(\a)$, so the polytope $P:=\D_{S_\a}(\R)\cap\F_G(\a)$ must be nonempty. Since $P$ is the intersection of two rational polyhedra, there must exist a rational point $F_{\rat}$ of $P$. Since $F_\rat\in\F_G(\a)$, the point $F_\rat$ is an $\a$-flow of $G$. Since $F_{\rat}\in\D_{S_\a}$, it must have a presentation as a convex combination of the vertices of this simplex
	\[
		F_{\rat}=\sum_{p\in\R}c_p(S_\a\I(p))=\sum_{p\in\R}(c_pS_\a)\I(p).
	\]
	Then define $b_p:=c_pS_\a$ for $p\in\R$.
	In all cases, we have a rational $\a$-flow $F_\rat$ of $G$ expressed as a route-clique combination $F_\rat=\sum_{p\in\R}b_p\I(p)$ using only routes of $\R$.

	Choose now $n\in\mathbb Z_{\geq1}$ such that $F_\int:=nF'_\rat$ is integer-valued. Setting $b'_p:=nb_p$ for $p\in\R$, we then have $F_\int=\sum_{p\in\R}b'_p\I(p)$ and hence $b'_p$ are all integer-valued by Theorem~\ref{thm:DKK}.
	Let $M$ be the multiset of routes where each $p\in\R$ appears with multiplicity $b'_p$ (note that we may have $b'_p=0$ for some $p\in\R$, in which case this route does not appear in $M$).
	Since $F_\int$ is an $n\times\a$-flow of $G$, there are $nS_\a$ elements of $M$; moreover, at each source $x$ there are $n\a_x=n$ elements of $M$ beginning at $x$ and for each sink $y$ there are $n|\a_y|$ elements of $M$ ending at $y$. Order these routes $p_{y,1},\dots,p_{y,n|\a_y|}$ arbitrarily, so that every route of $M$ has a unique description as some $p_{y,i}$ (where $y$ is a sink of $G$ and $i\in[|\a_y|]$).

	Let $X$ be the set of source vertices of $G$. Let
	\[Y:=\{y_i\ :\ y\text{ is a sink of }G\text{ and }i\in[|\a_y|] \}.\]
	Define now a bipartite graph $\Gamma$ with vertex set $X\sqcup Y$ and edge set
	\[
		\{e_{y,i}:t(p)\to h(p)_{i'}\ :\ p_{y,i}\in M\text{ and }i'\in[|\a_y|]\text{ with }i'\equiv i\text{ (mod $\a_y$)}.
		\}
	\]
	For any $x\in X$, there are $n$ elements of $M$ beginning at $x$, hence $n$ edges of $\Gamma$ beginning at $x$. For any $y_i\in Y$, there are $n$ elements of $[n|\a_y|]$ equivalent to $i$ modulo $|\a_y|$, hence $n$ edges of $\Gamma$ ending at $y_i$. Then Lemma~\ref{lem:reg} implies that there exists a perfect matching $N$ of $\Gamma$.
	Now define
	\[
		L:=\{p_{y,i}\ :\ e_{y,i}\in N\}.
	\]
	Because $N$ is a perfect matching, there is $\a_x=1$ route of $L$ beginning at every source $x$ of $G$. For every sink $y$ of $G$, there is one edge $e_{y,i}$ incident to $y_i$ for every choice $i\in|\a_y|$, hence a total of $|\a_y|$ routes $p_{y,i}$ of $L$ ending at $y$. Finally, all routes of $N$ are in $\mathcal R$ so $N$ is a route-clique. This completes the proof that $L$ is a layering using only routes of $\R$.
\end{proof}

We are now able to prove the main result of the article in the conservationist generality.

\begin{thm}\label{thm:triangulation}
	Let $(G,\a,\FF)$ be conservationist and let $F$ be an $x\times\a$-flow of $G$ for $x>0$. There is a unique positive layering-simplex combination
	\[
		F=\sum_{L\in\L}a_L\J(L)
	\]
	for $F$. Moreover, if $F$ is integer-valued, then all coefficients $a_L$ are integers.
\end{thm}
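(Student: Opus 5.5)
The plan is a greedy peeling algorithm for existence, followed by an induction along the post-source order for uniqueness. Let $F$ be an $x\times\a$-flow and let $F=\sum_{p\in\R}a_p\I(p)$ be its unique positive route-clique combination from Proposition~\ref{prop:route-clique-combinations}.

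\textbf{Existence.} Apply Proposition~\ref{prop:tech} to see that some layering uses only routes of $\R$. Let $L^1$ be the $<_\src^+$-minimal such layering, and set
\[
  a_{L^1}:=\min_{p\in L^1}a_p .
\]
Then $F-a_{L^1}\J(L^1)=\sum_{p\in\R}a'_p\I(p)$, where $a'_p=a_p-a_{L^1}[p\in L^1]\geq0$. This is an $(x-a_{L^1})\times\a$-flow, written as a nonnegative combination over the sub-clique $\R'=\{p : a'_p>0\}$, which is a proper subset of $\R$. If the new strength is positive, recurse on this flow to get $L^2,L^3,\dots$. The process ends when the flow is $0$, since a zero-strength flow has empty support.

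It remains to check that the resulting set $\{L^1,\dots,L^m\}$ is a layering-simplex:
\begin{itemize}
  \item[(a)] Condition (1) holds because every route used lies in $\R$.
  \item[(b)] The layerings are increasing in $<_\src^+$. At stage $k$ the admissible layerings are those with routes in $\R^{(k)}$, and these clique sets decrease with $k$. So the minimum can only go up, and strictly, because $L^{k-1}$ loses a route.
  \item[(c)] Condition (2) holds for the same reason. The routes of $\{L^k,\dots,L^m\}$ are exactly $\R^{(k)}$: every route with positive remaining coefficient must be used later, and every later route lies in $\R^{(k)}$. Hence $L^k$ is precisely the minimum in Definition~\ref{defn:layering-simplex}~(2).
\end{itemize}

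\textbf{Integrality.} If $F$ is integer-valued, every $a_p$ is an integer by Proposition~\ref{prop:route-clique-combinations}. The greedy step subtracts an integer minimum, so all coefficients stay integers at every stage.

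\textbf{Uniqueness.} Suppose $F=\sum_{L\in\L}b_L\J(L)$ is any positive layering-simplex combination, with $\L=\{L^1<_\src^+\dots<_\src^+L^m\}$.
\begin{itemize}
  \item[(a)] Expanding gives a positive route-clique combination supported on $\routes(\L)$, because $\routes(\L)$ is a clique. By uniqueness in Proposition~\ref{prop:route-clique-combinations}, $\routes(\L)=\R$ and $a_p=\sum_{L\ni p}b_L$.
  \item[(b)] Condition (2) at $i=1$ then forces $L^1$ to be the $<_\src^+$-minimal layering inside $\R$, which is the greedy choice.
  \item[(c)] It remains to show $b_{L^1}=\min_{p\in L^1}a_p$. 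By Lemma~\ref{lem:route-deletion}, some route $p\in L^1$ does not occur in $L^2,\dots,L^m$, so $a_p=b_{L^1}$. Every other $q\in L^1$ satisfies $a_q\geq b_{L^1}$. Hence $b_{L^1}=\min_{p\in L^1}a_p$.
  \item[(d)] The set $\{L^2,\dots,L^m\}$ is again a layering-simplex, with positive combination equal to $F-b_{L^1}\J(L^1)$. Induction on $m$ identifies it with the greedy output.
\end{itemize}

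The main obstacle I expect is the bookkeeping in existence step (c), namely that $\routes(\{L^k,\dots,L^m\})$ equals the current support $\R^{(k)}$. This is what makes the greedy minimum coincide with the minimum over layerings contained in the union of the later layerings. The argument rests on the fact that each remaining route with positive coefficient is eventually used, because the algorithm only stops at the zero flow. I would verify this carefully, together with the use of Proposition~\ref{prop:tech} at every stage. That use needs the remaining flow to have positive strength, which holds whenever it is nonzero.
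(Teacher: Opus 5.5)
Your proposal is correct and follows essentially the same argument as the paper. Existence and integrality come from the greedy peeling algorithm: at each stage take the $<_\src^+$-minimal layering inside the current route-clique (which exists by Proposition~\ref{prop:tech}), with coefficient equal to the minimal route coefficient. Uniqueness comes from induction on $|\L|$, using Condition~(2) at $i=1$ together with Lemma~\ref{lem:route-deletion}. Your explicit checks that the output is strictly $<_\src^+$-increasing and that $\routes(\{L^k,\dots,L^m\})$ equals the current support are details the paper leaves implicit, and it states them with index typos there.
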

\begin{proof}
	Let $x>0$ and let $F$ be an $x\times\a$-flow of $G$. We will first algorithmically calculate a layering-simplex combination realizing $F$. 
	Use Proposition~\ref{prop:route-clique-combinations} to decompose $F$ as a route-clique combination
	\[F=\sum_{p\in\R}a_{p}^{\R}\I(p).\]

	Start by defining $F_1:=F$ and $x_1:=x$ and $\R_1:=\R$.
	Given $x_{j}>0$ and an $x_{j}\times\a$-flow $F_{j}$ realized by a positive route-clique combination using $\R_{j}\subseteq\R$, we proceed as follows.

	Proposition~\ref{prop:tech} shows that there exists at least one layering using only the routes of $\R_{j}$. Define $L^{j}:=\min_{<_\src^+}\left\{L\in\text{Layerings}(G,\a,\FF)\ :\ \routes(\{L\})\subseteq\R_{j}\right\}$.
	Define $a_{L^{j}}:=\min\{a_{p}^{\R_{j}}\ :\ p\in L^{j}\}$.

	Let $\R_{j+1}:=\R_{j}\backslash\{p\in L^{j}\ :\ a_p^{\R_{j}}=a_{L^{j}}\}$. For $p\in\R_{j+1}$, define the positive coefficient \[a_p^{\R_{j+1}}:= \begin{cases} a_p^{\R_{j}} & p\not\in L^{j}\\ a_p^{\R_{j}}-a_{L_{j}} & p\in L^{j}.\end{cases}\]
		Let $F_{j+1}:=F_{j}-a_{L^{j}}\J(L^{j})$ and $x_{j+1}:=x_{j}-a_{L_j}$. If $F_{j+1}$ is the zero function, terminate the algorithm. Otherwise, it is now immediate that
	\[F_{j+1}=\sum_{p\in\R_{j+1}}a_p^{\R_{j+1}}\I(p)\]
	is the positive route-clique combination for the $x_{j+1}\times\a$-flow $F_{j+1}$.

	This process must terminate because $\R_j\supsetneq\R_{j+1}$ as long as the algorithm continues. Say the algorithm returns $\L:=\{L^1,L^2,\dots,L^m\}$ for some $m\in\mathbb Z_{\geq1}$.
	We claim that $\L$ is a layering-simplex.

	Because the routes of $L^j$ come from $\R^j\subseteq\R$ for all $j\in[m]$, the set $\routes(\L)$ is a route-clique, showing condition (1) of Definition~\ref{defn:layering-simplex}.
	For any $i\in[m]$, the layering $L^j$ is defined to be the $<_\src^+$-minimal layering using routes of $\R^j=\routes(\{L^{j+1},\dots,L^m\})$, showing condition (2). 
	This ends the proof that $\L$ is a layering-simplex.
	Then
	\[
		F=\sum_{L\in\L}a_L\J(L)
	\]
	is the desired layering-simplex combination realizing $F$.

	Moreover, suppose that $F$ is integer-valued. Then all coefficients $a_p^{\R_1}$ are integers by the final sentence of Proposition~\ref{prop:route-clique-combinations}, hence $a_{L_1}$ is an integer, hence all coefficients $a_p^{\R_2}$ are integers, and so on. This shows that if $F$ is integer-valued, then all coefficients $a_L$ are integers.

	It remains to show that any $x\times\a$-flow $F$ is \emph{uniquely} realized as a positive layering-simplex combination. To this end, suppose $F=\sum_{M\in\M}a_M\J(M)$ is a positive layering-simplex combination for some $x\geq0$ and some $x\times\a$-flow $F$. We will complete the proof by showing that applying the above algorithm to $F$ retrieves the layering-simplex combination $\sum_{M\in\M}a_M\J(M)$.
	We will show this by induction on $|\M|$.

	The base case $|\M|=0$ is trivial, as $F$ must be the zero flow realized by the empty layering-simplex combination. We now proceed with the inductive step: Suppose $|\M|=m$ and suppose that we have shown this uniqueness property for layering-simplex combinations of length $\leq m-1$.
	Notate $\M=\{M^1,\dots,M^m\}$ as usual.
	Rearranging the layering-simplex combination
	\begin{align*}
		\label{eqn:a}
		F&=\sum_{M\in\M}a_M\J(M) \\
		&=\sum_{M\in\M}\sum_{p\in M}a_M\I(p)\\
		&=\sum_{p\in\routes(\{\M\})}\left(\sum_{M\in\M\ :\ p\in M}a_M\right)\I(p)
	\end{align*}
	gives the route-clique combination realizing $F$ using the route-clique $\routes(\{\M\})$.
	Set $\R:=\routes(\{\M\})$ and for $p\in\R$ set $a_p:=\sum_{M\in\M\ :\ p\in M}a_M$, so that $F=\sum_{p\in\R}a_p\I(p)$ is the route-clique combination realizing $F$.

	Then the first step of the algorithm retrieves \[L^1:=\min_{<_\src^+}\{M\in\textup{Layerings}(G,\a,\FF)\ :\ \routes(\{M\})\subseteq\routes(\M)\},\]
	which is equal to $M^1$ by condition (1) of Definition~\ref{defn:layering-simplex}.
	Moreover, Lemma~\ref{lem:route-deletion} shows that there exists a route $p\in M^1$ which is not in $\routes(\{M^2,\dots,M^m\})$. Then $a_p=a_{M^1}$. At the same time, $a_p=\min\{a_p\ :\ p\in M^1\}=\min\{a_p\ :\ p\in L^1\}=:a_{L^1}$. This shows that $a_{L^1}=a_{M^1}$. 

	The algorithm then continues anew applied to the flow $F_2:=F-a_{L^1}\J(L^1)$. This flow is already realized as a layering-simplex combination $F_2=\sum_{M\in\M\backslash\{M^1\}}a_M\J(M)$, so the induction hypothesis shows that the algorithm will agree with this layering-simplex combination continuing on. This completes the proof.
\end{proof}

In fact, Theorem~\ref{thm:triangulation} is a triangulation result on $\F_G(\a)$:

\begin{cor}\label{cor:triangulation}
	Let $(G,\a,\FF)$ be conservationist. The \emph{framing triangulation}
	\[
		\mathcal T(G,\a,\FF):=\{\D_1(\L)\ :\ \L\in\textup{Layerings}(G,\a,\FF)\}
	\]
	is a unimodular lattice triangulation of $\F_G(\a)$.
\end{cor}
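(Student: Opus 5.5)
We will derive Corollary~\ref{cor:triangulation} directly from Theorem~\ref{thm:triangulation}, checking the three conditions of Definition~\ref{defn:triangulation} and then unimodularity. The first step is to show that for each layering-simplex $\L=\{L^1,\dots,L^m\}$ the points $\J(L^1),\dots,\J(L^m)$ are affinely independent. This makes $\D_1(\L)\cap\F_G(\a)$, which is exactly $\mathrm{conv}\{\J(L)\ :\ L\in\L\}$, a simplex with these vertices. Here the convex combinations are precisely the $\L$-layering-simplex combinations of $1\times\a$-flows, since every $\J(L)$ is an $\a$-flow and the coefficients sum to $1$ by comparing strengths. Suppose two distinct convex combinations $\sum b_L\J(L)=\sum c_L\J(L)$ existed. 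Restrict each side to the subsets where its coefficients are positive; these subsets are again layering-simplices because the complex is simplicial. We would then have two positive layering-simplex combinations of the same flow, and the uniqueness in Theorem~\ref{thm:triangulation} forces the supports and coefficients to agree, a contradiction.

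The covering condition (1) holds because every $F\in\F_G(\a)$ is a $1\times\a$-flow. Theorem~\ref{thm:triangulation} writes $F$ as a positive $\L$-combination, so $F\in\D_1(\L)$. Condition (2) follows from the simplex being the convex hull of affinely independent points. Its faces are convex hulls of subsets of $\L$, and these subsets are layering-simplices since the layering-simplices form a simplicial complex, as noted after Definition~\ref{defn:layering-simplex}. For condition (3), take $F\in\D_1(\L)\cap\D_1(\L')$. Each side has a unique positive support inside $\L$ and inside $\L'$ respectively, and uniqueness in Theorem~\ref{thm:triangulation} makes these supports coincide, so they lie in $\L\cap\L'$. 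Hence $\D_1(\L)\cap\D_1(\L')=\D_1(\L\cap\L')$, which is a common face; the set $\L\cap\L'$ is a layering-simplex as a subset of one.

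Unimodularity is the step needing most care. We want every lattice point of $\mathrm{aff}(\D_1(\L))$ to be an integer affine combination of the vertices $\J(L)$. I would first reduce to the lattice points of the cone, using that the simplex vertices are themselves lattice points. For a top-dimensional simplex, unimodularity is equivalent to the half-open parallelepiped containing no lattice point other than $0$. Take such an integer point $F=\sum_L \lambda_L\J(L)$ with $0\le\lambda_L<1$. Then $F$ is an integer $x\times\a$-flow with $x=\sum\lambda_L$, which is an integer by comparing strengths. By the integrality clause of Theorem~\ref{thm:triangulation}, the positive layering-simplex combination of $F$ has integer coefficients. By uniqueness this combination is the one with coefficients $\lambda_L$ restricted to $\{\lambda_L>0\}$, so every $\lambda_L\in\{0\}$ and $F=0$.

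For a simplex that is not top-dimensional, the same argument applies inside its affine span: a lattice point of the cone over $\D_1(\L)$ is an integer flow lying in that cone. The cone also needs to be "saturated", meaning integer points of the linear span lying outside the cone are still integer combinations of the vertices. This follows from the parallelepiped criterion, applied to each simplex within its own span, because we may translate by integer multiples of the vertices. I expect this lattice-theoretic bookkeeping, rather than the combinatorics, to be the only delicate point.
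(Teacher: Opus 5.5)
Your proposal is correct and takes the same route as the paper. Affine independence and $\D_1(\L)\cap\D_1(\L')=\D_1(\L\cap\L')$ come from the uniqueness in Theorem~\ref{thm:triangulation}, covering comes from existence, closure under faces comes from the layering-simplices forming a simplicial complex, and unimodularity comes from the integrality clause. Your half-open parallelepiped and floor-translation argument just spells out the step the paper compresses into one sentence: reducing an arbitrary integer point of the span, which need not be a nonnegative flow, to an integer flow in the cone where the theorem applies.
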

\begin{proof}
	Recall the definition of a lattice triangulation from Definition~\ref{defn:triangulation}.
	First, note that uniqueness of layering-simplex combinations given in Theorem~\ref{thm:triangulation} shows that if $\L$ is a layering-simplex, then the set $\{\J(L)\ :\ L\in\L\}$ has no affine dependencies, hence $\D_1(\L)=\textup{conv}\{\J(L)\ :\ L\in\L\}$ is a simplex.
	This in turn implies that the set of faces of $\D_1(\L)$ is $\{\D_1(\L')\ :\ \L'\subseteq\L\}$, proving Condition (2) of Definition~\ref{defn:triangulation}.
	Condition (1) follows because every flow is realized as a layering-simplex combination by Theorem~\ref{thm:triangulation}.

	To show Condition (3), we claim that if $\K$ and $\L$ are layering-simplices then $\D_1(\K)\cap\D_1(\L)=\D_1(\K\cap\L)$.
	It is immediate by definition that $\D_1(\K)\cap\D_1(\L)\supseteq\D_1(\K\cap\L)$. On the other hand, take any $F\in\D_1(\K)\cap\D_1(\L)$ and write it as a positive layering-simplex combination $F=\sum_{M\in\M}a_M\J(M)$.
	By uniqueness of Theorem~\ref{thm:triangulation}, we must have $\M\subseteq\K$ and $\M\subseteq\L$, hence $F\in\D_1(\K\cap\L)$.
	This completes the proof that $\mathcal T(G,\a,\FF)$ is a lattice triangulation of $\F_G(\a)$.

	The final sentence of Theorem~\ref{defn:triangulation} shows that for any layering $\L$, the vertices of $\D_1(\L)$ form a $\mathbb Z$-basis of their linear span, hence that $\D_1(\L)$ is unimodular.
\end{proof}

We finally make the following remark, which follows from Corollary~\ref{cor:triangulation} because all maximal cells of a triangulation of a polytope $P$ are of dimension $\dim(P)$.
\begin{cor}\label{cor:card}
	The cardinality of every maximal layering-simplex of a conservationist $(G,\a,\FF)$ is $\dim(\F_G(\a))+1$ (with the convention that an empty polytope is of dimension $-1$).
\end{cor}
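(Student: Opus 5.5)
The plan is to deduce Corollary~\ref{cor:card} directly from Corollary~\ref{cor:triangulation}, using the standard fact that in a lattice triangulation of a polytope $P$ every inclusion-maximal simplex has dimension $\dim(P)$. First I dispose of the degenerate case. If $\F_G(\a)$ is empty, then by Lemma~\ref{lem:layering-int-flow} there are no layerings. So the only layering-simplex is the empty set, which has cardinality $0=-1+1$, as required.

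Now assume $\F_G(\a)$ is nonempty and let $\L$ be a maximal layering-simplex. By Corollary~\ref{cor:triangulation}, $\D_1(\L)$ is a simplex with vertex set $\{\J(L)\ :\ L\in\L\}$. These vertices are distinct, since $\J$ is injective by Lemma~\ref{lem:layering-int-flow}. Hence $\dim\D_1(\L)=|\L|-1$, and it suffices to show $\dim\D_1(\L)=\dim\F_G(\a)=:d$. Clearly $\dim\D_1(\L)\le d$.

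For the reverse inequality, suppose $\dim\D_1(\L)<d$ and pick a point $F$ in the relative interior of $\D_1(\L)$; this is a positive $\L$-layering-simplex combination. The simplices of $\T(G,\a,\FF)$ are finitely many and cover $\F_G(\a)$. Every ball around $F$ meets $\F_G(\a)$ in a set of dimension $d$, so by a Baire/finite-union argument it meets the interior of some $d$-dimensional simplex $\D_1(\K)$. Since simplices are closed and $F$ is a limit of such points, we may choose $\K$ with $F\in\D_1(\K)$. The positive layering-simplex combination of $F$ is unique by Theorem~\ref{thm:triangulation}, and it is the one supported on $\L$. Expressing $F$ inside $\D_1(\K)$ and dropping zero coefficients yields a positive combination supported on a subset of $\K$, which is again a layering-simplex since layering-simplices form a simplicial complex. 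Uniqueness then forces $\L\subseteq\K$. But $\K$ has $d+1>|\L|$ elements, so this contradicts maximality of $\L$.

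The only step needing care is the one asserting that $F$ lies in some full-dimensional cell. I would handle it concretely as follows. The union $U$ of the cells of dimension less than $d$ is a finite union of lower-dimensional sets, so it is nowhere dense in the $d$-dimensional polytope $\F_G(\a)$. Hence points of $\F_G(\a)\setminus U$ accumulate at $F$. Each such point lies in a $d$-dimensional cell, and by finiteness one cell $\D_1(\K)$ contains a sequence of them converging to $F$; closedness of $\D_1(\K)$ then gives $F\in\D_1(\K)$. Everything else is bookkeeping from Theorem~\ref{thm:triangulation}.
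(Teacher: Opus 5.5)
Your proposal is correct and follows the paper's route. The paper deduces the statement in one line from Corollary~\ref{cor:triangulation} plus the standard fact that maximal cells of a triangulation are full-dimensional; you additionally prove that fact via the finite-union/nowhere-dense argument and use uniqueness in Theorem~\ref{thm:triangulation} to identify maximal layering-simplices with maximal cells. The one corner you leave implicit is $\L=\emptyset$ when $\F_G(\a)\neq\emptyset$ (there $F$ cannot be chosen); it is harmless, because the covering property gives a nonempty layering-simplex, so $\emptyset$ is never maximal in that case.
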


Recall Example~\ref{ex:consexample}, which features the framing triangulation on the right of the relevant Figure~\ref{fig:cons}.
Note that the layering-simplices of Example~\ref{ex:consexample} may be defined from a pairwise compatibility conditions of the layerings, similar to the DKK-framing triangulations of the unit case. The following example does not have this property.

\begin{example}\label{ex:k33}
	\begin{figure}
		\centering
		\def\svgscale{.38}%% Creator: Inkscape 1.2.2 (b0a84865, 2022-12-01), www.inkscape.org
%% PDF/EPS/PS + LaTeX output extension by Johan Engelen, 2010
%% Accompanies image file 'im_k33.pdf' (pdf, eps, ps)
%%
%% To include the image in your LaTeX document, write
%%   \input{<filename>.pdf_tex}
%%  instead of
%%   \includegraphics{<filename>.pdf}
%% To scale the image, write
%%   \def\svgwidth{<desired width>}
%%   \input{<filename>.pdf_tex}
%%  instead of
%%   \includegraphics[width=<desired width>]{<filename>.pdf}
%%
%% Images with a different path to the parent latex file can
%% be accessed with the `import' package (which may need to be
%% installed) using
%%   \usepackage{import}
%% in the preamble, and then including the image with
%%   \import{<path to file>}{<filename>.pdf_tex}
%% Alternatively, one can specify
%%   \graphicspath{{<path to file>/}}
%% 
%% For more information, please see info/svg-inkscape on CTAN:
%%   http://tug.ctan.org/tex-archive/info/svg-inkscape
%%
\begingroup%
  \makeatletter%
  \providecommand\color[2][]{%
    \errmessage{(Inkscape) Color is used for the text in Inkscape, but the package 'color.sty' is not loaded}%
    \renewcommand\color[2][]{}%
  }%
  \providecommand\transparent[1]{%
    \errmessage{(Inkscape) Transparency is used (non-zero) for the text in Inkscape, but the package 'transparent.sty' is not loaded}%
    \renewcommand\transparent[1]{}%
  }%
  \providecommand\rotatebox[2]{#2}%
  \newcommand*\fsize{\dimexpr\f@size pt\relax}%
  \newcommand*\lineheight[1]{\fontsize{\fsize}{#1\fsize}\selectfont}%
  \ifx\svgwidth\undefined%
    \setlength{\unitlength}{246.27001953bp}%
    \ifx\svgscale\undefined%
      \relax%
    \else%
      \setlength{\unitlength}{\unitlength * \real{\svgscale}}%
    \fi%
  \else%
    \setlength{\unitlength}{\svgwidth}%
  \fi%
  \global\let\svgwidth\undefined%
  \global\let\svgscale\undefined%
  \makeatother%
  \begin{picture}(1,0.6447679)%
    \lineheight{1}%
    \setlength\tabcolsep{0pt}%
    \put(0,0){\includegraphics[width=\unitlength,page=1]{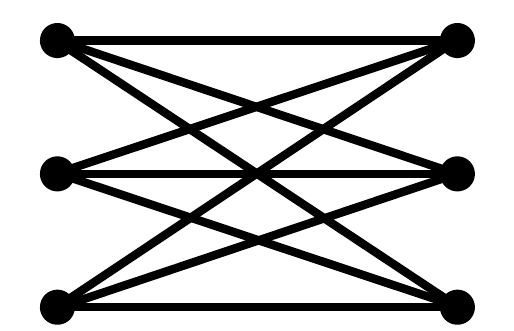}}%
    \put(0.00980631,0.11520689){\color[rgb]{0,0,0}\makebox(0,0)[lt]{\lineheight{1.25}\smash{\begin{tabular}[t]{l}$s_1$\end{tabular}}}}%
    \put(0.00980631,0.3750883){\color[rgb]{0,0,0}\makebox(0,0)[lt]{\lineheight{1.25}\smash{\begin{tabular}[t]{l}$s_2$\end{tabular}}}}%
    \put(0.00980631,0.63496565){\color[rgb]{0,0,0}\makebox(0,0)[lt]{\lineheight{1.25}\smash{\begin{tabular}[t]{l}$s_3$\end{tabular}}}}%
    \put(0.87952647,0.63496159){\color[rgb]{0,0,0}\makebox(0,0)[lt]{\lineheight{1.25}\smash{\begin{tabular}[t]{l}$t_3$\end{tabular}}}}%
    \put(0.87952647,0.3750883){\color[rgb]{0,0,0}\makebox(0,0)[lt]{\lineheight{1.25}\smash{\begin{tabular}[t]{l}$t_2$\end{tabular}}}}%
    \put(0.87952647,0.11521095){\color[rgb]{0,0,0}\makebox(0,0)[lt]{\lineheight{1.25}\smash{\begin{tabular}[t]{l}$t_1$\end{tabular}}}}%
  \end{picture}%
\endgroup%

		\caption{The complete bipartite graph $K_{3,3}$.}
		\label{fig:k33}
	\end{figure}

	Figure~\ref{fig:k33} shows the complete bipartite graph $K_{3,3}$, with edges oriented left to right.
	We supply it with the netflow vector $\a$ giving every source netflow $1$ and every sink netflow $-1$. Observe that $(K_{3,3},\a)$ is conservationist.
	Define a framing $\FF$ on $(K_{3,3},\a)$ by inducing each framing order $<_{\FF,\out(s_i)}$ from the embedding and ordering $s_1<_{\FF,\src}s_2<_{\FF,\src}s_3$ (Remark~\ref{remk:embedding}).
	Layerings of $(K_{3,3},\a)$ correspond to perfect matchings of $K_{3,3}$, of which there are six.
	We will index the layerings as $L(1)<_\src^+ \dots <_\src^+L(1)$.
	Figure~\ref{fig:k33order} shows these layerings ordered left-to-right by $<_{\src}^+$:

	\begin{figure}[H]
		\centering
		\def\svgscale{.38}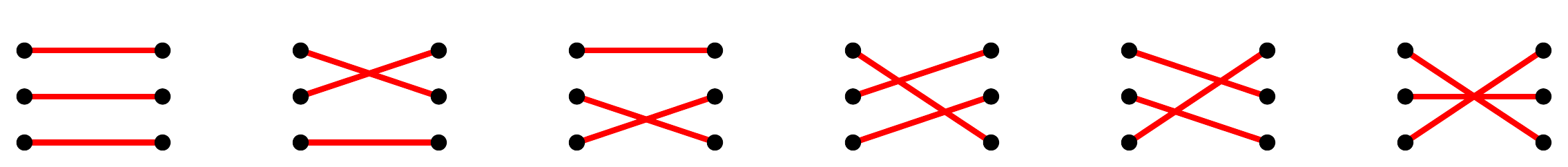
		\caption{The order $<_\src^+$ of all six layerings of $(K_{3,3},\a,\FF)$.}
		\label{fig:k33order}
	\end{figure}

	Let $\mathcal L=\{L^1,\dots,L^m\}$ be a set of layerings of $(K_{3,3},\a,\FF)$. We claim that $\mathcal L$ is a layering-simplex if and only if $\{L(2),L(3),L(6)\}\not\subseteq\mathcal L$.

	Because there are no internal vertices of $G$, every set of routes is a route-clique, so Definition~\ref{defn:layering-simplex}~(1) holds for every set of layerings. Then $\L$ is a layering-simplex if and only if it satisfies Definition~\ref{defn:layering-simplex}~(2).

	Consider Definition~\ref{defn:layering-simplex}~(2).
	Observe that for $i\neq2$ we have 
	\[L(i)=\min_{<_\src^+}\left\{L\in\text{Layerings}(K_{3,3},\a,\FF)\ :\ \routes(\{L\})\subseteq\routes(\{L(i),\dots,L(6)\})\right\},\]
	so Condition (2) may fail only when $\L^i=L(2)$. 
	Hence, if $\L$ does not contain $L(2)$ then it is a layering-simplex; suppose now that $\L$ contains $L(2)$. Write $L^i=L(2)$ for $i\in\{1,2\}$.
	If $\L$ contains $\{L(2),L(3),L(6)\}$, where $L^i=L(2)$, then
	\[
		\min_{<_\src^+}\left\{L\in\text{Layerings}(K_{3,3},\a,\FF)\ :\ \routes(\{L\})\subseteq\routes(\{L^i,\dots,L^m\})\right\}=L(1)\neq L^i
	\]
	and $\L$ is not a layering-simplex by Definition~\ref{defn:layering-simplex}~(2). On the other hand, if $\L$ does not contain $\{L(2),L(3),L(6)\}$ then $\routes(\{L^i,\dots,L^m\})$ is missing at least one route of $L(1)$ and
	\[
		\min_{<_\src^+}\left\{L\in\text{Layerings}(K_{3,3},\a,\FF)\ :\ \routes(\{L\})\subseteq\routes(\{L^i,\dots,L^m\})\right\}=L(2)=L^i,
	\]
	proving that $\L$ satisfies Definition~\ref{defn:layering-simplex}~(2) in all positions and hence is a layering-simplex.
	This completes the proof that a set $\L$ of layerings of $(K_{3,3},\a,\FF)$ is a layering-simplex if and only if $\{L(2),L(3),L(6)\}\not\subseteq\L$.

	In particular, there are three maximal layering-simplices of $(K_{3,3},\a,\FF)$, each of which misses exactly one element of $\{L(2),L(3),L(6)\}$. This shows that the framing triangulation of $\F_{K_{3,3}}(\a)$ induced by $\FF$ has three maximal simplices, hence $\F_{K_{3,3}}(\a)$ has normalized volume 3.

	We now observe two properties of this example which cannot appear in the unit case.
	\begin{enumerate}
		\item The set of any two layerings $\{L(i),L(j)\}$ is a layering-simplex, but the set of all layerings is not a layering-simplex. This means that the layering-simplex complex cannot be induced by a pairwise compatibility condition as in the unit DKK-framed case; the simplicial complex of layering-simplices is not flag. In~\cite{PLAN}, it was observed that no triangulation of $\F_{K_{3,3}}(\a)$ is induced by a flag complex on integer points.
		\item The dual graph of this framing triangulation is the complete graph on three vertices $K_3$. Notably, this graph is not the Hasse diagram of a partially ordered set, providing an obstacle to any extension of Ceballos and von Bell's theory of framing lattices~\cite{vBC} to our general setting of framing triangulations.
	\end{enumerate}
	In Section~\ref{sec:well-ordered}, we will provide (in the framed augmentation setting) conditions under which we expect more properties from the DKK-framing setting to hold.
\end{example}

\section{Framing Triangulations in Full Generality via Framed Augmentations}
\label{sec:aug}

The previous section gave a theory of framings and framing triangulations for DAGs with conservationist netflow vectors. 
We will now use this to give a theory of framing triangulations for an arbitrary DAG $G$ with (not necessarily conservationist) netflow vector $\a$ adding some extra edges to get an \emph{augmentation} $(\hat G,\hat\a)$ with a straightforward integral equivalence $\F_{\hat G}(\hat\a)\ucong\F_G(\a)$. We will then define framings on augmentations which will give a framing triangulation analogous to the conservationist case.
We will begin by giving the main result in this full generality with an example before moving on to the proof.

\subsection{Definitions and Results}

{We now define augmentations of an arbitrary graph $G$ and netflow vector $\a$, which are similar to the augmented graphs defined in~\cite{DHY}. We will then define variants of framings, layerings, layering-simplices, and framing triangulations in the setting of augmentations.}

\begin{defn}\label{defn:cover}
	Let $G=(V,E)$ be a graph with netflow vector $\a$. We say that a graph $\hat G=(\hat V,\hat E)$ with netflow vector $\hat\a$ is an \emph{augmentation} of $(G,\a)$ if $\hat V=V\sqcup V_X\sqcup V_Y$ and $\hat E=E\sqcup E_X\sqcup E_Y$ where
	\begin{enumerate}
		\item $E_X$ is a set of $S_\a$ \emph{inflow edges} from $V_X$ to $\{i\in V\ :\ \a_i>0\}$ such that $|\{x\in E_X\ :\ h(x)=i\}|=\a_i$ for all $i\in V$ with $\a_i>0$,
		\item $E_Y$ is a set of \emph{outflow edges} from $V_Y$ to $\{i\in V\ :\ \a_i<0\}$ such that $1\leq|\{y\in E_Y\ :\ t(y)=i\}|\leq|\a_i|$ for all $i\in V$ with $\a_i<0$,
		\item every vertex of $V_X$ (resp. $V_Y$) is incident to precisely one edge of $E_X$ (resp. $E_Y$), and
		\item the netflow vector $\hat\a$ sends all vertices of $V$ to 0, all vertices of $V_X$ to $1$, and all vertices of $V_Y$ to negative integers subject to the condition
			\begin{align*}
				\a_i&=\sum_{y\in E_Y\ :\ t(y)=i}\hat\a_{h(y)}\text{ for all }i\in V\text{ with }\a_i<0.
			\end{align*}
	\end{enumerate}
\end{defn}

When we have chosen $(G,\a)$ as well as an augmentation $(\hat G,\hat\a)$, for a vertex $i\in V$ we will use $\inn(i)$ to refer to the edges of $G$ with head $i$ and we will use $\hinn(i)$ to refer to the edges of $\hat G$ with head $i$. Similarly we will use $\out(i)$ and $\hout(i)$.

See Figure~\ref{fig:aug} for an example of augmentations.

\begin{figure}
	\centering
	\def\svgscale{.38}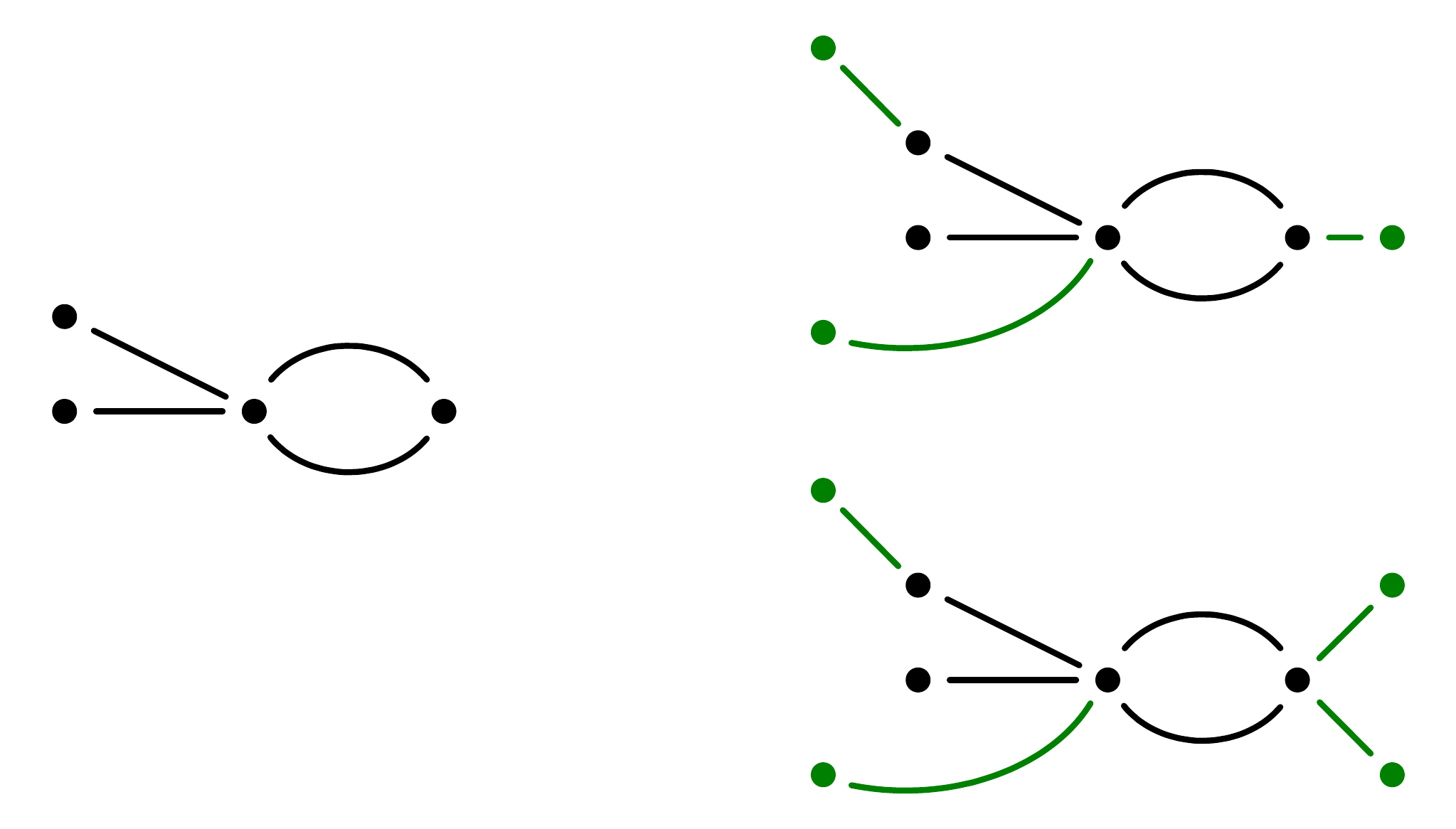
	\caption{On the left is a DAG $G$ with netflow vector $\a$, and on the right are the two possible augmentations of $(G,\a)$ with new vertices and edges in green.}
	\label{fig:aug}
\end{figure}

\begin{lemma}\label{lem:coverint}
	Let $G$ be a DAG with netflow vector $\a$ and let $(\hat G,\hat\a)$ be an augmentation of $(G,\a)$. The projection map $F\mapsto F|_E$ is an integral equivalence from $\F_{\hat G}(\hat\a)$ to $\F_G(\a)$.
\end{lemma}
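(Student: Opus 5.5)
The plan is to show that the restriction map $\pi:F\mapsto F|_E$ is an affine bijection $\F_{\hat G}(\hat\a)\to\F_G(\a)$ whose inverse is also integral, so that $\pi$ preserves lattice points in both directions. Linearity of $\pi$ is clear. The real content is that every flow on $\hat G$ is determined by its values on $E$, up to the freedom at the outflow edges, which must be examined separately.

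First I show that $\pi$ maps $\F_{\hat G}(\hat\a)$ into $\F_G(\a)$. Let $\hat F$ be an $\hat\a$-flow. Each inflow vertex $u\in V_X$ has netflow $1$ and a single incident edge, necessarily outgoing since $E_X$ runs from $V_X$ into $V$, so $\hat F$ equals $1$ on every inflow edge. For $i\in V$, conservation in $\hat G$ says that net outflow along $E$ equals (number of inflow edges into $i$) minus (total $\hat F$ on outflow edges at $i$).

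The second of these terms is where the main obstacle lies. Definition~\ref{defn:cover} writes the outflow edges as going from $V_Y$ to $V$, but the netflow condition in~(4) and the intended picture (outflow vertices are sinks with negative netflow) force the reading that these edges go from $i$ to vertices of $V_Y$. I will adopt that reading. Then each $v\in V_Y$ is a sink with a single incoming edge $y$, and conservation at $v$ forces $\hat F(y)=-\hat\a_v=|\hat\a_v|$. Hence $\hat F$ is \emph{constant} on $E_X\sqcup E_Y$: it equals $1$ on inflow edges and $|\hat\a_{h(y)}|$ on each outflow edge $y$. This does not depend on $\hat F$. By~(1) and~(4), the net contribution at $i$ is $\a_i$ when $\a_i>0$, is $-\sum_{t(y)=i}|\hat\a_{h(y)}|=\a_i$ when $\a_i<0$, and is $0$ otherwise. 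So $\hat F|_E$ is an $\a$-flow, and it is nonnegative because $\hat F$ is.

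Conversely, given $F\in\F_G(\a)$, I extend it by these same forced constants on $E_X\sqcup E_Y$. The computation above, run in reverse, shows this extension is a nonnegative $\hat\a$-flow. Since the values on the added edges are forced, $\pi$ is a bijection, and its inverse $\sigma$ is an affine map that sends $F$ to $F$ plus a fixed integer vector. Both $\pi$ and $\sigma$ send integer vectors to integer vectors. Moreover $\pi$ maps $\operatorname{aff}(\F_{\hat G}(\hat\a))$ onto $\operatorname{aff}(\F_G(\a))$ with inverse $\sigma$, so $\pi$ restricts to a bijection between the lattice points of these affine spans. That is exactly the definition of an integral equivalence. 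The empty case is trivial, since by the bijection both polytopes are empty simultaneously.
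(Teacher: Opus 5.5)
Your proof is correct and is essentially the paper's argument written out in full. The paper says only that the lemma ``follows immediately from the definitions'' (alternatively, by contracting the inflow and outflow edges); you make this precise by noting that the flow on each inflow edge is forced to be $1$ and on each outflow edge $y$ to be $|\hat\a_{h(y)}|$, so restriction is a bijection whose inverse is translation by a fixed integer vector. You are also right to read the outflow edges as directed from $V$ into $V_Y$: the phrase ``from $V_Y$'' in Definition~\ref{defn:cover} is a slip, as the definition's own use of $t(y)=i$ and $\hat\a_{h(y)}$ shows.
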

\begin{proof}
	Follows immediately from the definitions. Alternatively, note that given an augmentation $(\hat G,\hat\a)$, one retrieves $(G,\a)$ by performing contractions along every inflow edge and outflow edge.
\end{proof}

\begin{defn}\label{defn:framingB}
	Let $G$ be a DAG with netflow vector $\a$ and let $(\hat G,\hat\a)$ be an augmentation of $(G,\a)$. A \emph{framing} $\hat\FF$ of the augmentation $(\hat G,\hat\a)$ is the data of
	\begin{enumerate}
		\item for each vertex of $V$, a total order $<_{\hat\FF,\hout(i)}$ on the outgoing edges $\hout(i)$, 
		\item for each vertex of $V$, a total order $<_{\hat\FF,\hinn(i)}$ on the incoming edges $\hinn(i)$, and
		\item a total order $<_{\hat\FF,\src}$ on $V_X$.
	\end{enumerate}
	We will notate the vertices of $V_X$ as $\s_1<_{\hat\FF,\src}\dots<_{\hat\FF,\src}\s_{S_\a}$.
	In our figures, we will depict framings on augmentations through the embeddings and labels on vertices of $V_X$ (see Remark~\ref{remk:embedding}).
	We say that $(\hat G,\hat\a,\hat\FF)$ is a \emph{framed augmentation} of $(G,\a)$.
\end{defn}

Note that we only need orders $<_{\hat\FF,\hout(i)}$ and $<_{\hat\FF,\hinn(i)}$ at the vertices of $V$, since each vertex of $V_X$ and $V_Y$ is incident to only one edge.

We now define pairwise compatibility of routes of an augmentation after Definitions~\ref{defn:compat1} and~\ref{defn:compat2} in the conservationist case.

\begin{defn}\label{defn:compat1B}
	Let $(\hat G,\hat\a,\hat\FF)$ be a framed augmentation of $(G,\a)$. Define as in Definition~\ref{defn:compat1} the \emph{post-$i$ order} $<_i^+$ on the paths from a vertex $i$ of $\hat G$ to a sink, and the \emph{pre-$i$ order} $<_i^-$ on the paths from a source to a vertex of $\hat G$. If $p$ and $q$ are routes with a shared vertex $i\in\hat V$ then we may write $p<_i^-q$, $p>_i^-q$, or $p=_i^+q$ as in Definition~\ref{defn:compat1}; similarly we have the total preorder $<_i^+$ on routes passing through $i$.

	If $p$ and $q$ are routes $i$ is a vertex contained in both $p$ and $q$, then the routes $p$ and $q$ are \emph{incompatible} at $i$ if, without loss of generality, $p<_i^-q$ and $q<_i^+p$. Two routes $p$ and $q$ are \emph{incompatible} if they are incompatible at any shared vertex, otherwise they are \emph{compatible}.
	A \emph{route-clique} of $(\hat G,\hat\a,\hat\FF)$ is a set of pairwise compatible routes.
\end{defn}

We now define layerings (Definition~\ref{defn:layering}), layering-simplices (Definition~\ref{defn:layering-simplex}), and polyhedral layering-simplices of a framed augmentation of $(G,\a)$.

\begin{defn}[{Definition~\ref{defn:layering}}]
	\label{defn:layeringB}
	Let $(\hat G,\hat\a,\hat\FF)$ be a framed augmentation of $(G,\a)$.
	A \emph{layering} $L$ of $(\hat G,\hat\a,\hat\FF)$ is a set of routes such that
	\begin{enumerate}
		\item for every source vertex $i$ of $\hat E$, there is precisely $\hat\a_i=1$ route of $L$ beginning at $i$, and
		\item for every sink vertex $j$ of $\hat E$, there are precisely $|\hat\a_j|$ routes of $L$ ending at $j$.
	\end{enumerate}
	We will index a layering $L$ as $L=\{L_1,\dots,L_{S_\a}\}$, where for $i\in[S_\a]$ the route $L_i$ begins at $\s_i$.
	The set of layerings of $(\hat G,\hat\a,\hat\FF)$ is notated $\textup{Layerings}(\hat G,\hat\a,\hat\FF)$.

	Let $L$ and $M$ be distinct layerings of $(\hat G,\hat\a,\hat\FF)$. Choose $i\in[S_\a]$ maximal such that $L_i\neq M_i$ and suppose without loss of generality that $L_i<_{\s_i}+M_i$. We say that $L<_\src^+M$. It is immediate that $<_\src^+$ is a total order on layerings, which we call the \emph{post-source order} on layerings.
\end{defn}

\begin{lemma}\label{lem:aug-layer-xy}
	If $L$ is a layering of a framed augmentation $(\hat G,\hat\a,\hat\FF)$, then all routes of $L$ begin with an edge of $E_X$ and end with an edge of $E_Y$.
\end{lemma}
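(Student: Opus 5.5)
We reduce the statement to identifying the sources and sinks of $\hat G$ and then use the counting conditions in the definition of a layering. A route of $\hat G$ starts at a source of $\hat G$ and ends at a sink of $\hat G$. So it suffices to show two facts. First, every route of $L$ starts at a vertex of $V_X$, whose unique incident edge is then the first edge of the route and lies in $E_X$. Second, every route of $L$ ends at a vertex of $V_Y$, whose unique incident edge is the last edge and lies in $E_Y$.

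For sources we count routes. By Definition~\ref{defn:layeringB}(1), each source $i$ of $\hat G$ is the start of exactly $\hat\a_i=1$ route of $L$. Hence every source of $\hat G$ has $\hat\a_i=1$. By Definition~\ref{defn:cover}(4), $\hat\a$ vanishes on $V$ and is negative on $V_Y$, so every source of $\hat G$ lies in $V_X$. (Sources in $V$ would have $\hat\a=0$, contradicting the requirement of exactly one route there. Strictly, the definition presupposes that all sources have netflow $1$, and for a vertex with $\hat\a_i=0$ this forces no route to start there.)

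The cleanest uniform argument is the following. Since $\J(L)=\sum_{p\in L}\I(p)$ and each route contributes $+1$ of outflow at its start and $-1$ at its end, conditions (1) and (2) say that the net flow of $\J(L)$ at each source and sink agrees with $\hat\a$. Therefore any route starting at a vertex $v\in V$ that is a source of $\hat G$ would give $v$ positive netflow in $\J(L)$, contradicting $\hat\a_v=0$. Concretely, a source in $V$ has no routes beginning at it by (1), because $\hat\a_v=0$ routes are required. The same reasoning applies to sinks: a sink $v\in V$ has $|\hat\a_v|=0$ routes ending at it by (2).

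Sources in $V_Y$ cannot occur as starting points either. A vertex of $V_Y$ is the head of its outflow edge (outflow edges go from $V$ to $V_Y$, as the contraction description in Lemma~\ref{lem:coverint} indicates), so it is not a source. Symmetrically, a vertex of $V_X$ is the tail of its inflow edge, so it is not a sink.

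Thus each route of $L$ starts in $V_X$ and ends in $V_Y$. Since each vertex of $V_X\sqcup V_Y$ is incident to exactly one edge (Definition~\ref{defn:cover}(3)), the first edge lies in $E_X$ and the last in $E_Y$.

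The only delicate point is reading the orientation of outflow edges correctly and handling sources and sinks lying in $V$. The latter is settled by the counting conditions, with $\hat\a=0$ on $V$.
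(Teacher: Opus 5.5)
Your argument is correct and is essentially the paper's: routes start at sources and end at sinks, and the counting conditions of a layering, together with $\hat\a=0$ on $V$, rule out any route of $L$ starting or ending at a vertex of $V$. Your orientation argument for $V_Y$ (resp.\ $V_X$) is more careful than the paper's, which simply asserts $\hat\a_i=0$ on all of $V\cup V_Y$. You should delete the stray claim that every source of $\hat G$ has $\hat\a_i=1$: it is false in general (e.g.\ a zero-netflow source of $G$ stays a source of $\hat G$), and your later counting paragraph already supersedes it.
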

\begin{proof}
	We will show that no route begins with a vertex of $\hat V\backslash V_X=V\cup V_Y$ and that no route ends with a vertex of $\hat V\backslash V_Y=V\cup V_X$.
	Choose a vertex $i\in V\cup V_Y$. Then $\hat\a_i=0$ by definition of $\hat\a_i$, hence $L$ contains $\hat\a_i=0$ routes beginning with $V$. One argues symmetrically that there are no routes ending with a vertex of $V\cup V_X$.
\end{proof}

If $\L=\{L^1,\dots,L^m\}$ is a set of layerings, let
$\routes(\L):=\{L^i_j\ :\ i\in[m],\ j\in[S_\a]\}=\cup_{i=1}^mL^i$ be the underlying set of routes of $\L$.

\begin{defn}[{Definition~\ref{defn:layering-simplex}}]
	\label{defn:layering-simplexB}
	Let $(\hat G,\hat\a,\hat\FF)$ be a framed augmentation of $(G,\a)$.
	Let $\L=\{L^1,\dots,L^m\}$ be a set of layerings of $(\hat G,\hat\a,\hat\FF)$ indexed so that $L^1<_\src^+\dots<_\src^+L^m$. We say that $\L$ is a \emph{layering-simplex} of $(G,\a,\FF)$ if
	\begin{enumerate}
		\item the set $\routes(\L)$ is a route-clique,
		\item for any $i\in[m]$, we have 
			\[L^i=\min_{<_\src^+}\left\{L\in\text{Layerings}(\hat G,\hat\a,\hat\FF)\ :\ \routes(\{L\})\subseteq\routes(\{L^{i},\dots,L^m\})\right\}\textup{, and}\]
	\end{enumerate}
	If $\L$ is a layering-simplex, then define the \emph{polyhedral layering-simplex}
	\[
		\D_1(\L):=\text{conv}
		\{
			\hat\J(L)\ :\ L\in\L
		\}.
	\]
\end{defn}

We will finally use layering-simplices to get at simplices of a triangulation of $\F_G(\a)$.
To do this, we will associate a route
$p$ of an augmentation $(\hat G,\hat\a)$ of $(G,\a)$ with its \emph{deaugmented indicator vector} $\check\I(p):E\mapsto\mathbb R_{\geq0}$ defined to send every edge of $E$ used by $p$ to $1$ and all other edges of $E$ to 0. Note that this is merely the projection of the usual indicator vector $\I(p)\in\mathbb R^{\hat E}$ to $\mathbb R^E$ (recall that this projection is an integral equivalence from $\F_{\hat G}(\hat\a)$ to $\F_G(\a)$ by Lemma~\ref{lem:coverint}).
Accordingly, given a layering $L$ of $(\hat G,\hat\a,\hat\FF)$ we define its \emph{deaugmented indicator vector}
\[
	\check{\J}(L):=\sum_{p\in L}\check\I(p).
\]
Given a layering-simplex $\L$ of $(\hat G,\hat\a,\hat\FF)$, define  its \emph{deaugmented polyhedral layering-simplex}
\[
	\check{\D}_1(\L):=\text{conv}\{\check\J(L)\ :\ L\in\L\}.
\]
We now phrase the variant of Corollary~\ref{cor:triangulation} for general DAGs via framed augmentations.

\begin{thm}[{Corollary~\ref{cor:triangulation}}]
	\label{thm:main-aug}
	Let $(G,\a)$ be a framed DAG and let $(\hat G,\hat\a,\hat\FF)$ be a framed augmentation of $(G,\a)$. The \emph{framing triangulation}
	\[
		\mathcal T(\hat G,\hat\a,\hat\FF):=\{\check\D_1(\L)\ :\ \L\in\textup{Layerings}(\hat G,\hat\a,\hat\FF)\}
	\]
	is a unimodular lattice triangulation of $\F_G(\a)$.
\end{thm}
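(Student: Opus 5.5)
The plan is to reduce Theorem~\ref{thm:main-aug} to Corollary~\ref{cor:triangulation} by showing that a framed augmentation is simply a conservationist framed tuple. Then we transport the resulting triangulation along the integral equivalence of Lemma~\ref{lem:coverint}.

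First I check that $(\hat G,\hat\a)$ is conservationist. The first point is to identify the sources and sinks of $\hat G$ with $V_X$ and $V_Y$. Every vertex of $V_X$ has netflow $1$ and is the tail of its unique inflow edge, so it is a source. Every vertex of $V_Y$ has negative netflow and is the head of its unique outflow edge, so it is a sink. A subtlety is that Definition~\ref{defn:cover} also allows vertices of $V$ to be sources or sinks of $\hat G$ (with $\hat\a_i=0$). To handle this, I would first pass to the subgraph of $\hat G$ supporting flow, equivalently restrict to edges used by some route from $V_X$ to $V_Y$, in the spirit of Lemma~\ref{lem:aug-layer-xy}. Edges not on such routes carry zero flow in every $\hat\a$-flow, and no layering uses them. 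Deleting them therefore changes neither the flow polytope (up to the obvious coordinate embedding), nor the layerings, nor the route-cliques, nor $<_\src^+$. After this pruning the sources are exactly $V_X$ with $\hat\a=1$, the sinks are exactly $V_Y$ with $\hat\a<0$, and internal vertices have $\hat\a=0$, so $\hat\a$ is conservationist.

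Second, the framing $\hat\FF$ of Definition~\ref{defn:framingB} restricts to a framing in the sense of Definition~\ref{defn:framing}. The orders at vertices of $V$ are given. Each source in $V_X$ has a single outgoing edge, so its out-order is trivial, and $<_{\hat\FF,\src}$ orders $V_X$. The post/pre orders, compatibility, route-cliques, layerings (Definition~\ref{defn:layeringB}), the order $<_\src^+$, and layering-simplices (Definition~\ref{defn:layering-simplexB}) are then literally those of the conservationist $(\hat G,\hat\a,\hat\FF)$. The one thing to confirm is that the orders at vertices of $V$ which are not internal in the pruned graph do not matter.

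Corollary~\ref{cor:triangulation} then gives that $\{\D_1(\L)\}$, with $\D_1(\L)=\mathrm{conv}\{\J(L):L\in\L\}$, is a unimodular lattice triangulation of $\F_{\hat G}(\hat\a)$. Finally, apply the projection $\phi:F\mapsto F|_E$. By Lemma~\ref{lem:coverint} this is an integral equivalence $\F_{\hat G}(\hat\a)\to\F_G(\a)$. It is affine and bijective on the polytope, and it satisfies $\phi(\J(L))=\check\J(L)$, so $\phi(\D_1(\L))=\check\D_1(\L)$. An affine bijection carries simplices to simplices, faces to faces, intersections to intersections, and covers to covers. Because $\phi$ preserves the lattice on affine spans, it also preserves normalized volume, so unimodularity transfers as well.

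The main obstacle is the first step, namely making rigorous that the degenerate sources and sinks inside $V$ (and the pruning of unusable edges) are harmless. I would handle it by noting that Lemma~\ref{lem:aug-layer-xy} forces every layering route to go from $V_X$ to $V_Y$, and that every $\hat\a$-flow is a nonnegative combination of such routes by Proposition~\ref{prop:route-clique-combinations} applied to the pruned graph.
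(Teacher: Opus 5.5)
Your proposal is correct and follows essentially the same route as the paper. The paper likewise passes to the flow-supporting subgraph $(\hat G^\fs,\hat\a^\fs)$, shows it is conservationist, identifies its layering-simplices with those of $(\hat G,\hat\a,\hat\FF)$, applies Corollary~\ref{cor:triangulation}, and transports the triangulation along the integral equivalence $F\mapsto\check F$ (zero-extension followed by restriction to $E$).

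One small caveat: your ``equivalently'' is not literally true. An edge can lie on a route from $V_X$ to $V_Y$ and still carry zero flow in every $\hat\a$-flow. This does not affect the argument, since either pruning yields a conservationist tuple with the same flows and layerings whenever $\F_G(\a)\neq\emptyset$, and the empty case is trivial.
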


Similarly, to Corollary~\ref{cor:card} in the conservationist case, the following follows because all maximal cells of a triangulation of a polytope $P$ are of dimension $\dim(P)$.
\begin{cor}\label{cor:cardB}
	The cardinality of every maximal layering-simplex of a framed augmentation $(\hat G,\hat\a,\hat\FF)$ of $(G,\a)$ is $\dim(\F_G(\a))+1$.
\end{cor}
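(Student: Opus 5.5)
The statement is Corollary~\ref{cor:cardB}. The plan is to combine Theorem~\ref{thm:main-aug} with the fact that every maximal cell of a triangulation of a polytope has full dimension. The argument follows the conservationist Corollary~\ref{cor:card} almost verbatim; the only extra step is to make precise that maximal layering-simplices correspond to maximal cells of $\mathcal T(\hat G,\hat\a,\hat\FF)$.

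First I would record that for any layering-simplex $\L$, the set $\check\D_1(\L)$ is a simplex with exactly $|\L|$ vertices $\{\check\J(L)\ :\ L\in\L\}$. To see this, note that $(\hat G,\hat\a,\hat\FF)$, with the sources $V_X$ and sinks $V_Y$ and the framing data, is a conservationist framed tuple. Indeed, $\hat\a$ is $1$ on $V_X$, negative on $V_Y$, and $0$ on $V$. Moreover, Definitions~\ref{defn:layeringB} and~\ref{defn:layering-simplexB} coincide with Definitions~\ref{defn:layering} and~\ref{defn:layering-simplex} for this tuple. Corollary~\ref{cor:triangulation} then says $\D_1(\L)$ is a simplex on the affinely independent points $\hat\J(L)$. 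The projection $F\mapsto F|_E$ is an integral equivalence by Lemma~\ref{lem:coverint}, so it is an affine bijection on $\F_{\hat G}(\hat\a)$. It therefore preserves affine independence, and it sends $\D_1(\L)$ to $\check\D_1(\L)$. Hence $\dim\check\D_1(\L)=|\L|-1$, and the faces of $\check\D_1(\L)$ are exactly the $\check\D_1(\L')$ with $\L'\subseteq\L$.

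Next I would check that maximality transfers. If $\L$ is a maximal layering-simplex, then $\check\D_1(\L)$ is not a proper face of any cell of $\mathcal T(\hat G,\hat\a,\hat\FF)$. Suppose instead that $\check\D_1(\L)\subsetneq\check\D_1(\K)$ as a face. Then by the face description above, $\check\D_1(\L)=\check\D_1(\K')$ for some $\K'\subsetneq\K$. Since distinct layerings have distinct integer flows (Lemma~\ref{lem:layering-int-flow} transported through the integral equivalence), equal vertex sets force $\L=\K'\subsetneq\K$. This contradicts maximality.

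Finally, in any triangulation of a polytope $P$, a cell that is not a face of another cell has dimension $\dim P$. The reason is that the union of the cells is $P$, so the full-dimensional cells cover a dense subset. A lower-dimensional cell not contained in a full-dimensional one would contain a relatively interior point whose neighbourhood in $P$ meets the interior of some full-dimensional cell. By condition (3) of Definition~\ref{defn:triangulation}, it would then be a face of that cell. Hence $|\L|-1=\dim\F_G(\a)$. In the empty case there are no layerings, the only layering-simplex is $\emptyset$, and the convention $\dim=-1$ gives $0$. The only mildly delicate step is this standard fact about triangulations; I would cite it as standard, exactly as the paper does for Corollary~\ref{cor:card}, rather than reprove it.
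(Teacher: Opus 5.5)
Your overall strategy is the paper's: combine Theorem~\ref{thm:main-aug} with the fact that maximal cells of a triangulation are full-dimensional. Your extra bookkeeping is what the paper leaves implicit: the points $\check\J(L)$, $L\in\L$, are affinely independent, the faces of $\check\D_1(\L)$ are the $\check\D_1(\L')$ with $\L'\subseteq\L$, and maximality of $\L$ transfers to the cell. However, the step you use to justify this bookkeeping is false as stated, because $(\hat G,\hat\a,\hat\FF)$ is in general \emph{not} a conservationist framed tuple. Knowing that $\hat\a$ is $1$ on $V_X$, negative on $V_Y$ and $0$ on $V$ is not enough. Being conservationist also requires that the sources of $\hat G$ be exactly the vertices of netflow $1$, and the sinks exactly those of negative netflow. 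A vertex $v\in V$ with $\a_v=0$ and no incoming edge in $G$ receives no inflow edge, so it is a source of $\hat G$ with $\hat\a_v=0$; for example, take $G$ with edges $1\to2$, $3\to2$ and $\a=(1,-1,0)$. Likewise, a vertex of $V$ with $\a_v\geq0$ and no outgoing edge is a sink of $\hat G$ with netflow $0$. In such cases $(\hat G,\hat\a)$ violates the conservationist conditions and $<_{\hat\FF,\src}$ does not order all sources of $\hat G$. So neither Corollary~\ref{cor:triangulation} nor Lemma~\ref{lem:layering-int-flow} can be invoked for $(\hat G,\hat\a,\hat\FF)$ directly.

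This is exactly why the paper passes to the flow-supporting subgraph, and the repair is to replace your first step with that reduction. The tuple $(\hat G^\fs,\hat\a^\fs,\hat\FF^\fs)$ is conservationist by Lemma~\ref{lem:aug-cons}. By Proposition~\ref{prop:f}, its layering-simplices are exactly those of the augmentation, since every route of a layering uses only flow-supporting edges ($\hat\J(L)$ is an $\hat\a$-flow). The integral equivalence $F\mapsto\check F$ of Lemma~\ref{lem:aug-uni} carries $\D_1(\L)$ onto $\check\D_1(\L)$. Apply Corollary~\ref{cor:triangulation} and Lemma~\ref{lem:layering-int-flow} to the conservationist tuple and transport along this injective affine map. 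That gives the affine independence, the face description, and the global injectivity of $\check\J$ on layerings, after which your maximality-transfer argument goes through unchanged.

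One small tightening in the final step: condition (3) of Definition~\ref{defn:triangulation} needs the relative-interior point $x$ of the lower-dimensional cell $\sigma$ to actually lie in some full-dimensional cell $\tau$, not merely to have neighbourhoods meeting one. This holds because the full-dimensional cells are finitely many closed sets whose union is dense in $P$. Then $\sigma\cap\tau$ is a face of $\sigma$ containing $x$, which forces $\sigma\subseteq\tau$.
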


\begin{example}\label{ex:augpoly}
	Figure~\ref{fig:augpoly} shows one of the augmentations $(\hat G,\hat\a)$ from Figure~\ref{fig:aug}. Equip this augmentation with the framing $\hat\FF$ induced by the embedding (Remark~\ref{remk:embedding}). The right shows the flow polytope $\F_G(\a)$ with its integer points labelled by layerings, and the framing triangulation is drawn using dotted lines. 
	The combinatorics of layerings of $(\hat G,\hat\a,\hat\FF)$ is the same as that discussed in Example~\ref{ex:consexample} (discussing Figure~\ref{fig:cons}), so we will not repeat it here.
	\begin{figure}
		\centering
		\def\svgscale{.38}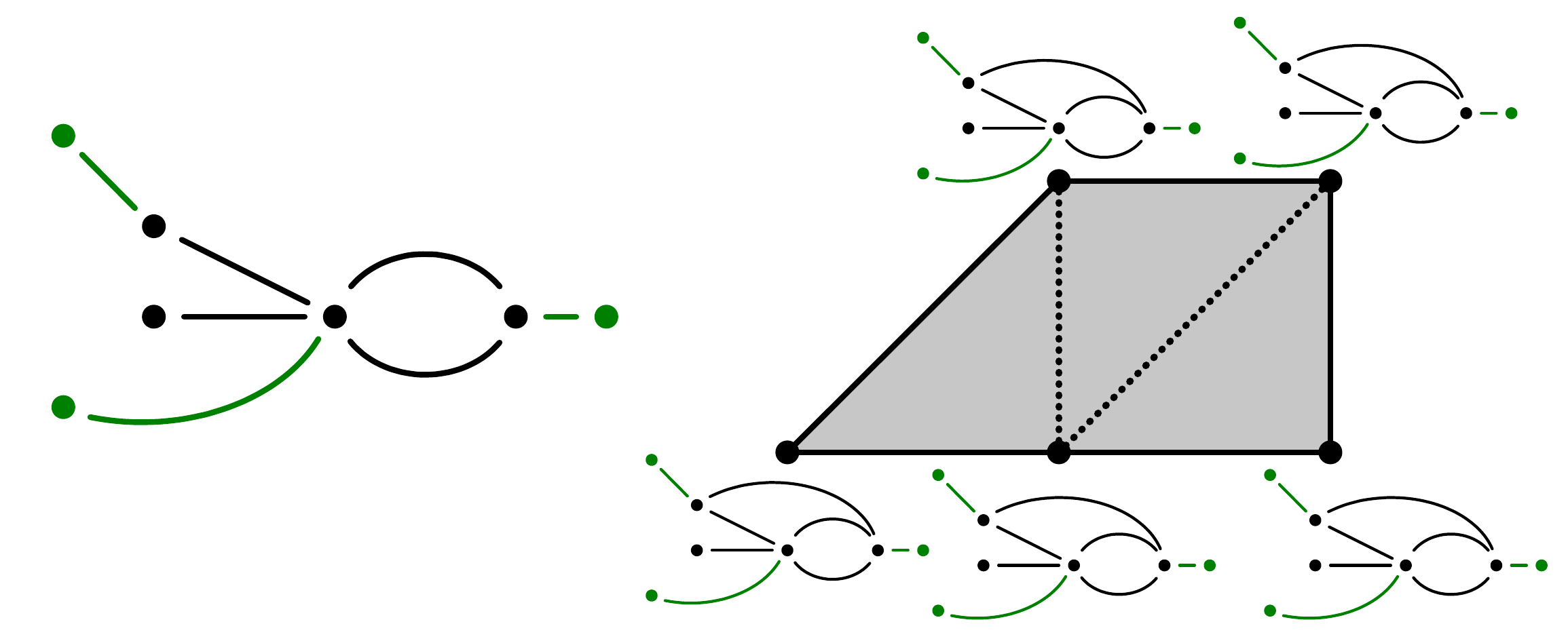
		\caption{The framing triangulation of an augmentation from Figure~\ref{fig:aug}.}
		\label{fig:augpoly}
	\end{figure}
\end{example}

We finally observe that if $G$ is a DAG with conservationist netflow vector $\a$, then the framing triangulations of $\F_G(\a)$ via the conservationist setting of Corollary~\ref{cor:triangulation} are the same as the framing triangulations via the augmentation setting of Theorem~\ref{thm:main-aug}:

\begin{prop}\label{prop:same}
	Let $G$ be a DAG with conservationist netflow vector $\a$. Then
	\begin{align*}
		&\{\mathcal T(G,\a,\FF)\ :\ \FF\text{ is a framing of the conservationist }(G,\a)\}
		\\
		=&\{\mathcal T(\hat G,\hat\a,\hat\FF)\ :\ (\hat G,\hat\a,\hat\FF)\text{ is a framed augmentation of }(G,\a)\}.
	\end{align*}
\end{prop}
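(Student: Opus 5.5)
The plan is to prove the two inclusions separately. In both directions we pass between a framing of the conservationist $(G,\a)$ and a framing of an augmentation. We then check that routes, compatibility, layerings, the post-source order $<_\src^+$ and (deaugmented) indicator vectors all correspond, so that Definitions~\ref{defn:layering-simplex} and~\ref{defn:layering-simplexB} select the same sets of integer points of $\F_G(\a)$.

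For ``$\subseteq$'', start from a framing $\FF$ of the conservationist $(G,\a)$ and build the \emph{minimal} augmentation. It has one inflow edge $s'_i\to i$ into each source $i$ of $G$ (possible since $\a_i=1$), and one outflow edge $j\to j'$ out of each sink $j$, with $\hat\a_{j'}=\a_j$. Define $\hat\FF$ as follows.
\begin{itemize}
\item At an original source $i$, take $<_{\hat\FF,\hout(i)}:=<_{\FF,\out(i)}$ (here $\hinn(i)$ is a singleton).
\item At internal vertices, copy $\FF$.
\item At an original sink $j$, let the order on $\hinn(j)$ be arbitrary (here $\hout(j)$ is a singleton).
\item Order the inflow vertices by $s'_i<_{\hat\FF,\src}s'_k$ if and only if $i<_{\FF,\src}k$.
\end{itemize}
Routes of $\hat G$ biject with routes of $G$ by adding or deleting the inflow and outflow edge. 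Two routes through a sink $j$ satisfy $p=_j^+q$, and two routes through an original source or a new vertex agree on one side. Hence incompatibility can only occur at internal vertices of $G$, where the orders are unchanged. Post-$s'_i$ orders agree with post-$i$ orders, so layerings, $<_\src^+$ and layering-simplices correspond, and $\check\J$ of the extended layering equals $\J$. It follows that $\mathcal T(\hat G,\hat\a,\hat\FF)=\mathcal T(G,\a,\FF)$.

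For ``$\supseteq$'', take an arbitrary framed augmentation $(\hat G,\hat\a,\hat\FF)$. Every original vertex is internal in $\hat G$, because sources of $G$ get an inflow edge and sinks get an outflow edge. So $(\hat G,\hat\a)$ is itself conservationist, and $\hat\FF$ is a framing of it in the sense of Definition~\ref{defn:framing}. Define a framing $\FF$ of $G$ as follows.
\begin{itemize}
\item At a source $i$ of $G$ (which has a unique inflow edge $s_k\to i$), take $<_{\FF,\out(i)}:=<_{\hat\FF,\hout(i)}$.
\item Order sources by the order of their inflow vertices.
\item At internal vertices, copy $\hat\FF$.
\end{itemize}
Truncation $\tau$ (deleting the two end edges) sends routes of $\hat G$ to routes of $G$ and preserves compatibility at internal vertices. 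It also sends route-cliques to route-cliques and is weakly order-preserving for the post-source orders.

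Next we show that the $\hat G$-layerings inside a $\hat G$-route-clique $\R$ are exactly the lifts of the $G$-layerings inside $\tau(\R)$. Compatibility at an original sink $j$ forces routes of $\R$ ending at $j$ to be assigned to the outflow edges of $j$ monotonically in $<_j^-$, and the capacities $|\hat\a_{j'}|$ then determine the lift uniquely. This is consistent with Lemma~\ref{lem:coverint} and Lemma~\ref{lem:layering-int-flow}, which give a bijection between integer flows of the two polytopes. With this in hand, both sides can be compared via the algorithm of Theorem~\ref{thm:triangulation}. The positive route-clique combination of a flow on $\hat G$ pushes forward under $\tau$ (summing coefficients) to the one on $G$, by uniqueness in Proposition~\ref{prop:route-clique-combinations}. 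The remaining point is that at each step the $<_\src^+$-minimal $\hat G$-layering supported on $\R_j$ maps to the $<_\src^+$-minimal $G$-layering supported on $\tau(\R_j)$. This makes the two algorithms produce the same simplices of integer points, hence equal triangulations.

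The main obstacle is this last minimality comparison. Truncation can collapse two distinct $\hat G$-routes from the same source, differing only in their final outflow edge, so $<_\src^+$ on $\hat G$-layerings is only weakly compatible with $<_\src^+$ on $G$-layerings, and a tie broken differently could change which layering is minimal. I would first try to show that such ties never matter, using the monotone assignment at sinks to argue that the lift is order-compatible. If that fails, I would instead choose the comparison more cleverly, for example by refining $\FF$, or by routing through an intermediate augmentation that splits each sink, so that the two post-source orders agree on the relevant layerings.
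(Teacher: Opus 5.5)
\paragraph{Verdict.} Your ``$\subseteq$'' half is correct and is exactly the paper's argument: one outflow edge per sink, orders copied, and an arbitrary order on $\hinn(j)$ at each sink. The ``$\supseteq$'' half has a genuine gap, and the step you flag cannot be completed.

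\paragraph{The natural framing does not work.} Once an augmentation splits a sink, a route-clique $\R$ of $\hat G$ need not contain the (forced, monotone) lift of a $G$-layering lying in $\tau(\R)$, and the minimal layerings need not correspond. Take the following example.
\begin{itemize}
\item Sources $u_1,u_2$, a sink $j$ with $\a_j=-2$, and parallel edges $p_i<q_i$ from $u_i$ to $j$. Then $\F_G(\a)$ is a unit square.
\item Split $j$ into outflow edges $y_1<y_2$, each of netflow $-1$.
\item Set $p_1<p_2<q_2<q_1$ in $<_{\hat\FF,\hinn(j)}$ and $s_1<s_2$.
\end{itemize}
For the natural $\FF$ (yours, and the paper's), $\{\{p_1,p_2\},\{q_1,q_2\}\}$ is a layering-simplex of $(G,\a,\FF)$. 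However, the lifts $\{p_1y_1,p_2y_2\}$ and $\{q_2y_1,q_1y_2\}$ contain $p_2y_2$ and $q_2y_1$, which are incompatible at $j$. So the two triangulations use different diagonals of the square.

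At the center flow, $\R=\{p_1y_1,p_2y_1,q_2y_2,q_1y_2\}$ shows each of your intermediate claims failing:
\begin{itemize}
\item $\R$ does not contain the lift of $\{p_1,p_2\}$.
\item The $<_\src^+$-minimal layering in $\R$ truncates to $\{q_1,p_2\}$, whereas the minimal $G$-layering in $\tau(\R)$ is $\{p_1,p_2\}$.
\item $\tau$ actually reverses $<_\src^+$ on these two layerings, so the order is not even weakly preserved.
\end{itemize}
Hence ``ties never matter'' is false. The paper's proof uses this same $\FF$ and simply asserts the equality as immediate, so it does not fill this gap either.

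\paragraph{No other framing works: the inclusion itself fails.} Give $u_1$ three parallel edges $e_1<e_2<e_3$ and $u_2$ three parallel edges $f_1<f_2<f_3$ to $j$. Then $\F_G(\a)\cong\Delta_2\times\Delta_2$, and layerings are pairs $(e_a,f_b)$.

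For any framing $\FF'$ of the conservationist $(G,\a)$ there are no internal vertices, so every set of routes is a route-clique. Condition (2) of Definition~\ref{defn:layering-simplex} then reduces to: $\L$ is a chain in the product of the orders $<_{\FF',\out(u_1)}$ and $<_{\FF',\out(u_2)}$. This is a staircase triangulation, so every maximal simplex contains both corners of that product.

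Now split $j$ as before, with $e_1<f_1<e_2<f_2<e_3<f_3$ in $<_{\hat\FF,\hinn(j)}$ and $s_1<s_2$. A direct check shows that
\[S=\{(e_2,f_1),(e_3,f_1),(e_1,f_2),(e_1,f_3),(e_2,f_3)\}\]
and
\[S'=\{(e_3,f_1),(e_3,f_2),(e_1,f_3),(e_2,f_3),(e_3,f_3)\}\]
are layering-simplices of the augmentation: the lifts are monotone at $j$, and condition (2) holds at every step.

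Suppose both were chains for some $\FF'$. Then both corners lie in $S\cap S'=\{(e_3,f_1),(e_1,f_3),(e_2,f_3)\}$. Up to reversing both orders, this forces one of two cases:
\begin{itemize}
\item $e_3<e_2<e_1$ and $f_1<f_2<f_3$, where $(e_1,f_2)$ and $(e_2,f_3)$ in $S$ are incomparable; or
\item $e_3<e_1<e_2$ and $f_1<f_2<f_3$, where $(e_2,f_1)$ and $(e_1,f_2)$ in $S$ are incomparable.
\end{itemize}
So this $\mathcal T(\hat G,\hat\a,\hat\FF)$ equals $\mathcal T(G,\a,\FF')$ for no framing $\FF'$.

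\paragraph{What survives.} Your argument (and the paper's) proves the restricted equality in which only augmentations with a single outflow edge at each sink are allowed. There $\tau$ is a bijection on routes, and all the structure matches.
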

In other words, the framing triangulations from framings of a conservationist $(G,\a)$ are the same as the framing triangulations from framed augmentations of $(G,\a)$.
\begin{proof}
	Let $(\hat G,\hat\a,\hat\FF)$ be a framed augmentation of $(G,\a)$. Define a framing $\FF$ on $(G,\a)$ in the sense of Definition~\ref{defn:framing} such that $<_{\FF,\inn(i)}=<_{\hat\FF,\hinn(i)}$ for all internal vertices of $G$ and $<_{\FF,\out(i)}=<_{\hat\FF,\hout(i)}$ for all non-sink vertices, and such that the order $<_{\FF,\src}$ is induced from the order $<_{\hat\FF,\hat\src}$ via identifying each outflow vertex with the unique source of $G$ it is adjacent to. It is then immediate by the definitions that $\mathcal T(G,\a,\FF)=\mathcal T(\hat G,\hat\a,\hat\FF)$.

	On the other hand, let $\FF$ be a framing of $(G,\a)$ in the sense of Definition~\ref{defn:framing}.
	Since $(G,\a)$ is conservationist, the sinks of $G$ are precisely those vertices with negative netflow.
	Choose the augmentation $(\hat G,\hat\a)$ such that every sink has exactly one outflow edge.
	If $i$ is a non-sink vertex of $G$, define $<_{\hat\FF,\hout(i)}:=<_{\FF,\out(i)}$; if $i$ is a sink of $G$ then define $<_{\hat\FF,\hout(i)}$ as the trivial order on the single outgoing edge of $i$ in $\hat G$. Similarly define the orders $<_{\hat\FF,\hinn(i)}$.
	As above, the order $<_{\FF,\src}$ induces a total order $<_{\hat\FF,\hat\src}$ via identifying each outflow vertex with the unique source of $G$ it is adjacent to. Then $\hat\FF$ is a framing of the augmentation $(\hat G,\hat\a)$ in the sense of Definition~\ref{defn:framingB} and it is immediate that $\mathcal T(G,\a,\FF)=\mathcal T(\hat G,\hat\a,\hat\FF)$.
\end{proof}

The rest of this section will be used to prove Theorem~\ref{thm:main-aug} by using Corollary~\ref{cor:triangulation}.

\subsection{Proving Theorem~\ref{thm:main-aug}}

Let $G=(V,E)$ be a DAG with netflow vector $\a$. We say that an edge $e\in E$ is \emph{flow-supporting} if there exists an $\a$-flow $F$ with $F(e)>0$. 
We say that $(G,\a)$ is \emph{flow-supporting} if every edge of $G$ is flow-supporting.

Let the \emph{flow-supporting subgraph} of an arbitrary $(G,\a)$ be the tuple $(G^\fs,\a^\fs)$, where $G^\fs=(V^\fs,E^\fs)$ is the subgraph of $G$ induced by the flow-supporting edges of $G$ and $\a^\fs=\a|_{V^\fs}$.

\begin{lemma}\label{lem:fs}
	If $F\in\F_G(\a)$, then the projection $F^\fs$ of $F\in\mathbb R^E$ to $\mathbb R^{E^\fs}$ is in $\F_{G^\fs}(\a^\fs)$, and this projection $(-)^\fs$ is an integral equivalence from $\F_G(\a)$ to $\F_{G^\fs}(\a^\fs)$.
\end{lemma}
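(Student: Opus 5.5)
The proof will show three things in order: that $(-)^\fs$ sends $\F_G(\a)$ into $\F_{G^\fs}(\a^\fs)$, that it is a bijection between these polytopes, and that both it and its inverse preserve the relevant lattices. The key observation is that every point $F\in\F_G(\a)$ vanishes on each non-flow-supporting edge, by the definition of flow-supporting. So $F$ is completely determined by its restriction to $E^\fs$.

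\emph{Step 1: the image lies in $\F_{G^\fs}(\a^\fs)$.} Take $F\in\F_G(\a)$ and a vertex $i\in V^\fs$. In the conservation equation at $i$ in $G$, every term from an edge outside $E^\fs$ is zero. The remaining terms are exactly those from edges of $G^\fs$ incident to $i$, since $G^\fs$ is the subgraph induced by the flow-supporting edges. Hence $F^\fs$ satisfies conservation at $i$ with netflow $\a_i=\a^\fs_i$, and it is nonnegative. So $F^\fs\in\F_{G^\fs}(\a^\fs)$.

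\emph{Step 2: bijectivity.} Injectivity is immediate from the key observation. For surjectivity, take $F'\in\F_{G^\fs}(\a^\fs)$ and let $F$ be its extension by zero to $E$. Conservation holds at vertices of $V^\fs$ by the reverse of the argument in Step 1.

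\emph{Step 3: vertices outside $V^\fs$ (the main obstacle).} For $i\in V\setminus V^\fs$, the extension $F$ has zero flow on every edge at $i$, so conservation at $i$ requires $\a_i=0$. I will derive this from the hypothesis that $\F_G(\a)$ is nonempty, so some $\a$-flow $F_0$ exists. Every edge at $i$ is non-flow-supporting, so $F_0$ vanishes on all of them, and the conservation equation for $F_0$ at $i$ gives $\a_i=0$. If $\F_G(\a)$ is empty, I must also check that $\F_{G^\fs}(\a^\fs)$ is empty. In that case no edge is flow-supporting, so $G^\fs$ has no edges. Reading ``induced by the edges'' as having no vertices, both polytopes are empty and the claim is vacuous; I will state this convention explicitly in the proof.

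\emph{Step 4: integral equivalence.} The map $(-)^\fs$ is the restriction of a linear coordinate projection. Its inverse on the polytope is extension by zero, also linear. Both maps send integer vectors to integer vectors, so they restrict to mutually inverse bijections between $\mathbb Z^E\cap\text{aff}(\F_G(\a))$ and $\mathbb Z^{E^\fs}\cap\text{aff}(\F_{G^\fs}(\a^\fs))$. Indeed, every point of $\text{aff}(\F_G(\a))$ vanishes off $E^\fs$, being an affine combination of points that do, so extension by zero and projection are inverse on the affine spans as well. This is exactly integral equivalence in the sense of the background section.
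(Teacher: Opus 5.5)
Your argument is correct whenever $\F_G(\a)\neq\emptyset$, and it is a careful unpacking of what the paper dismisses as ``immediate from the definitions.'' Every $\a$-flow vanishes on non-flow-supporting edges, so restriction to $E^{\fs}$ and extension by zero are mutually inverse linear maps between the polytopes and their affine spans. The existence of one flow also forces $\a_i=0$ at every vertex outside $V^{\fs}$.

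The step that fails is your treatment of the degenerate case. If $\F_G(\a)=\emptyset$, then $E^{\fs}=\emptyset$. Under the edge-induced reading you adopt, which is also the paper's (see the proof of Lemma~\ref{lem:fs-is-c0}), this gives $V^{\fs}=\emptyset$. But then $\mathbb R^{E^{\fs}}=\mathbb R^0$ is a single point. The empty function vacuously satisfies nonnegativity and every conservation equation, since there are none. So $\F_{G^{\fs}}(\a^{\fs})$ is a point, not empty.

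For instance, take a single edge $1\to 2$ with $\a=(1,1)$. Then $\F_G(\a)=\emptyset$, while $\F_{G^{\fs}}(\a^{\fs})$ is a point, so no bijection exists and the statement is false there. Your own surjectivity step breaks at exactly this point: the zero extension of the empty function is not an $\a$-flow, because $\a\neq 0$.

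So rather than claiming this case is vacuous, state the hypothesis $\F_G(\a)\neq\emptyset$ explicitly; the paper's one-line proof tacitly assumes it. If you want to cover the empty case, the convention that works is the opposite of yours: keep all of $V$ as the vertex set when $E^{\fs}=\emptyset$. Since $\F_G(\a)=\emptyset$ forces $\a\neq 0$ (otherwise the zero flow exists), the edgeless flow polytope on $V$ with netflow $\a$ is then genuinely empty.
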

\begin{proof}
	Immediate from the definitions.
\end{proof}

\begin{lemma}\label{lem:fs-is-c0}
	Every source $i$ of $V^\fs$ has positive netflow $\a^\fs_i>0$ and every sink $j$ of $V^\fs$ has negative netflow $\a^\fs_j<0$.
\end{lemma}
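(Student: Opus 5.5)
Let $i$ be a source of $V^\fs$; the case of a sink $j$ is symmetric. The idea is to evaluate the conservation equation at $i$ on a single flow that puts positive weight on every flow-supporting edge. For each flow-supporting edge $e$ choose an $\a$-flow $F_e$ with $F_e(e)>0$. Averaging gives
\[
	F:=\frac{1}{|E^\fs|}\sum_{e\in E^\fs}F_e,
\]
which lies in $\F_G(\a)$ because the flow polytope is convex. It satisfies $F(e)>0$ for every $e\in E^\fs$. On the other hand, $F(e)=0$ for every edge $e\notin E^\fs$, since any $\a$-flow with positive value on $e$ would make $e$ flow-supporting. Therefore the projection $F^\fs$, which lies in $\F_{G^\fs}(\a^\fs)$ by Lemma~\ref{lem:fs}, carries all of the flow of $F$ and is positive on every edge of $G^\fs$.

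Now evaluate the conservation equation of $F^\fs$ at the source $i$ of $V^\fs$. There are no incoming edges in $G^\fs$, so
\[
	\a^\fs_i=\sum_{e\in\out^\fs(i)}F^\fs(e).
\]
Since $G^\fs$ is the subgraph induced by the flow-supporting edges, every vertex of $V^\fs$ is incident to at least one edge of $E^\fs$. A source has no incoming edges, so $i$ has at least one outgoing edge in $G^\fs$. Each term of the sum is strictly positive, hence $\a^\fs_i>0$.

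For a sink $j$ of $V^\fs$ the same argument applies. The vertex $j$ has at least one incoming edge in $E^\fs$ and no outgoing edges in $E^\fs$, so
\[
	\a^\fs_j=-\sum_{e\in\inn^\fs(j)}F^\fs(e)<0.
\]

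The only step needing care is that the conservation equation in $G$ at $i$ equals the one in $G^\fs$. This holds because edges of $G$ outside $E^\fs$ carry zero flow under $F$, so they contribute nothing to the equation at $i$.
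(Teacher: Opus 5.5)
Your proof is correct and follows the same route as the paper. Conservation at $i$ in $G^\fs$, where non-flow-supporting edges carry zero flow, together with the existence of an outgoing flow-supporting edge at $i$, forces $\a^\fs_i>0$. The averaging step is unnecessary, though: the paper simply takes one $\a$-flow $F$ with $F(e)>0$ on a single edge $e$ leaving $i$ and uses that the remaining outgoing terms are nonnegative.
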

\begin{proof}
	Let $i$ be a source of $V^\fs$. Since $G^\fs$ is an edge-induced subgraph of $G$, there must be at least one edge $e$ incident to $i$; since $i$ is a source, we have $t(e)=i$. 
	By definition of $G^\fs$, the edge $i$ is flow-supporting in $(G,\a)$, hence there exists a flow $F\in\F_\a(G)$ rating $F(e)>0$. Then $F^\fs(e)>0$, forcing $\a^\fs_i>0$.
	A symmetric argument proves that sinks must have negative netflow.
\end{proof}

\begin{lemma}\label{lem:aug-cons}
	The netflow vector $\hat\a^\fs$ is a conservationist netflow vector of $\hat G^\fs$.
\end{lemma}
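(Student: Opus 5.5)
The plan is to read $\hat G^\fs$ as the flow-supporting subgraph of the augmentation $(\hat G,\hat\a)$, with $\hat\a^\fs=\hat\a|_{\hat V^\fs}$, and to check the three conditions of a conservationist netflow vector vertex by vertex. The tools are Lemma~\ref{lem:fs-is-c0} applied to $(\hat G,\hat\a)$, together with the fact that $\hat\a$ takes only three kinds of values: $1$ on $V_X$, negative integers on $V_Y$, and $0$ on $V$.

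\textbf{Sources.} Let $i$ be a source of $\hat G^\fs$. By Lemma~\ref{lem:fs-is-c0}, applied to the pair $(\hat G,\hat\a)$, we have $\hat\a^\fs_i>0$. Since $\hat\a$ is positive only on $V_X$, this forces $i\in V_X$, and hence $\hat\a^\fs_i=1$.

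\textbf{Sinks.} Let $j$ be a sink of $\hat G^\fs$. The symmetric half of Lemma~\ref{lem:fs-is-c0} gives $\hat\a^\fs_j<0$, which is exactly condition (2).

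\textbf{Internal vertices.} This is the only step needing thought. Let $k$ be an internal vertex of $\hat G^\fs$, so it has both an incoming and an outgoing edge in $\hat G^\fs$. I will show that $k\notin V_X\cup V_Y$, which gives $\hat\a^\fs_k=0$.
\begin{itemize}
\item A vertex of $V_X$ has no incoming edges in $\hat G$. By Definition~\ref{defn:cover}(3) it is incident only to its inflow edge, of which it is the tail. So it has no incoming edges in the subgraph either.
\item A vertex of $V_Y$ is incident to exactly one edge of $\hat G$, its outflow edge. So it cannot have both an incoming and an outgoing edge.
\end{itemize}
Hence $k\in V$, and therefore $\hat\a^\fs_k=\hat\a_k=0$.

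\textbf{Orientation of outflow edges.} Definition~\ref{defn:cover}(2) literally says outflow edges run from $V_Y$ to $V$, but the intent (and Lemma~\ref{lem:aug-layer-xy}) is that they point into $V_Y$. The degree-one argument above avoids this issue and works under either reading.

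\textbf{Expected obstacle.} The main care needed is bookkeeping rather than difficulty. The sets of sources and sinks change when we pass to the flow-supporting subgraph, which is why I use Lemma~\ref{lem:fs-is-c0} rather than the sources and sinks of $\hat G$ directly. One should also note the degenerate case in which $\F_{\hat G}(\hat\a)$ is empty: then $\hat G^\fs$ is empty and the statement holds vacuously.
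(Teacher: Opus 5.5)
Your proof is correct and is essentially the paper's argument. The paper shows that the sources of $\hat G^\fs$ are exactly $V_X$ and the sinks exactly $V_Y$, using Lemma~\ref{lem:fs-is-c0} (which its proof miscites as Lemma~\ref{lem:fs}) and the shape of augmentations; this is the same content as your source/sink checks plus your degree argument placing internal vertices in $V$, and your version also covers the empty case cleanly.
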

\begin{proof}
	We will first show that $V_X$ is the set of sources of $\hat G^\fs$. It is clear by definition that every vertex of $V_X$ is a source of $\hat G^\fs$. On the other hand, Lemma~\ref{lem:fs} implies that every source $i$ of $\hat G^\fs$ must have positive netflow $\hat\a_i>0$, hence $i\in V_X$ and $\hat\a_i=1$. Similarly, $V_Y$ is precisely the set of sinks of $\hat G^\fs$. Then $\hat\a^\fs$ must be conservationist because it is positive precisely on the source set $V_X$ and negative precisely on the sink set $V_Y$.
\end{proof}

\begin{lemma}\label{lem:aug-uni}
	If $F\in\F_{\hat G^\fs}(\hat\a^\fs)$, define 
	\begin{align*}
		\check{F}:E&\to\mathbb R_{\geq0} \\
		e&\mapsto
			\begin{cases}
				F(e) & e\in \hat E^\fs \\
				0 & e\not\in \hat E^\fs.
			\end{cases}
	\end{align*}
	Then the map $F\mapsto\check{F}$ is an integral equivalence from $\F_{\hat G^\fs}(\hat\a^\fs)$ to $\F_G(\a)$.
\end{lemma}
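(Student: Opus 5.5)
The plan is to factor the map $F\mapsto\check F$ through the two integral equivalences already established. First I would check that $\check F$ is literally the composite of two maps. Start with $F\in\F_{\hat G^\fs}(\hat\a^\fs)$ and extend it by zero to all of $\hat E$; call the result $\tilde F$. Then project $\tilde F$ to $\mathbb R^E$ as in Lemma~\ref{lem:coverint}. On an edge $e\in E$, this composite returns $F(e)$ if $e\in\hat E^\fs$ and $0$ otherwise, which is exactly the definition of $\check F$.

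The main step is to show that zero-extension $F\mapsto\tilde F$ is an integral equivalence from $\F_{\hat G^\fs}(\hat\a^\fs)$ to $\F_{\hat G}(\hat\a)$. This is the inverse of the projection $(-)^\fs$ of Lemma~\ref{lem:fs} applied to $(\hat G,\hat\a)$. I would first show $\tilde F$ is a $\hat\a$-flow, checking the conservation equation at each vertex $i$ of $\hat G$:
\begin{itemize}
\item If $i\in\hat V^\fs$, the only edges at $i$ carrying nonzero value lie in $\hat E^\fs$. The equation for $\tilde F$ at $i$ is then the equation for $F$ at $i$, with right side $\hat\a^\fs_i=\hat\a_i$.
\item If $i\notin\hat V^\fs$, every edge at $i$ is non-flow-supporting, so the left side is $0$. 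We still need $\hat\a_i=0$. Suppose not. Provided $\F_{\hat G}(\hat\a)\neq\emptyset$, some flow exists, and any flow must have nonzero total on the edges at $i$. Some such edge would then carry positive flow, making it flow-supporting, which is a contradiction.
\end{itemize}
In the empty case, both polytopes $\F_{\hat G}(\hat\a)$ and $\F_G(\a)$ are empty by Lemma~\ref{lem:coverint}. The source polytope $\F_{\hat G^\fs}(\hat\a^\fs)$ is then also empty, since there are no flow-supporting edges and $\hat\a\neq0$ on $V_X$. So the statement is vacuous.

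Given this, $\tilde F^\fs=F$ by construction. Conversely, for any $H\in\F_{\hat G}(\hat\a)$ we have $\widetilde{H^\fs}=H$, because $H$ vanishes on non-flow-supporting edges by definition. So zero-extension is the inverse of the integral equivalence $(-)^\fs$ of Lemma~\ref{lem:fs}, and is itself an integral equivalence.

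Composing with Lemma~\ref{lem:coverint} shows that $F\mapsto\check F$ is a composite of two integral equivalences, hence an integral equivalence. Both maps are affine and their composite is affine, and lattice bijectivity on the affine spans composes.

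The step I expect to need the most care is the vertex check for $i\notin\hat V^\fs$, together with the degenerate case of an empty polytope. Everything else is bookkeeping with the identification of coordinates.
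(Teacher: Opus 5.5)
Your proposal is correct and is exactly the paper's argument: the paper also writes $F\mapsto\check F$ as the inverse of the integral equivalence $(-)^\fs\colon\F_{\hat G}(\hat\a)\to\F_{\hat G^\fs}(\hat\a^\fs)$ of Lemma~\ref{lem:fs} followed by the restriction $(-)|_E$ of Lemma~\ref{lem:coverint}, and your vertex-by-vertex check that zero-extension lands in $\F_{\hat G}(\hat\a)$ just makes explicit what the paper treats as immediate. One correction to your aside on the degenerate case: if $\F_{\hat G}(\hat\a)=\emptyset$ then $\hat E^\fs=\emptyset$, so the edge-induced graph $\hat G^\fs$ has no vertices at all ($V_X\not\subseteq\hat V^\fs$, so your appeal to $\hat\a\neq0$ on $V_X$ does not apply), $\F_{\hat G^\fs}(\hat\a^\fs)$ is then the one-point polytope in $\mathbb R^{\emptyset}$ rather than empty, and the statement is therefore not vacuous but actually fails in this case, which must be excluded by hypothesis as the paper tacitly does (its Lemma~\ref{lem:fs} has the same issue).
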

\begin{proof}
	Observe that the map $F\mapsto\check F$ is the composition of the inverse of the integral equivalence $(-)^\fs:\F_{\hat G}(\hat\a)\to\F_{\hat G^\fs}(\hat\a^\fs)$ (Lemma~\ref{lem:fs}) with the integral equivalence
	$(-)|_E\ :\ \F_{\hat G}(\hat\a)\to\F_G(\a)$
	(Lemma~\ref{lem:coverint}).
\end{proof}

We have finally proven that arbitrary integer flow polytopes may be reduced to the conservationist case, up to integral equivalence.

\begin{cor}\label{cor:everything-is-conservationist}
	Let $G$ be a DAG with arbitrary integer netflow vector $\a$. There exists a DAG $G'$ with conservationist netflow vector $\a'$ such that $\F_G(\a)\ucong\F_{G'}(\a')$.
\end{cor}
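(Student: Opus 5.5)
The plan is to chain the integral equivalences already established: first pass from $(G,\a)$ to an augmentation, and then pass to the flow-supporting subgraph of that augmentation. Concretely, I will take $(G',\a'):=(\hat G^\fs,\hat\a^\fs)$, where $(\hat G,\hat\a)$ is any augmentation of $(G,\a)$. Lemma~\ref{lem:aug-cons} then says that $\hat\a^\fs$ is conservationist on $\hat G^\fs$. Lemma~\ref{lem:aug-uni} says that $F\mapsto\check F$ is an integral equivalence $\F_{\hat G^\fs}(\hat\a^\fs)\to\F_G(\a)$. Together these give the statement.

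The one step that is not a direct citation is the existence of an augmentation, and I would check it directly from Definition~\ref{defn:cover}. For each $i\in V$ with $\a_i>0$, I add $\a_i$ new vertices of $V_X$, each with a single inflow edge into $i$. For each $i$ with $\a_i<0$, I add one new vertex $y_i\in V_Y$ with an edge between $i$ and $y_i$, and set $\hat\a_{y_i}:=\a_i$. The remaining vertices get $\hat\a=0$ on $V$ and $1$ on $V_X$. Conditions (1)--(4) are then immediate.

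For acyclicity, the new vertices are leaves, so no oriented cycle can pass through them, and $\hat G$ is again a DAG. The number of inflow edges is $S_\a$, as required.

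I also need to handle the degenerate case $\F_G(\a)=\emptyset$. Here one can take $G'$ to be any conservationist pair with empty flow polytope, for instance a single source with netflow $1$ and no sink, or simply the construction above. This case is consistent, because every step above is still an integral equivalence of empty polytopes, provided Lemma~\ref{lem:aug-cons} is read with the usual convention that the flow-supporting subgraph is empty.

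I expect the main obstacle to be only a bookkeeping issue in Lemma~\ref{lem:aug-cons}: confirming that after the restriction to flow-supporting edges, the sources are exactly vertices of $V_X$ carrying netflow $1$, and that internal vertices carry netflow $0$. This follows from Lemma~\ref{lem:fs-is-c0} together with the fact that $\hat\a$ vanishes on $V$. Since that lemma is already stated, the corollary follows by composing the two equivalences.
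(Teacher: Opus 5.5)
Your main argument is the paper's own proof. You take any augmentation $(\hat G,\hat\a)$, set $(G',\a'):=(\hat G^\fs,\hat\a^\fs)$, and combine Lemma~\ref{lem:aug-cons} with Lemma~\ref{lem:aug-uni}. Your explicit construction of an augmentation fills in an existence step the paper leaves implicit. All of this is correct whenever $\F_G(\a)\neq\emptyset$.

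Your degenerate-case paragraph, however, is wrong on both counts.

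First, ``simply the construction above'' does not work. If $\F_G(\a)=\emptyset$, then no edge of $\hat G$ is flow-supporting, so $\hat G^\fs$ is the empty graph. Its flow polytope is not empty: it is the single point of $\mathbb R^{\emptyset}$, because the empty function satisfies the (nonexistent) conservation equations. So Lemmas~\ref{lem:fs} and~\ref{lem:aug-uni} genuinely require $\F_G(\a)\neq\emptyset$. The paper's one-line proof passes over this as well, so you were right to split off this case.

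Second, ``a single source with netflow $1$ and no sink'' is not conservationist. Every nonempty finite DAG has a sink, and an isolated vertex is itself a sink, so it would need negative netflow.

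A correct witness is the one-edge DAG $s\to t$ with $\a'_s=1$ and $\a'_t=-2$. It is conservationist, and its flow polytope is empty by Lemma~\ref{lem:netflow_sum_zero}. Since any two empty polytopes are trivially integrally equivalent, this settles the degenerate case.
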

\begin{proof}
	Choose any augmentation $(\hat G,\hat\a)$ of $(G,\a)$ and let $(G',\a'):=(\hat G^\fs,\hat\a^\fs)$. Lemma~\ref{lem:aug-uni} gives the desired integral equivalence.
\end{proof}

Given a framed augmentation $(\hat G,\hat\a,\hat\FF)$ of $(G,\a)$, Lemma~\ref{lem:aug-cons} shows that $(\hat G^\fs,\hat\a^\fs)$ is conservationist. Moreover, by restricting the orders of the framing $\hat\FF$ to $\hat E^\fs$ one obtains a framing $\hat\FF^\fs$ of $(\hat G^\fs,\hat\a^\fs)$.
In fact, the layerings and layering-simplices of the conservationist $(\hat G^\fs,\hat\a^\fs,\hat\FF^\fs)$ correspond naturally to the layerings and layering-simplices of $(\hat G,\hat\a,\hat\FF)$:

\begin{prop}\label{prop:f}
	A set of routes of $\hat G$ is a layering-simplex of the augmentation $(\hat G,\hat\a,\hat\FF)$ (as in Definition~\ref{defn:layering-simplexB}) if and only if it is a layering-simplex of the conservationist $(\hat G^\fs,\hat\a^\fs,\hat\FF^\fs)$ (as in Definition~\ref{defn:layering-simplex}).
	Moreover, for a layering-simplex $\L$ of $(\hat G,\hat\a,\hat\FF)$, the integral equivalence $F\mapsto\check F$ of Lemma~\ref{lem:aug-uni} sends the polyhedral layering-simplex $\D_1(\L)$ to the deaugmented polyhedral layering-simplex $\check{\D}_1(\L)$.
\end{prop}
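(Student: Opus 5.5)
The plan is to show first that layerings of $(\hat G,\hat\a,\hat\FF)$ and of $(\hat G^\fs,\hat\a^\fs,\hat\FF^\fs)$ are literally the same sets of routes. Then we check that the two route-compatibility notions and the two orders $<_\src^+$ agree on the routes that actually appear, so the two definitions of layering-simplex coincide word for word. Finally, the polyhedral statement follows by linearity.

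\textbf{Step 1: the layerings agree.} Let $L$ be a layering of $(\hat G,\hat\a,\hat\FF)$. By Lemma~\ref{lem:aug-layer-xy}, every route of $L$ runs from $V_X$ to $V_Y$. The vector $\hat\J(L)$ is an integer $\hat\a$-flow, so every edge $e$ used by $L$ satisfies $\hat\J(L)(e)\ge 1>0$. Hence $e$ is flow-supporting, and every route of $L$ lies in $\hat G^\fs$. The sources and sinks of $\hat G^\fs$ are exactly $V_X$ and $V_Y$, as shown in the proof of Lemma~\ref{lem:aug-cons}, and $\hat\a^\fs=\hat\a|_{\hat V^\fs}$. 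So the counting conditions of Definition~\ref{defn:layeringB} and Definition~\ref{defn:layering} are identical, and $L$ is a layering of the conservationist tuple. Conversely, a layering of $(\hat G^\fs,\hat\a^\fs,\hat\FF^\fs)$ is a set of routes of $\hat G$ from $V_X$ to $V_Y$. It satisfies the counting conditions at every vertex of $V_X\sqcup V_Y$, and vertices of $\hat G$ outside $\hat G^\fs$ are neither sources nor sinks with nonzero netflow. So it is a layering of the augmentation. Routes of $\hat G$ not lying in $\hat G^\fs$ never appear in any layering, so $\routes(\L)$ for any set $\L$ of layerings consists only of routes of $\hat G^\fs$.

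\textbf{Step 2: compatibility and the source order agree.} Take two routes $p,q$ of $\hat G^\fs$ and a shared vertex $i$. Their maximal common subpaths before and after $i$ are the same in $\hat G$ and in $\hat G^\fs$. The first diverging edges lie in $\hat E^\fs$, and the orders of $\hat\FF^\fs$ are restrictions of those of $\hat\FF$. Hence $<_i^\pm$ compare $p$ and $q$ identically in both settings, and $p,q$ are compatible in one setting iff they are compatible in the other. The same argument at $i=\s_j$ shows that $<_{\s_j}^+$ agrees on the relevant routes. Therefore $<_\src^+$ agrees on layerings, since the source order $<_{\hat\FF,\src}$ on $V_X$ is unchanged. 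The delicate point is that the orders of $\hat\FF^\fs$ at a vertex are restrictions of the orders of $\hat\FF$ to $\hat E^\fs$. We also need the incoming/outgoing orders at vertices of $V$ in $\hat G$ to be exactly the ones used by Definition~\ref{defn:framing} for $\hat G^\fs$. Vertices of $V$ are internal in $\hat G^\fs$ by Step 1's identification of sources and sinks, so all needed orders exist.

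\textbf{Step 3: conclusion.} Conditions (1) and (2) of Definitions~\ref{defn:layering-simplexB} and~\ref{defn:layering-simplex} are built only from route-cliques, the set of all layerings, and $<_\src^+$, and Steps 1 and 2 identify each of these. So the layering-simplices coincide. For the polyhedral statement, if $L$ is a layering then the indicator vector of $L$ in $\mathbb R^{\hat E^\fs}$ extends by zero to $\hat\J(L)$. Applying $F\mapsto\check F$, which extends by zero and then restricts to $E$, gives $\sum_{p\in L}\check\I(p)=\check\J(L)$. This map is affine (indeed linear), so it carries the convex hull $\D_1(\L)$ onto $\mathrm{conv}\{\check\J(L)\}=\check\D_1(\L)$.

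I expect the main obstacle to be the bookkeeping in Step 1. The key claim there is that every edge of a route of a layering is flow-supporting, and that no vertex of $V$ becomes a source or sink of $\hat G^\fs$ in a way that changes the counting conditions. Both follow from $\hat\J(L)$ being a genuine $\hat\a$-flow together with Lemma~\ref{lem:aug-cons}.
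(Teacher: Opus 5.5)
Your proposal is correct and follows the paper's approach. The one substantive point in both is that every route appearing in a layering uses only flow-supporting edges, because indicator vectors of layerings are $\hat\a$-flows; you argue this with each $\hat\J(L)$ separately, while the paper uses the sum over all of $\L$. Everything else (matching the layerings, route compatibility and $<_\src^+$, and the polyhedral statement via $\check{\I(p)}=\check\I(p)$ and linearity) is bookkeeping that the paper calls immediate and that you simply write out in more detail.
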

\begin{proof}
	It is immediate that a set of routes of $\hat G^\fs$ forms a layering-simplex of the conservationist $(\hat G^\fs,\hat\a^\fs,\hat\FF^\fs)$ if and only if it forms a layering-simplex of $(\hat G,\hat\a,\hat\FF)$. To show the first statement of the proposition, it remains only to show that a layering-simplex $\L$ of $(\hat G,\hat\a,\hat\FF)$ must be a set of routes of $\hat G^\fs$ (i.e., it cannot contain a route of $\hat G^\fs$ using an edge which is not flow-supporting).
	This is immediate upon verifying that
	\[
		\sum_{L\in\L}\sum_{p\in L}\I(p)
	\]
	is an $\hat\a$-flow of $\hat G$, hence all edges used by a route of $L$ are flow-supporting.
	The final statement of the proposition follows upon checking that $\check{\I(p)}=\check{\I}(p)$ for a route $p$ of $\hat G$.
\end{proof}

We are finally able to prove the main result of this section.

\begin{proof}[{Proof of Theorem~\ref{thm:main-aug}}]
	Proposition~\ref{prop:f} shows that the integral equivalence $F\mapsto\check F$ from $\F_{\hat G^\fs}(\hat\a^\fs)$ to $\F_G(\a)$ sends the framing triangulation $\mathcal T(\hat G^\fs,\hat\a^\fs,\hat\FF^\fs)$ (as defined in Corollary~\ref{cor:triangulation}) to $\mathcal T(\hat G,\hat\a,\hat\FF)$ as given in Theorem~\ref{thm:main-aug}, hence the latter must itself be a unimodular triangulation.
\end{proof}

\section{Well-ordered Framed Augmentations}
\label{sec:well-ordered}

In Example~\ref{ex:k33} we observed a framed conservationist DAG with two troubling properties: its layering-simplex complex is not a flag complex, and the dual graph of its framing triangulation is not the Hasse diagram of a poset.
In this section we isolate a subclass of ``well-ordered'' framed augmentations whose layering-simplex complex may be verified by a simple pairwise compatibility condition similar to the unit case of Danilov, Karzanov, and Koshevoy.
{We will show that all theories of framing triangulations currently existing in the literature (namely, the unit flow polytope case of Danilov, Karzanov, and Koshevoy~\cite{DKK}, the single-negative-netflow case of Gonz\'alez D'Le\'on, Hanusa, and Yip~\cite{DHY}, and the strongly planar case of~\cite{PLAN}) are realized as well-ordered framed augmentations. We posit that several key properties of these settings may extend to the well-ordered case, e.g., some theory of framing posets.}

\subsection{Results about the well-ordered case}

We first define well-ordered framed augmentations.

\begin{defn}
	Let $G$ be a DAG with netflow vector $\a$ and let $(\hat G,\hat\a,\hat\FF)$ be a framed augmentation of $(G,\a)$. Let $L$ be a layering of $(\hat G,\hat\a,\hat\FF)$. Let $\Phi_L$ be the map from $V_X$ to $V_Y$ which sends a vertex $s_i\in V_X$ to $t(L_i)$, where $L_i$ is the route of $L$ beginning at $s_i$.
	We say that $(\hat G,\hat\a,\hat\FF)$ is \emph{well-ordered} if for any two layerings $L$ and $M$, we have $\Phi_L=\Phi_M$.
\end{defn}

It is immediate that any augmentation with a unique outflow edge is well-ordered, as every $\Phi_L$ will send every vertex of $V_X$ to the unique vertex of $V_Y$. The planar-framed augmentation on the right of Figure~\ref{fig:bigaug} has two outflow edges but is still well-ordered: its 9 layerings are drawn in Figure~\ref{fig:bigex} labelling the corresponding integer point of its flow polytope, and one may check that $\Phi_L(s_1)=t_1$ and $\Phi_L(s_2)=t_2$ for all layerings $L$.
On the other hand, every framed augmentation of the complete bipartite graph $(K_{3,3},\a)$ of Example~\ref{ex:k33} fails to be well-ordered because its layerings will correspond to perfect matchings of $K_{3,3}$, no two of which give the same map from sources to sinks (i.e., $\Phi_L\neq\Phi_M$ for any choice of distinct layerings $L$ and $M$).

In the well-ordered case, layering-simplices may be verified using a straightforward pairwise compatibility condition.

\begin{defn}
	Let $(\hat G,\hat\a,\hat\FF)$ be a well-ordered framed augmentation of $(G,\a)$. Let $L$ and $M$ be layerings of $(\hat G,\hat\a,\hat\FF)$. We say that $L$ and $M$ are \emph{noncrossing} if $\routes(\{L,M\})$ is a route-clique and without loss of generality, $L_i\leq_{\s_i}^+ M_i$ for all $i\in S_\a$.
	A \emph{layering-clique} is a set of pairwise noncrossing layerings.
\end{defn}

\begin{prop}\label{prop:ltr}
	A set of layerings of a well-ordered framed augmentation $(\hat G,\hat\a,\hat\FF)$ is a layering-clique if and only if it is a layering-simplex.
\end{prop}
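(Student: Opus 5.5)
The plan is to reduce both conditions to a statement about \emph{componentwise} comparison of layerings, using well-orderedness to show that layerings built from a fixed set of routes form a product. Throughout, write $\Phi:=\Phi_L$ for the common map $V_X\to V_Y$ (if there are no layerings the statement is vacuous). I use that the routes of a single layering are pairwise compatible. This is implicit in Lemma~\ref{lem:layering-int-flow}, where layerings are identified with the route-cliques $\R_F$, and it is also forced by Definition~\ref{defn:layering-simplexB}~(1) applied to the singleton $\{L\}$.

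\textbf{Key lemma (product structure).} Let $\R$ be a set of routes, each of which appears in some layering. Let $\R_i\subseteq\R$ be the routes starting at $\s_i$. By well-orderedness every $p\in\R_i$ ends at $\Phi(\s_i)$. Now take any choice function $i\mapsto p_i\in\R_i$.
\begin{enumerate}
\item The set $\{p_1,\dots,p_{S_\a}\}$ has exactly one route starting at each $\s_i$.
\item The number of its routes ending at an outflow vertex $j$ is $|\Phi^{-1}(j)|$.
\item Evaluating any fixed layering shows $|\Phi^{-1}(j)|=|\hat\a_j|$.
\end{enumerate}
Hence $\{p_1,\dots,p_{S_\a}\}$ is a layering, and the layerings with $\routes(\{L\})\subseteq\R$ are exactly these choice functions. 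Since $<_\src^+$ is lexicographic (from the largest index) in the total orders $<_{\s_i}^+$, the $<_\src^+$-minimum of this set is the layering $L$ with $L_i=\min_{<_{\s_i}^+}\R_i$ for every $i$. The main point needing care is checking that a choice function really is a layering, including sink multiplicities. This is exactly where well-orderedness is used, and the step fails for $K_{3,3}$.

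\textbf{Layering-simplex $\Rightarrow$ layering-clique.} Let $\L=\{L^1<_\src^+\dots<_\src^+L^m\}$ be a layering-simplex. Condition (1) of Definition~\ref{defn:layering-simplexB} gives that $\routes(\{L^a,L^b\})\subseteq\routes(\L)$ is a route-clique for every $a,b$. Apply the key lemma to $\R=\routes(\{L^a,\dots,L^m\})$. Condition (2) then says $L^a_i=\min_{<_{\s_i}^+}\{L^b_i:b\ge a\}$ for every $i$. Hence $L^a_i\le_{\s_i}^+L^b_i$ for all $i$ whenever $a<b$, so every pair is noncrossing.

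\textbf{Layering-clique $\Rightarrow$ layering-simplex.} Let $\L$ be pairwise noncrossing. Compatibility of routes is a pairwise condition, and any two routes of $\routes(\L)$ lie in $\routes(\{L,M\})$ for some $L,M\in\L$. Therefore $\routes(\L)$ is a route-clique, which is condition (1).

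For condition (2), pairwise noncrossing means the relation ``$L_i\le_{\s_i}^+M_i$ for all $i$'' is a total order on $\L$. If $L$ precedes $M$ componentwise and $L\ne M$, the largest index $i$ with $L_i\ne M_i$ has $L_i<_{\s_i}^+M_i$, so $L<_\src^+M$. Thus the $<_\src^+$ indexing of $\L$ agrees with the componentwise order, and $L^a_i=\min_{<_{\s_i}^+}\{L^b_i:b\ge a\}$ for each $i$. By the key lemma, applied with $\R=\routes(\{L^a,\dots,L^m\})$, this componentwise minimum is the $<_\src^+$-minimal layering using those routes. That is exactly condition (2).
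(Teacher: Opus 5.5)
Your proof is correct and is essentially the paper's argument. The paper proves simplex $\Rightarrow$ clique by swapping the single coordinate $L^i_j$ for $L^{i'}_j$, which is a special case of your choice-function lemma; well-orderedness is what makes that swap a layering, though the paper leaves this implicit. It proves clique $\Rightarrow$ simplex through the same componentwise-minimum observation, and you also make explicit why the $<_\src^+$ indexing agrees with the componentwise order. One small point: since you take layerings to be route-cliques, your key lemma should also assume $\R$ is a route-clique, which does hold in both places you apply it.
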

\begin{proof}
	Let $\L$ be a set of layerings of a well-ordered famed augmentation $(\hat G,\hat\a,\hat\FF)$ of $(G,\a)$.
	Index $\L=\{L^1,\dots,L^m\}$ so that $L^1<_\src^+\dots<_\src^+L^m$. 

	First, suppose that $\L$ is a layering-clique.
	Then for any $j\in[S_\a]$ and $i_1<i_2\in[m-1]$ we have $L^{i_1}_j\leq_{\s_j}^+L^{i_2}_i$. We will show that this set satisfies the two conditions of Definition~\ref{defn:layering-simplexB}. Condition (1) is directly assumed.

	We now show (2). Take $i\in[m]$. Because $L^i,\dots,L^{i+1}$ are pairwise noncrossing, for every $j\in[S_\a]$ the route $L^i_j$ is the $<_{s_j}^+$-minimal route of $\routes(\{L^i,\dots,L^{i+1}\})$ beginning at $s_i$. It follows that
	\[L^i=\min_{<_\src^+}\left\{L\in\text{Layerings}(\hat G,\hat\a,\hat\FF)\ :\ \routes(\{L\})\subseteq\routes(\{L^i,\dots,L^m\})\right\}\]
	so we have shown (2).
	This completes the proof that $\L$ is a layering-simplex in this case.

	To prove the converse, we will assume that $\L$ is a layering-simplex and show that the layerings of $\L$ are pairwise noncrossing.
	To this end, take two elements $i<i'$ of $[m]$.
	We will show that $L^i$ and $L^{i'}$ are noncrossing by showing that $L^i_j\leq_{s_j}L^{i'}_{j}$ for every index $j\in[S_\a]$.
	Suppose to the contrary that there exists $j\in[S_\a]$ such that $L^i_j>_{s_j}L^{i'}_j$. Then the layering $L'$ obtained by replacing $L^i_j$ with $L^{i'}_j$ satisfies $\routes(\{L'\})\subseteq\routes(\{L^i,\dots,L^m\})$ and $L'<_\src^+ L$, contradicting Definition~\ref{defn:layering-simplexB}~(2). This completes the proof that any two layerings of $\L$ are noncrossing, hence that $\L$ is a layering-clique.
\end{proof}

It follows immediately from Proposition~\ref{prop:ltr} that the simplicial complex of layering-simplices in the well-ordered case is flag.

\begin{remk}\label{remk:JJ}
	Note that when $(\hat G,\hat\a,\hat\FF)$ is well-ordered, neither compatibility of routes nor the characterization of layering-simplices given in Proposition~\ref{prop:ltr} uses the order $<_{\FF,\src}$. Hence, in the well-ordered case this order never affects the framing triangulation. Outside of the well-ordered case, this order $<_{\FF,\src}$ will be necessary to define a triangulation (see, e.g., Example~\ref{ex:k33}, which is symmetric except for the order $<_{\FF,\src}$).
\end{remk}

We now give two examples of framed augmentations which are well-ordered.

\begin{example}\label{ex:augpolyg}
	Observe that the framed augmentation of Figure~\ref{fig:augpoly} (or Example~\ref{ex:augpoly}) is well-ordered, and hence the layering-simplex complex may be verified using the pairwise compatibility condition of Proposition~\ref{prop:ltr} rather than the layering-simplex definition of Definition~\ref{defn:layering-simplexB}.
\end{example}

\begin{example}\label{ex:2s1t}
	Let $(G,\a)$ be the DAG on the top-left of Figure~\ref{fig:2s1t} with netflow vector $(1,1,-2)$ as labelled in blue. A framed augmentation $(\hat G,\hat\a,\hat\FF)$ is shown below, with the inflow edges and vertices in green and orders of $\hat\FF$ induced from the planar embedding. The total order on $V_X$ is not impactful by Remark~\ref{remk:JJ}.
	The flow polytope $\F_G(\a)$ is integrally equivalent to a $2\times1$ rectangle, shown on the right with the vertices indexed by the corresponding layering. Vertices whose layerings are compatible are connected by a solid or dotted line, drawing the framing triangulation.

	For example, the layerings $L:=\{x_1\alpha_2\beta_2y_1,x_2\beta_1y_1\}$ and $M:=\{x_1\alpha_1\beta_1y_1,x_2\alpha_2y_1\}$ fail to be compatible because they are not noncrossing:
	\[
		L_{1}=x_1\alpha_2\beta_2y_1>_{s_1}^+x_2\alpha_1\beta_1y_1=M_1\text{ but }
		L_2=x_2\alpha_1y_1<_{s_2}^+x_2\alpha_2y_1=M_2.
	\]
	On the other hand, $L:=\{x_1\alpha_2\beta_2y_1,x_2\beta_2y_1\}$ and $M:=\{x_1\alpha_2\beta_1y_1,x_2\beta_1y_2\}$ are noncrossing with $L>_\src^+ M$, but they are not compatible because the routes $M_1$ and $L_2$ are incompatible.

	\begin{figure}
		\centering
		\def\svgscale{.38}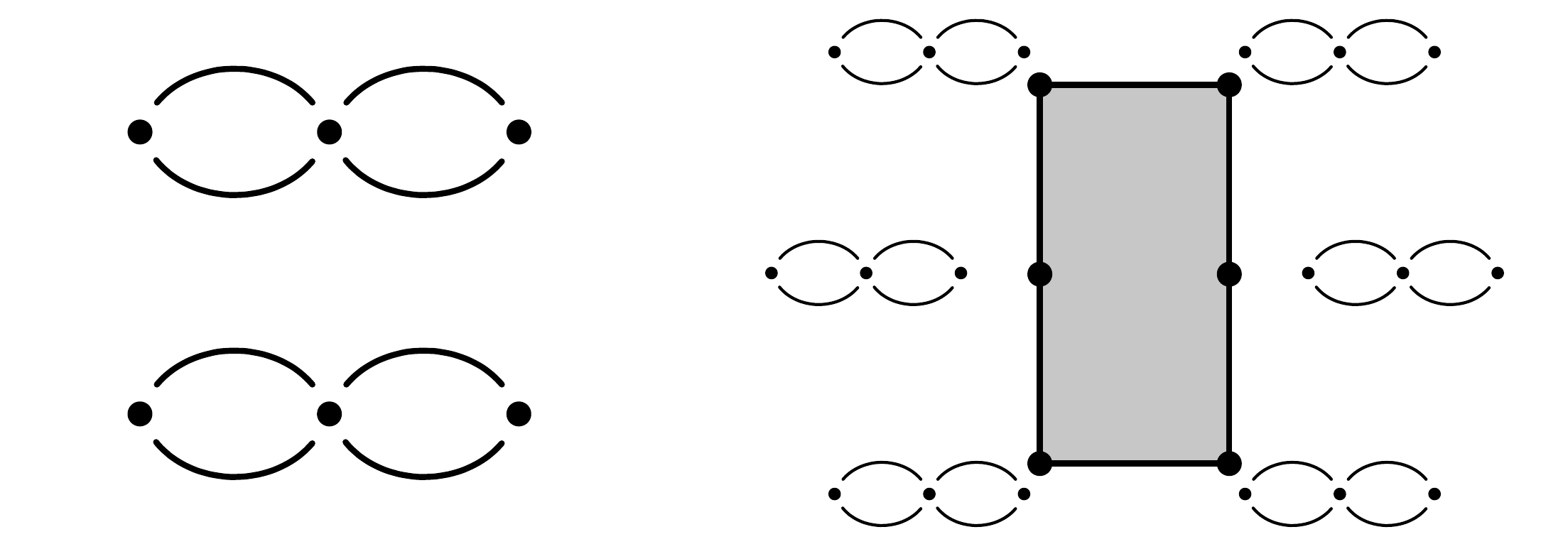
		\caption{A tuple $(G,\a)$, a well-ordered framed augmentation $(\hat G,\hat\a,\hat\FF)$, and its framing-triangulated flow polytope.}
		\label{fig:2s1t}
	\end{figure}
\end{example}

\begin{example}\label{ex:planar}
	Let $(G,\a)$ be as drawn on the left of Figure~\ref{fig:bigaug}, with $\a=(1,1,0,-1,-1)$. The right of the figure shows a framed augmentation $(\hat G,\hat\a,\hat\FF)$. Figure~\ref{fig:bigex} shows the three-dimensional flow polytope $\F_G(\a)$ on the bottom with its integer points labelled by layerings of $(\hat G,\hat\a,\hat\FF)$. The 8 maximal layering-simplices are drawn on the top of Figure~\ref{fig:bigex}, with the bottom one highlighted in teal along with its corresponding polyhedral simplex of the framing triangulation. The maximal layering-simplices are arranged as the dual graph of the framing triangulation -- i.e., with edges connecting neighboring maximal simplices in the framing triangulation.
	\begin{figure}
		\centering
		\def\svgscale{.38}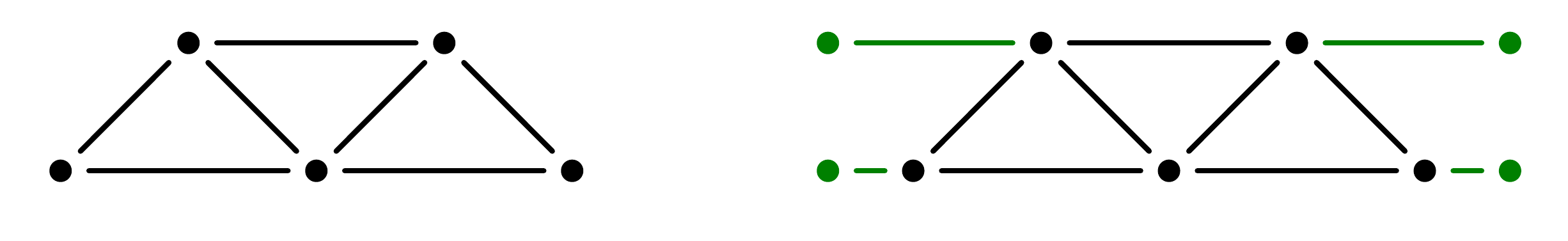
		\caption{A strongly planar embedding $(\G,\a)$ (left) with a strongly planar balanced augmentation (right).}
		\label{fig:bigaug}
	\end{figure}
	\begin{figure}
		\centering
		\def\svgscale{.30}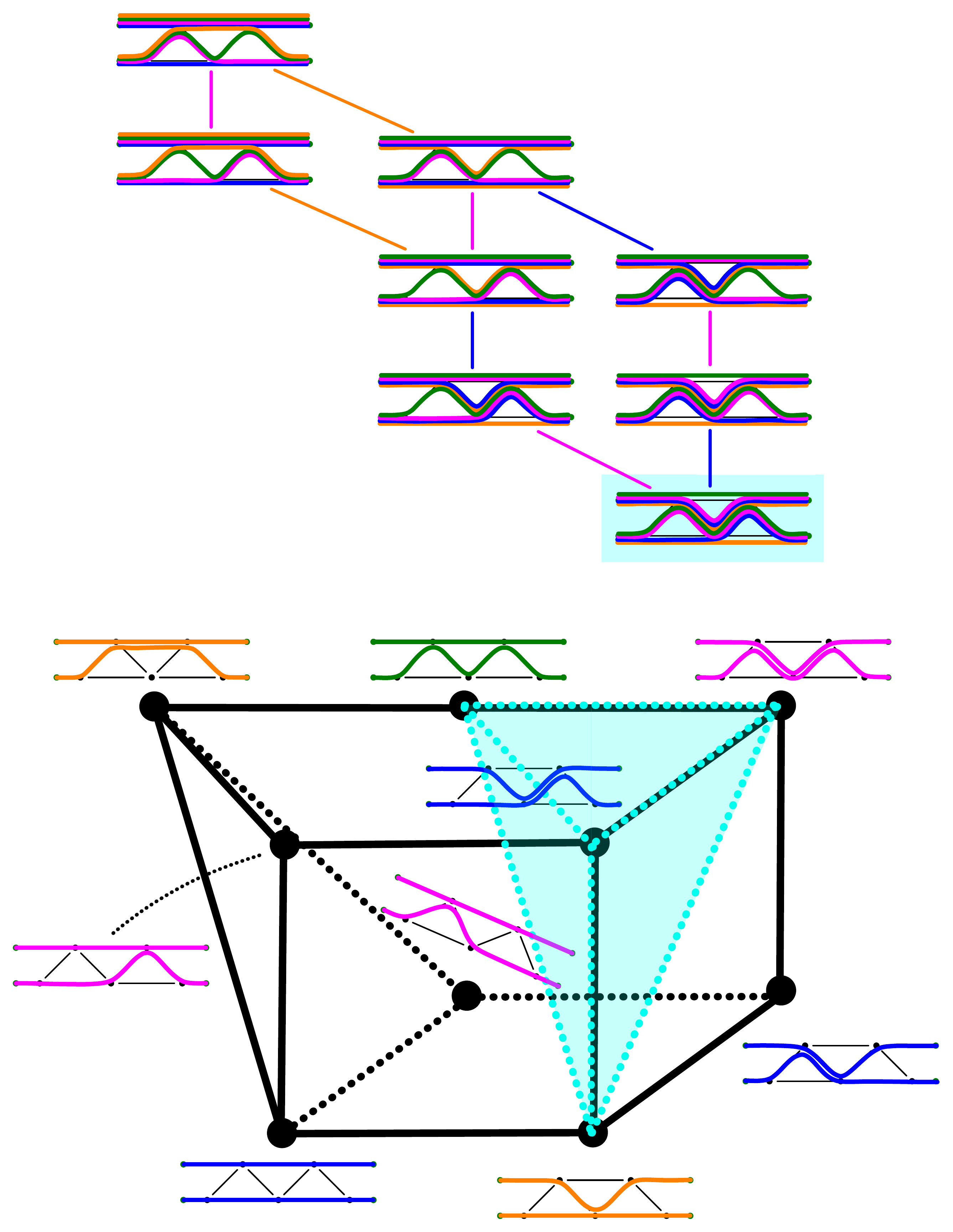
		\caption{The flow polytope $\F_G(\a)$ of the DAG of Figure~\ref{fig:bigaug} and the dual graph of its framing triangulation represented via maximal layering-simplices.}
		\label{fig:bigex}
	\end{figure}
\end{example}

\section{Relation to Existing Theories of Framing Triangulations}
\label{sec:special-cases}

We now quickly relate the framing triangulations introduced in this paper with the existing theories of framing triangulations in the literature defined for special classes of DAGs with netflow vectors. We begin by arguing that our theory matches that of Danilov, Karzanov, and Koshevoy~\cite{DKK} in the case when $G$ has one source and one sink and $\a$ is the unit netflow vector. {We then show more generally that it matches the theory of Gonz\'alez D'Le\'on, Hanusa, and Yip~\cite{DHY} when $G$ is arbitrary and $\a$ has a unique negative entry. Finally, we show that it matches the author's previous work~\cite{PLAN} when the framing $\FF$ is induced by a strongly planar embedding of $(G,\a)$.}

\subsection{DKK-framing triangulations}

If $G$ is a DAG with one source and one sink and $\a=(1,0,\dots,0,-1)$ is the unit netflow vector, then it is immediate that $\a$ is conservationist.
Moreover, a DKK-framing $\FF_{\dkk}$ gives rise to a framing $\FF_{\con}$ in the sense of Definition~\ref{defn:framing} by arbitrarily choosing a total order of the edges incident to the source and letting $<_{\FF,\src}$ be the trivial order on the set consisting of the unique source vertex.

It is immediate that two routes $p$ and $q$ of $G$ are compatible with respect to $\FF_\dkk$ if and only if they are compatible with respect to $\FF_\con$. Moreover, a layering $L$ of $(G,\a,\FF_\con)$ is merely a set consisting of a single route $L=\{p\}$, and $\J(L)=\I(p)$. 
Finally, for any set $\L$ of distinct layerings of $(G,\a)$ it is immediate that Definition~\ref{defn:layering-simplex}~(2) is satisfied, hence the only substantive condition for $\L$ to be a layering-simplex is that the underlying set of routes is pairwise compatible. This shows that the DKK-framing triangulation induced by $\FF_\dkk$ is equal to the framing triangulation of $(G,\a,\FF_\con)$, so our conservationist framing triangulations generalize those of Danilov, Karzanov, and Koshevoy~\cite{DKK}.

We remark again (see Remark~\ref{remk:DKK}) that the order of the framing $\FF_\con$ of the outgoing edges to the source vertex has no effect on compatibility of routes, hence does not affect the framing triangulation. The order $<_{\FF_\con,\src}$ is only relevant in Condition~(2) of Definition~\ref{defn:layering-simplex}, which becomes nontrivial when layerings have cardinality greater than one.

\subsection{{Framing triangulations when $\a$ has a unique negative element}}

As a consequence of a theory developed in a concurrent work~\cite{DHY}, Gonz\'alez D'Le\'on, Hanusa, and Yip give a theory of framing triangulations and framing posets in the setting where $G$ is arbitrary and the netflow vector $\a$ has a unique negative entry.
For example, the pair $(G,\a)$ on the left of Figure~\ref{fig:aug} has a unique vertex with negative netflow (see also the framing triangulation given in Figure~\ref{fig:augpoly}).

They define the \emph{augmented graph}~\cite[Definition 5.1]{DHY} of such a pair $(G,\a)$, which is the same as our augmentation $(\hat G,\hat\a)$ of $(G,\a)$ containing exactly one outflow edge from the vertex of $G$ with negative netflow (with the caveat that they consider inflow edges to be ``half-edges'' with no tail and outflow edges to be ``half-edges'' with no head).

They then define framings on augmented graphs.
The only difference between their notion of framings and ours is that we consider the order $<_{\FF,\src}$ to be a part of the framing $\FF$ which may be independently varied with the orders $<_{\FF,\hinn(i)}$, while in~\cite{DHY} they use the order on the vertex set $[n]$ and the orders $<_{\FF,\hinn(i)}$ to induce a total order on the inflow half-edges. This does not restrict the class of triangulations arising as framing triangulations in the well-ordered setting; see Remark~\ref{remk:JJ}.
Note that when $\a$ has a unique negative entry, the framed augmentation $(\hat G,\hat\a,\hat\FF)$ with a unique outflow vertex $t$ (i.e., the one corresponding to the augmented graph in the sense of~\cite{DHY}) is automatically well-ordered since $\Phi_L(s_i)=t$ for all $i\in S_\a$.

They define \emph{route-matchings}~\cite[Definition 5.3]{DHY}, which are the same as our layerings. Route-matchings form \emph{cliques}~\cite[Definition 5.6]{DHY}, which are the same as our layering-simplices using the characterization given in Proposition~\ref{prop:ltr}.
In this way, their framing triangulation result~\cite[Proposition 6.3]{DHY} is the same as our framing triangulation result~\ref{cor:triangulation} in the setting where $\a$ has a unique negative entry.

The work~\cite{DHY} is expansive and their framing triangulation result is connected with many other combinatorial objects; moreover, they give connections to the Lidskii volume formulas of Baldoni and Vergne. It would be interesting to see if some of this theory may generalize to the well-ordered setting.

\subsection{Framing triangulations in the strongly planar setting}

In~\cite{PLAN}, the author developed a theory of framing triangulations and framing posets for a notion of \emph{strongly planar} DAGs which may have multiple sources and multiple sinks.

\begin{defn}[{\cite[Definition 4.1]{PLAN}}]
	\label{defn:splanar}
	A planar embedding $\G$ of a nondegenerate DAG $G$ is \emph{strongly planar} if
	\begin{enumerate}
		\item for every directed edge $e:i\to j$ of $\G$, the x-coordinate of $i$ is strictly less than the x-coordinate of $j$,
		\item each edge $(i,j)$ is embedded into the plane as the graph of a piecewise differentiable function of $x$,
		\item every source and every sink of $\G$ is incident to the exterior face of the planar embedding $\G$, and
		\item every source vertex of $\G$ has the same x-coordinate, and every sink vertex of $\G$ has the same x-coordinate.
	\end{enumerate}
\end{defn}

For example, the embedding $\G$ of the DAG of Figure~\ref{fig:bigaug} is strongly planar.

\begin{remk}
	Definition~\ref{defn:splanar} was inspired by the strongly planar DAGs with one source and one sink whose planar-framed DKK triangulations were shown by M\'esz\'aros, Morales, and Striker~\cite{MMS} to be integrally equivalent to Stanley's triangulation of the order polytope of a corresponding poset.
\end{remk}

\begin{defn}
	Let $G$ be a DAG with strongly planar embedding $\G$ and netflow vector $\a$. 
	Suppose further that $\a$ is positive on sources, negative on sinks, and zero on internal vertices.
	We say that $(\G,\a)$ is \emph{strongly planar} if every vertex $i\in V$ with a nonzero netflow $a_i\neq0$ is incident to the exterior face of the planar embedding of $\G$.
\end{defn}

The tuple $(\G,\a)$ of Figure~\ref{fig:bigaug} is strongly planar.

Given a strongly planar $(\G,\a)$, choose the augmentation $(\hat G,\hat\a)$ of $(G,\a)$ such that each sink vertex $i\in V$ is incident to $|\a_i|$ outflow edges. Then $\hat G$ has $|E_X|=S_\a$ source vertices and $|E_Y|=S_\a$ sink vertices, making it \emph{balanced} in the sense of~\cite{PLAN}, meaning that it has the same number of sources and sinks. Moreover, one easily extends the strongly planar embedding $\G$ of $G$ to a strongly planar embedding $\hat\G$ of $\hat G$ (e.g., the right of Figure~\ref{fig:bigaug}).
Let $s_1,\dots,s_{S_\a}$ be the sources of $\hat G$ ordered bottom to top in the embedding $\hat\G$, and let $t_1,\dots,t_{S_\a}$ be the sinks ordered bottom to top.

Define a framing $\hat\FF_\G$ on the augmentation $(\hat G,\hat\a)$ by letting each $<_{\hat\FF_\G,\inn(i)}$ order the incoming edges to $i$ bottom to top with respect to the embedding $\hat\G$, and similarly letting each $<_{\hat\FF_\G,\hout(i)}$ order bottom to top. The total order $<_{\hat\FF_\G,\src}$ of the vertices $V_X$ will not affect the framing triangulation by Remark~\ref{remk:JJ}; regardless one may order the source vertices bottom to top with respect to the embedding $\G$.

It is then immediate that a layering of the strongly planar balanced DAG $(\hat\G,\hat\a)$ in the sense of~\cite{PLAN} is precisely a layering of the framed augmentation $(\hat G,\hat \a,\hat\FF_\G)$.
Then~\cite[Proposition 5.13]{PLAN} states that for any layering $L$, the route $p\in L$ beginning at $s_i$ must end at $t_i$ for every $i\in[S_\a]$, hence $(\hat G,\hat\a,\hat\FF_\G)$ is well-ordered.
The notion of layering-simplices defined in~\cite[Definition 5.16]{PLAN} is the same as the characterization of layering-simplices shown in Proposition~\ref{prop:ltr} in the well-balanced case, hence the framing triangulation result~\cite[Corollary 5.19]{PLAN} is a special case of Corollary~\ref{cor:triangulation}.
See Figure~\ref{fig:bigex} for an example.

The work~\cite{PLAN} goes on to give the structure of a \emph{framing poset} to the maximal simplices of the framing triangulation in the strongly planar case, which agrees with the framing lattices of von Bell and Ceballos~\cite{vBC} when $(G,\a)$ is strongly planar with one source and one sink and $\a$ is the unit netflow vector. It would be interesting to see if a theory of framing posets may be developed more generally for the well-ordered case.

\begin{conj}
	If $(\hat G,\hat\a,\hat\FF)$ is a well-ordered framed augmentation of a DAG $G$ with netflow vector $\a$, then the dual graph of its framing triangulation is the Hasse diagram of a poset.
\end{conj}

\bibliographystyle{alphaurl}
\bibliography{biblio} 

\end{document}